
\documentclass[12pt, english]{article}
\usepackage{geometry}   
\geometry{letterpaper}  
\usepackage{amssymb}
\usepackage{amsmath}
\usepackage{amsfonts}
\usepackage{latexsym}
\usepackage{graphicx}
\usepackage{amsthm,enumerate,exscale,stmaryrd}
\usepackage{cases}
\usepackage{bm}
\usepackage[pdftex,bookmarks,colorlinks,breaklinks=true]{hyperref}
\usepackage{color}
\input xy
\xyoption{all}
\usepackage{dsfont}
\usepackage{yfonts}
\usepackage{bbm}
\usepackage{leftidx}
\usepackage{upgreek}
\usepackage[all]{xy}
\usepackage{verbatim}
\usepackage[multiple]{footmisc}
\usepackage{charter}
%

\NoComputerModernTips
\newdir^{ (}{{}*!/-3pt/\dir^{(}}
\newdir_{ (}{{}*!/-3pt/\dir_{(}}

%

\newcommand{\Id}{\mathop{\rm Id}\nolimits}

\newcommand{\xqed}[1]{%
  \leavevmode\unskip\penalty9999 \hbox{}\nobreak\hfill
  \quad\hbox{\ensuremath{#1}}}

\newcommand{\ovset}[2]{\overset{#1}{#2}}
\newcommand{\unset}[2]{\underset{#1}{#2}}
\def\Ab{\mathop{\rm \mathfrak{Ab}}\nolimits}
\def\Scheme{\mathop{\rm \mathfrak{Sch}}\nolimits}

\def\Alt{\mathop{\rm Alt}\nolimits}

\def\Sym{\mathop{\rm Sym}\nolimits}
\def\Mult{\mathop{\rm Mult}\nolimits}

\def\Hom{\mathop{\rm Hom}\nolimits}

\def\Spec{\mathop{\bf Spec}\nolimits}

\def\kernel{\mathop{\rm Ker}\nolimits}
\def\cokernel{\mathop{\rm Coker}\nolimits}
\def\image{\mathop{\rm Im}\nolimits}

\def\innHom{\underline{\Hom}}

\def\innMult{\underline{\Mult}}
\def\innSym{\underline{\Sym}}
\def\innAlt{\underline{\Alt}}
\let\phi\varphi
\let\epsilon\varepsilon

%

\hfuzz=3pt
\overfullrule=10pt                   

\setlength{\baselineskip}{13.0pt}    

%

\newtheorem{thm}[equation]{Theorem}

\newtheorem{cor}[equation]{Corollary}

\newtheorem{lem}[equation]{Lemma}

\newtheorem{prop}[equation]{Proposition}

\theoremstyle{definition}
\newtheorem{dfn}[equation]{Definition}                
\newtheorem{ex}[equation]{Example}              
\newtheorem{rem}[equation]{Remark}

\newcounter{example}
\renewcommand{\theexample}{\arabic{example}}

\numberwithin{equation}{section}

%




%

\newcommand{\BF}{{\mathbb{F}}}
\newcommand{\BG}{{\mathbb{G}}}

%

\def\longto{\longrightarrow}

\def\into{\hookrightarrow}
\let\onto\twoheadrightarrow
\def\isoto{\arrover{\cong}}

\newbox\mybox
\def\arrover#1{\mathrel{
       \setbox\mybox=\hbox spread 1em
              {\hfil$\scriptstyle#1\vphantom{g}$\hfil}
       \vbox{\offinterlineskip\copy\mybox
             \hbox to\wd\mybox{\rightarrowfill}}}}             
\def\invlim{\mathop{\vtop{\hbox{\rm lim}\vskip-8pt
        \hbox{\hskip1pt$\scriptstyle\longleftarrow$}\vskip-1pt}}}

\def\ontoover#1{\mathrel{
       \setbox\mybox=\hbox spread 1.4em{\hfil$\scriptstyle#1$\hfil}
       \vbox{\offinterlineskip\copy\mybox
             \hbox to\wd\mybox{\rightarrowfill\hskip-2.8mm
                               $\rightarrow$}}}}
\def\longonto{\ontoover{\ }}
\def\to{\rightarrow}
\def\xrarr{\xrightarrow}


\hyphenation{morph-ism}
\hyphenation{homo-morph-ism}
\hyphenation{iso-morph-ism}
\hyphenation{mono-morph-ism}
\hyphenation{epi-morph-ism}
\hyphenation{endo-morph-ism}
\hyphenation{auto-morph-ism}
\hyphenation{morph-isms}
\hyphenation{homo-morph-isms}
\hyphenation{iso-morph-isms}
\hyphenation{mono-morph-isms}
\hyphenation{epi-morph-isms}
\hyphenation{endo-morph-isms}
\hyphenation{auto-morph-isms}
\hyphenation{semi-abelian}
\hyphenation{non-commuta-tive}


\title{Tensor Operations on Group Schemes}
\author{S. Mohammad Hadi Hedayatzadeh\footnote{Department of Mathematics, Institute for Research in Fundamental Sciences (IPM), Tehran, Iran \href{mailto:hadi@ipm.ir}{hadi@ipm.ir}}}

\date{}

\begin{document}

\maketitle
\abstract{In this paper we study multilinear morphisms between commutative group
schemes and the associated tensor constructions. We will also do some explicit
calculations and give examples that show that this theory behaves in
a way that one would naturally expect.}
\tableofcontents

\parindent0pt
\overfullrule=0pt

\section{Introduction}

When we are studying
homomorphisms of commutative group schemes, we are naturally led to
look at multilinear morphisms between them, because on the one hand
they are obvious generalizations of homomorphisms and on the other
hand they make it possible to have a group scheme version of
multilinear algebra. Although some of the results of multilinear
algebra are no longer valid in this new setting, there are many
similarities between these two theories, as we will see. Let
$G_1,\dots,G_n$ and $H$ be commutative group schemes over a base
scheme $S$. A multilinear morphism $\phi:G_1\times\dots\times G_n\to
H$ is a morphism of schemes over $S$ that is linear in each $G_i$.
The group of all such multilinear morphisms is denoted by
$\Mult(G_1\times\dots\times G_n,H)$. Natural examples of multilinear
morphisms are the Weil pairings on the torsion groups of an abelian
variety, and the pairing $G\times G^*\to\BG_m$ between a
finite and flat commutative group scheme $G$ and its Cartier dual $G^*$.\\

In the first section we study groups of multilinear morphisms and
related concepts. We define the so-called inner $\Hom$ of two
commutative group schemes $G$ and $H$, denoted by $\innHom(G,H)$, as
being the group scheme representing the functor
$$\Scheme_S\longto\Ab,\qquad T\mapsto \Hom(G_T,H_T)$$
from the category of schemes over $S$ to the category of abelian
groups. Theorem 3.10 in \cite{Pink1} states that this group scheme
exists whenever the group $G$ is finite and flat over $S$ and is
affine (or of finite type) if $H$ is affine (or of finite type). We
show that this construction commutes with the base change, i.e.,
$\innHom(G,H)_T\cong\innHom(G_T,H_T)$ for any $S$-scheme $T$ and
that the functors $\innHom(-,H)$ and $\innHom(G,-)$ from the
category of commutative group schemes over a base field to itself
are left exact, which is not very surprising, since these functors
are constructed from left exact functors $\Hom(-,H_T)$ and
$\Hom(G_T,-)$ by varying $T$.

It turns out that the group scheme $\innHom(G,H)$ need not be flat
(or finite) even if both $G$ and $H$ are flat (or finite). We show
this by giving one example in each case.\\

We can generalize the definition of inner $\Hom$ as follows. Define
$\innMult(G_1\times\dots\times G_n,H)$ to be the group scheme
representing the functor
$$\Scheme_S\longto\Ab,\qquad T\mapsto \Mult(G_{1,T}\times\dots\times G_{n,T},H_T).$$

The conditions under which this group exists are identical to those
for\\ $\innHom(G,H)$, i.e., flatness and finiteness of $G_i$. It is
affine or of finite type if $H$ has these properties.\\

Then we study the group of multilinear morphisms
$\Mult(G_1\times\dots\times G_n,H)$. Consider the case where
$G:=G_1=\dots=G_n$ and write $G^n$ for the product of $n$ copies of
$G$. The group $\Mult(G^n,H)$ has two distinguished subgroups,
namely, the group $\Sym(G^n,H)$ of symmetric multilinear morphisms
and the group $\Alt(G^n,H)$ of alternating multilinear morphisms.
The first one is the group of multilinear morphisms that are
invariant under the obvious action of the symmetric group $S_n$ on
$G^n$ and the second one the group of multilinear morphisms that
vanish when two factors are equal.

In the same way that we construct $\innMult(G_1\times\dots\times
G_n,H)$ from $\Mult(G_1\times\dots\times G_n,H)$, we can
``schematize'' the groups $\Sym(G^n,H)$ and $\Alt(G^n,H)$ and obtain
$\innSym(G^n,H)$ and $\innAlt(G^n,H)$, in order to take into account
the behavior of these groups over different base schemes.\\

In section 1, we establish the following propositions, which show
that our definitions lead to a coherent theory.\\

\textbf{Proposition \ref{prop2}.} \emph{Let
$G_1,\,\dotsc,\,G_r,\,H_1,\,\dotsc,\,H_s,\,F$ be commutative group\\
schemes over a base scheme $S$. We have a natural isomorphism
$$\Mult(G_1\times\dotsb\times G_r,\innMult(H_1\times\dotsb\times
H_s,F))\cong$$ $$\Mult(G_1\times\dotsb\times G_r\times
H_1\times\dotsb\times H_s,F)$$ functorial in all arguments.}\\

In particular we have $\Mult(F\times G,H)\cong\Hom(F,\innHom(G,H))$.
We could therefore take this isomorphism for the definition of
$\innHom(G,H)$, i.e., $\innHom(G,H)$ is the unique group scheme such
that we have a natural isomorphism $$\Mult(-\times
G,H)\cong\Hom(-,\innHom(G,H)).$$ It also shows how naturally
multilinear morphisms arise when one is looking at the homomorphisms
between group schemes. The next
important result is a generalization of Proposition \ref{prop2}:\\

\textbf{Proposition \ref{prop3}.} \emph{Let
$G_1,\,\dotsc,\,G_r,\,H_1,\,\dotsc,\,H_s,\,F$ be commutative group\\
schemes over a base scheme $S$. We have a natural isomorphism
$$\innMult(G_1\times\dotsb\times G_r,\innMult(H_1\times\dotsb\times
H_s,F))\cong$$ $$\innMult(G_1\times\dotsb\times G_r\times
H_1\times\dotsb\times H_s,F)$$ functorial in all arguments.}\\

Then, we give some concrete examples and we show the following
isomorphisms, where the base scheme is $\Spec k$ with $k$ a field of
characteristic $p$ and $\alpha_{p^n}$ denotes the kernel of the
$n^{\text{th}}$ Frobenius of the additive group $\BG_a$ over $k$,
i.e., $\alpha_{p^n}(R)=\{\,a\in R\,|\,a^{p^n}=0\,\}$ for any
$k$-algebra $R$:

\begin{itemize}
\item $\innMult(\alpha_p^n,\BG_m)\cong \BG_a\quad \forall\, n\geq 2.$
\item $\innMult(\alpha_{p^{n_1}}\times\dots\times\alpha_{p^{n_r}},\BG_a)\cong\BG_a^{n_1\dots n_r}.$
\item $\innHom(\alpha_{p^n},\alpha_{p^m})\cong\BG_a^n\quad \text{if}\ m\geq n.$
\item
$\innHom(\alpha_{p^n},\alpha_{p^m})\cong\alpha_{p^m}^{n-m}\times\BG_a^m\quad\text{if}\
m<n.$
\item
$\innSym(\alpha_{p^n}^r,\BG_a)\cong\BG_a^{\binom{n+r-1}{r-1}}.$
\item $\innAlt(\alpha_{p^n}^r,\BG_a)\cong\BG_a^{\binom{n}{r}}\quad
\text{if}\ p>2\quad  (\text{where}\,\binom{n}{r}=0\ \text{if}\
r>n).$
\end{itemize}

In section 2, we make the ``dual'' constructions of the first
section. A multilinear morphism from $G_1\times\dots\times G_n$ to a
commutative group scheme is not a homomorphism of group schemes and
therefore is not a morphism in the category of group schemes, but we
would like to work inside this category. Thus, we should somehow
look at these multilinear morphisms inside this category, that is,
we should replace $G_1\times\dots\times G_n$ by a commutative group
scheme such that for any commutative group scheme $H$ and any
multilinear morphism from the product $G_1\times\dots\times G_n$ to
$H$, there is a unique homomorphism from this new commutative group
scheme to $H$, that satisfies a certain universal property. This is
possible thanks to the tensor product of $G_1,\dots,G_n$. Let the
tensor product of commutative group schemes $G_1,\dots,G_n$ over $S$
be a commutative group scheme $G_1\otimes\dots\otimes G_n$ together
with a ``universal'' multilinear morphism $\phi:G_1\times\dots\times
G_n\to G_1\otimes\dots\otimes G_n$ that yields an isomorphism
$$\Hom(G_1\otimes\dots\otimes G_n,H)\cong \Mult(G_1\times\dots\times
G_n,H),\qquad \psi\mapsto \psi\circ\phi$$ for all commutative group
schemes $H$ over $S$. This universal property determines the tensor
product up to unique isomorphism, if it exists. Theorem 4.3 in
\cite{Pink1} says that the tensor product exists and is pro-finite
if the base scheme $S$ is the spectrum of a field and the $G_i$ are
finite over $S$, and with the notations of the first section it is
isomorphic to the inverse limit $\invlim G_{\alpha}^*$ where
$G_{\alpha}$ runs through all finite subgroup schemes of
$\innMult(G_1\times\dots\times G_n,\BG_m)$. By abuse of notation, we
can write this inverse limit as $\innMult(G_1\times\dots\times
G_n,\BG_m)^*$. This shows that all information about the tensor
product $G_1\otimes\dots\otimes G_n$ and hence about multilinear
morphisms from $G_1\times\dots\times G_n$ to commutative group
schemes can be read off from the group of multilinear morphisms
$\Mult(G_{1,T}\times\dots\times G_{n,T},\BG_{m,T})$ for all
extensions $T\to S$ of the base scheme.

Despite our expectations, the construction of the tensor product
does not commute with the base change, that is, we don't have in general
$(G\otimes H)_T\cong G_T\otimes H_T$ for an $S$-scheme $T$. This makes the calculations more difficult.\\

In a similar fashion, we define the symmetric power $S^nG$,
respectively the alternating power $\Lambda^nG$, of a commutative
group scheme $G$ over $S$ to be the unique commutative group schemes
that characterize, in the same way as the tensor product, the group
$\Sym(G^n,H)$, respectively $\Alt(G^n,H)$ for all commutative group
schemes $H$ over $S$. Again, if $S$ is the spectrum of a field and
$G$ is finite over $S$, these group schemes exist, are pro-finite
and constructed as quotients of the $n$-fold tensor product
$G^{\otimes n}$, similar to the same constructions
for modules over commutative rings.\\

Then we do some explicit calculations and show the following
isomorphisms for $n>1$, where $\BG_a^*:=\invlim_iG_i^*$ and $G_i$
runs through all finite subgroup schemes of $\BG_a$:
\begin{itemize}
\item $S^n\alpha_p\cong\alpha_p^{\otimes
n}\cong\BG_a^*$
\item $\Lambda^n\alpha_p\cong\alpha_p^{\otimes
n}$ if $p=2.$
\item $\Lambda^n\alpha_p=0$ if $p>2$.
\end{itemize}

And more generally:

\begin{itemize}
\item $\innSym(\alpha_p^n,H)=\innMult(\alpha_p^n,H)$.
\item $\innAlt(\alpha_p^n,H)=\innMult(\alpha_p^n,H)$ if $p=2$.
\item $\innAlt(\alpha_p^n,H)=0$ if $p>2$.
\end{itemize}

For the remainder of section 2 we work on alternating multilinear
morphisms and the alternating powers. Our main results are:\\

\textbf{Theorem \ref{lem7}.} \emph{Assume that $0\to G'\arrover{\iota}
G\arrover{\pi} G''\to 0$ is a short exact sequence of commutative
group schemes. Let $m$ be a non negative integer and write
$m=m'+m''$ with non negative integers $m'$ and $m''$. Consider the
diagram
$$\xymatrix{
\Alt(G^m,H)\ar[r]^{\rho\qquad}&\Alt(G'^{m'}\times G^{m''},H)\\
&\Alt(G'^{m'}\times G''^{m''},H)\ar@{^{ (}->}[u]_{\pi^*}}$$ where
$\rho$ is the restriction map.
\begin{itemize}
\item[\emph{(a)}] If $\Lambda^{m''+1}G''=0$, then $\rho$ is injective.
\item[\emph{(b)}] If $\Lambda^{m'+1}G'=0$, then $\rho$ factors through $\pi^*$.
\item[\emph{(c)}] If both conditions hold, then there is a natural epimorphism
$$\zeta:\Lambda^{m'}G'\otimes\Lambda^{m''}G''\onto\Lambda^mG.$$
\item[\emph{(d)}] If furthermore the sequence is split, then the epimorphism
$\zeta$ is an isomorphism.
\end{itemize}}

We know that this theorem is true for modules of finite length over
local rings. The condition $\Lambda^mM=0$ for a such module $M$ is
guaranteed if $m$ is greater than the length of $M$. One would
desire that the same thing holds for commutative group schemes.
Restricting to local-local commutative group schemes over a base
field of odd characteristic $p$, we show:\\

\textbf{Proposition \ref{prop7}.} \emph{Let $G$ be a local-local
commutative group scheme of order $p^n$ with $p$ an odd prime
number. We have:
\begin{itemize}
\item[\emph{(a)}] $\Lambda^m G=0$ for all $m>n$.
\item[\emph{(b)}] $\Lambda^n G$ is a quotient  of $\alpha_p^{\otimes n}$.
\end{itemize}}

We see that for a local-local commutative group scheme of order
$p^n$ for an odd prime number $p$, the exponent $n$ plays somehow
the role of the length of modules of finite length.
Another example that shows this analogy is:\\

\textbf{Corollary \ref{cor1}.} \emph{Let $G$ and $H$ be local-local
commutative group schemes of order $p^n$ and $p^m$ respectively,
with $p$ an odd prime number. Then we have a natural isomorphism
$$\Lambda^{n+m}(G\oplus H)\cong\Lambda^nG\otimes\Lambda^mH.$$}

Finally, we have the following important result:\\

\textbf{Proposition \ref{prop12}.} \emph{Let the base scheme be
$\Spec k$ for a perfect field $k$ of odd characteristic $p$, and $n$
a positive integer. Then there is an isomorphism
$$\Lambda^n\alpha_{p^n}\cong\alpha_p.$$}

The results proved in this paper may hold in a more general context
(see Remark \ref{rem14}), however, we do not intend to state them
with minimal hypotheses.\\

\textbf{Acknowledgements.} The ideas and concepts of the multilinear
theory of commutative group schemes used here are due to Prof. Dr.
Pink who first introduced them, and in this paper we further develop
these ideas. I would like to acknowledge the help of these people in
the course of writing this paper: Thanks are due to Prof. Dr.
Testerman for reading and commenting an earlier version of this
paper, and to Prof. Dr. Bayer and Prof. Dr. Ojanguren for their
availability and
suggestions. I am specially thankful to Prof. Dr. Pink for suggesting the subject of this work and for his helpfulness and advice.\\

\textbf{Conventions.} We suppose some familiarity with the
elementary theory of schemes and group schemes. Throughout the
paper, all schemes are assumed to be separated and quasi-compact. We
usually consider schemes over a fixed base scheme $S$, and in this
case morphisms and fiber products are taken over $S$, unless
otherwise noted. The pullback of a scheme $X$ over $S$ via any
morphism $T\to S$ is denoted by $X_T$. When there is no ambiguity,
we write $\BG_a, \BG_m$ or $\alpha_{p^n}$
instead of $\BG_{a,S}, \BG_{m,S}$ or $\alpha_{p^n,S}$.\\

\section{Inner homs and multilinear morphisms}

\begin{dfn}
\label{dfn1}
Let $G$ and $H$ be commutative group schemes over a
base scheme $S$. Define a contravariant functor $\innHom(G,H)$ from
the category of $S$-schemes to the category of abelian groups as
follows:
$$T\mapsto \innHom(G,H)(T):=\Hom_T(G_T,H_T).$$
If this functor is representable by a group scheme over $S$, that
group scheme is also denoted by $\innHom(G,H)$ and is called the
\emph{inner $\Hom$ from $G$ to $H$.}\xqed{\blacktriangle}
\end{dfn}

\begin{rem}
\label{rem1}
According to Theorem 3.10 in \cite{Pink1}, if $G$
is finite and flat over $S$, then $\innHom(G,H)$ is representable
and if in addition $H$ is affine, resp. of finite type over $S$,
then $\innHom(G,H)$ has the same property. So, in order to assure
the existence of $\innHom(G,H)$, in the sequel, every time we write
$\innHom(G,H)$, we assume that $G$ is finite and flat over the base
scheme, without explicitly mentioning it.\xqed{\lozenge}
\end{rem}

\begin{prop}
\label{prop1}
 Let $H$ be an affine commutative group scheme over a
field $k$. Then the functors $\innHom(-,H)$ and $\innHom(H,-)$ from
the category of affine commutative group schemes over $k$ to itself
are left exact.
\end{prop}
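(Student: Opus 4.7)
The plan is to reduce left exactness of these group-scheme-valued functors to the standard left exactness of the set-valued $\Hom$-functors $\Hom_T(-,H_T)$ and $\Hom_T(H_T,-)$ on the category of commutative group schemes over $T$, by passing to $T$-valued points. Since by construction $\innHom(G,H)(T)=\Hom_T(G_T,H_T)$, a sequence of group schemes of the form $\innHom(-,H)$ is exact (in the left-exact sense, i.e.\ the leftmost term is the kernel of the next map) if and only if it is exact on $T$-points for every $k$-scheme $T$; this is just the Yoneda lemma applied to kernels, which commute with representable functors.

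Concretely, for $\innHom(-,H)$, take a short exact sequence
$$0\to G'\to G\to G''\to 0$$
of (finite flat) commutative group schemes over $k$. For every $k$-scheme $T$, base change along $T\to\Spec k$ preserves this short exact sequence, because $k$ is a field and hence base change on Hopf algebras is simply $-\otimes_k\CO_T$, which is exact. Applying $\Hom_T(-,H_T)$ to the resulting short exact sequence $0\to G'_T\to G_T\to G''_T\to 0$ of commutative group schemes over $T$ produces an exact sequence of abelian groups
$$0\to\Hom_T(G''_T,H_T)\to\Hom_T(G_T,H_T)\to\Hom_T(G'_T,H_T)$$
by the usual left exactness of $\Hom$ in its contravariant argument. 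The identifications are functorial in $T$, so the sequence
$$0\to\innHom(G'',H)\to\innHom(G,H)\to\innHom(G',H)$$
is exact as a sequence of $\Ab$-valued functors, hence as a sequence of group schemes; by Remark \ref{rem1} each term is affine since $H$ is, so we land in the stated target category.

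The covariant case $\innHom(H,-)$ is entirely parallel: a short exact sequence $0\to F'\to F\to F''\to 0$ of affine commutative group schemes over $k$ base-changes to a short exact sequence over every $T$, and $\Hom_T(H_T,-)$ is left exact in its target variable, yielding the desired left exactness after the same Yoneda translation. The only genuine point of friction is keeping track of which objects must be finite flat for $\innHom$ to exist (the first argument, which for the second functor is $H$ itself, so $H$ is implicitly finite flat throughout this discussion); once that is noted, no input is required beyond exactness of base change over a field and the left exactness of ordinary $\Hom$ in abelian sheaf categories.
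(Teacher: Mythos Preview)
Your proof is correct and follows essentially the same strategy as the paper: reduce left exactness of $\innHom(-,H)$ and $\innHom(H,-)$ to left exactness of the ordinary $\Hom$ groups on $T$-points, using that base change over a field preserves short exact sequences of affine group schemes. The paper's proof differs only in that it spells out the base-change step in more detail, verifying explicitly via Hopf algebras and augmentation ideals that $N_R$ is the kernel of $G_R\to Q_R$, whereas you invoke flatness of $\CO_T$ over $k$ more summarily; the logical content is the same.
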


\begin{proof}[\textsc{Proof}. ]
Suppose that $0\to N\arrover{i}G\arrover{\pi}Q\to 0$ is a short
exact sequence of affine group schemes over a field $k$ and denote
by $A, B$ the Hopf algebras representing $G, Q$ and by $I_B$ the
augmentation ideal of $B$. Then the Hopf algebra representing $N$ is
$A/(I_B\cdot A)$. Let $R$ be a $k$-algebra. Since it is flat over
$k$, we have an injection $B\otimes_kR\into A\otimes_kR$ and
therefore $G_R\arrover{\pi_R}Q_R$ is a quotient morphism. We have
also that $(I_B\cdot A)\otimes_kR=(I_B\otimes_kR)\cdot
(A\otimes_kR)$ and so by flatness we have $(A/(I_B\cdot
A))\otimes_kR\cong A\otimes_kR/((I_B\cdot
A)\otimes_kR)=A\otimes_kR/(I_B\otimes_kR)(A\otimes_kR)$. It implies
that $N_R$ is the kernel of $G_R\arrover{\pi_R}Q_R$. Consequently
the short sequence $0\to N_R\arrover{i_R}G_R\arrover{\pi_R}Q_R\to 0$
is exact. Now, fix an affine commutative group scheme $H$. We show
that the sequence
$$0\to\innHom(Q,H)\arrover{\pi^*}\innHom(G,H)\arrover{i^*}\innHom(N,H)$$
is exact. It is equivalent to the exactness of the sequence
$$0\to\innHom(Q,H)(R)\arrover{\pi^*}\innHom(G,H)(R)\arrover{i^*}\innHom(N,H)(R)$$
for every $k$-algebra $R$, i.e., the exactness of the sequence
$$0\to\Hom_R(Q_R,H_R)\arrover{\pi^*}\Hom_R(G_R,H_R)\arrover{i^*}\Hom_R(N_R,H_R).$$
Assume we have shown that for any homomorphism $\phi:G_R\to H_R$
such that $\phi\circ i_R=0$, then there exists a unique homomorphism
$\psi:Q_R\to H_R$ with $\phi=\psi\circ\pi_R$, i.e., the following
diagram is commutative
$$\xymatrix{
0\ar[r]&N_R\ar[r]^{i_R}\ar[dr]^0&G_R\ar[r]^{\pi_R}\ar[d]^{\phi}&Q_R\ar[r]\ar[dl]^{\exists
!\ \psi}&0\\
&&H_R.&&}$$ Then the exactness is clear; indeed, pick a morphism
$f:Q_R\to H_R$ with $f\circ\pi_R=0$ then putting $\phi:=0$ the zero
morphism, there are two morphisms $Q_R\to H_R$, namely $f$ and the
zero morphism, whose composition with $\pi_R$ are $\phi$ and from
the assumption they should be equal. This shows the injectivity of
$$\Hom_R(Q_R,H_R)\arrover{\pi^*_R}\Hom_R(G_R,H_R).$$ Clearly we have
$\image \pi^*_R\subset \kernel i^*_R$. Let $g:G_R\to H_R$ be an
element of $\kernel i^*_R$, i.e., $g\circ i^*_R=0$, then according
to the assumption there is a $\psi:Q_R\to H_R$ with
$\pi_R\circ\psi=g$, or in other words $g=\pi^*_R(\psi)$ and thus
$\kernel i^*_R\subset\image \pi^*_R$.\\

It is thus sufficient to show that the assumption holds. But this is
obvious, since as we proved above, the morphism
$G_R\arrover{\pi_R}Q_R$ is the cokernel of the injection $N_R\into
G_R$ in the category of affine commutative group schemes.

Similarly, the fact that $N_R\arrover{i_R}G_R$ is the kernel of the
quotient morphism $G_R\arrover{\pi_R}Q_r$ implies that given any
homomorphism $\phi:H_R\to G_R$ with trivial composition
$\pi_R\circ\phi$ there is a unique homomorphism $\psi:H_R\to N_R$
such that the following diagram is commutative
$$\xymatrix{
0\ar[r]&N_R\ar[r]^{i_R}&G_R\ar[r]^{\pi_R}&Q_R\ar[r]&0\\
&&H_R\ar[ul]^{\exists!\ \psi}\ar[u]^{\phi}\ar[ur]^0.&&}$$ And this
implies as above the exactness of the following short sequence
$$0\to
\Hom_R(H_R,N_R)\arrover{i^*_R}\Hom_R(H_R,G_R)\arrover{\pi^*_r}\Hom_R(H_R,Q_R)$$for
every $k$-algebra $R$, and consequently the following sequence of
group schemes is exact
$$0\to\innHom(H,N)\arrover{i^*}\innHom(H,G)\arrover{\pi^*}\innHom(H,Q).$$

\end{proof}

A natural question that one may ask is to know to what extent $\innHom(G,H)$ shares the properties of $G$ and $H$.
Examples of such properties are finiteness or flatness. It is quite
easy to see that $\innHom(\alpha_p,\alpha_p)\cong\BG_a$ (and we will
give a detailed proof later), so we observe that despite the
finiteness of $\alpha_p$, the group scheme
$\innHom(\alpha_p,\alpha_p)$ is not finite and thus, this property
is not preserved by the construction of inner $\Hom$. In the
following example, we show that in fact, the flatness also has this
''defect'' and is not preserved by this construction.

\begin{ex}
\label{ex1}Here we give an example of finite flat commutative
group schemes over a $\Lambda$-algebra $R$ such that the group
scheme $\innHom (G,H)$ is not flat. We refer the reader to the paper
\cite{OT} for a discussion of group schemes of prime order, their
classification and the definition of $\Lambda$. We know that the
field $\BF_p$ is canonically a $\Lambda$-algebra and therefore any
$\BF_p$-algebra is canonically a $\Lambda$-algebra. Put
$R:=\BF_p[x]$, the polynomial ring in one variable over the field
$\BF_p$. Then any elements $a,\,b\in R$ satisfying $ab=0$ define a
group scheme $G_{a,b}:=\Spec R[y]/(y^p-ay)$ together with the
comultiplication
$$y\mapsto y\otimes1+1\otimes y+b(1-p)^{-1}\cdot
\sum_{i=1}^{p-1}\frac{y^i}{w_i}\otimes\frac{y^{p-i}}{w_{p-i}}$$ and
according to Proposition 3.11 in \cite{Pink1}, if $c,\,d\in R$ are
such that $cd=0$, we have $$ \innHom (G_{a,b},G_{c,d})\cong \Spec
R[y]/(ay^p-cy,dy^p-by)$$ with the comultiplication $$y\mapsto
y\otimes1+1\otimes
y+ad(1-p)^{-1}\cdot\sum_{i=1}^{p-1}\frac{y^i}{w_i}\otimes\frac{y^{p-i}}{w_{p-i}}.$$
The group scheme $G_{0, x}$, represented by the Hopf algebra
$R[y]/(y^p)$, is flat over $R$, because this Hopf algebra is a
torsion-free module over the principal ideal domain $R$ and so is
flat over $R$. But the group scheme $\innHom (G_{0,x},G_{0,x})$ is
represented by the Hopf algebra $R[y]/(xy^p-xy)$ which has the
torsion element $y^p-y$ (which is annihilated by $x$) and therefore
is not flat over $R$. It follows then that $\innHom
(G_{0,x},G_{0,x})$ is not flat over $R$.\xqed{\blacksquare}
\end{ex}

Recall that if $G_1,\dotsc,G_r,H$ are commutative group schemes over
a base $S$, then $\Mult(G_1\times\dotsb\times G_r,H)$ is the group
of all multilinear morphisms from $G_1\times\dotsb\times G_r$ to
$H$, i.e. morphisms that are linear in each factor $G_i$ or
equivalently morphisms which have the property that for any
$S$-scheme $T$ the induced morphism $G_1(T)\times\dotsb\times
G_r(T)\to H(T)$ is multilinear. We can then generalize the
definition of inner $\Hom$ as follows:

\begin{dfn}
\label{dfn2}
Let $G_1, G_2,\dotsc, G_r, H$ be commutative group
schemes over a base scheme $S$. Define a contravariant functor from
the category of $S$-schemes to the category of abelian groups as
follows:
$$T \mapsto\innMult(G_1\times G_2\times\dotsb\times G_r,
H)(T):=\Mult_T(G_{1,T}\times G_{2,T}\times\dotsb\times G_{r,T},
H_T).$$ If this functor is representable by a group scheme over $S$,
we will also denote that group scheme by $\innMult(G_1\times
G_2\times\dotsb\times G_r, H)$.\xqed{\blacktriangle}
\end{dfn}

For any positive integer $r$ we denote by $G^r$ the product of $r$
copies of $G$, and for any $1\leq i,j\leq r$ we let
$\Delta^r_{ij}\subset G^r$ or $\Delta^r_{ij}G$ (if we want to make
explicit the group scheme $G$) denote the closed subscheme defined
by equating the $i^{\text{th}}$ and $j^{\text{th}}$ components.

\begin{dfn}
\label{dfn3}
Let $G$ and $H$ be as above.
\begin{itemize}
\item[\emph{(i)}] A multilinear morphism $G^r\to H$ is called \emph{symmetric}
if it is invariant under permutation of the factors. The group of
all such symmetric multilinear morphisms is denoted $\Sym(G^r,H)$.

\item[\emph{(ii)}] A multilinear morphism $G^r\to H$ is called
\emph{alternating} if its restriction to $\Delta^r_{ij}$ is trivial
for all $1\leq i,j\leq r$. The group of all such alternating
multilinear morphisms is denoted $\Alt(G^r,H)$.\xqed{\blacktriangle}
\end{itemize}
\end{dfn}

\begin{rem}
\label{rem2}
\begin{itemize}
\item[1)] Let $\phi:G^r\to H$ be a multilinear morphism. Then one can see easily that $\phi$ is symmetric if and only if
the induced morphism $\phi(T):G^r(T)=G(T)^r\to H(T)$ is symmetric
for all $S$-schemes $T$.
\item[2)] We have a natural action of the symmetric group $S_r$ on
$G^r$. This action induces an action on the group
$\Mult(G_1\times\dotsb\times G_r,H)$ and the subgroup $\Sym(G^r,H)$
is precisely the subgroup of fixed points, i.e.
$\Sym(G^r,H)=\Mult(G^r,H)^{S_r}$.
\item[3)] Similarly to 1), if $\psi:G^r\to H$ is a multilinear morphism, then
$\psi$ is alternating if and only if $\psi(T):G^R(T)=G(T)^r\to H(T)$
is alternating for all $S$-schemes $T$.
\item[4)] The usual calculation shows that any alternating morphism is
antisymmetric, i.e. a permutation of the factors multiplies the
morphism by the sign of the permutation.\xqed{\lozenge}
\end{itemize}
\end{rem}

We can make definitions similar to Definition \ref{dfn2} for the
group of symmetric and alternating multilinear morphisms:

\begin{dfn}
\label{dfn4}
Let $G, H$ be commutative group schemes over $S$. Then
denote by $\innSym(G^r,H)$ and $\innAlt(G^r,H)$ respectively the
contravariant functors
$$T \mapsto \innSym(G^r,H)(T):=\Sym_T(G_T^r,H_T)$$ and
$$T\mapsto \innAlt(G^r,H)(T):=\Alt_T(G_T^r,H_T)$$ respectively. If
$\innSym(G^r,H)$ resp. $\innAlt(G^r,H)$, is representable by a
commutative group scheme, we will also denote this group scheme by
$\innSym(G^r,H)$ resp. $\innAlt(G^r,H)$.
\xqed{\blacktriangle}\end{dfn}

We are now going to prove a general proposition on multilinear
morphisms which will be used throughout the paper, but we first
establish two lemmas:

\begin{lem}
\label{lem1}
If $\innHom(G,H)$ is representable, there is a natural
isomorphism
$$\Mult(G_1\times\dotsb\times
G_r,\innHom(G,H))\cong\Mult(G_1\times\dotsb\times G_r\times G,H),$$
functorial in all arguments.
\end{lem}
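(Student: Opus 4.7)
The plan is to establish the isomorphism functorially on $T$-points and then invoke Yoneda. For every $S$-scheme $T$ I would define mutually inverse maps
\[
\Mult_T(G_{1,T}\times\dotsb\times G_{r,T},\innHom(G,H)_T)\;\longleftrightarrow\;\Mult_T(G_{1,T}\times\dotsb\times G_{r,T}\times G_T,H_T),
\]
show they are natural in $T$ and in all of $G_1,\dots,G_r,G,H$, and then pass to global multilinear morphisms. To make sense of the left-hand side I first need that the formation of $\innHom$ commutes with base change, i.e.\ $\innHom(G,H)_T\cong\innHom(G_T,H_T)$; this is recorded in the introduction and follows from the universal property. Thus, after reducing to the case $T=S$, it suffices to exhibit a natural bijection
\[
\Mult(G_1\times\dotsb\times G_r,\innHom(G,H))\;\cong\;\Mult(G_1\times\dotsb\times G_r\times G,H).
\]

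To construct the forward map, take a multilinear morphism $\phi:G_1\times\dotsb\times G_r\to\innHom(G,H)$. For every $S$-scheme $T$ and every $(g_1,\dots,g_r)\in G_1(T)\times\dotsb\times G_r(T)$, the value $\phi(T)(g_1,\dots,g_r)$ is by definition a homomorphism $G_T\to H_T$. Evaluating this homomorphism on a $T$-point $g\in G(T)$ yields an element $\tilde\phi(T)(g_1,\dots,g_r,g)\in H(T)$. I would then check that $\tilde\phi(T)$ is multilinear: linearity in the first $r$ arguments follows from multilinearity of $\phi(T)$, and linearity in $g$ follows from the fact that each $\phi(T)(g_1,\dots,g_r)$ is itself a homomorphism of $T$-group schemes. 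Functoriality of this construction in $T$ yields a multilinear morphism of $S$-group schemes $\tilde\phi:G_1\times\dotsb\times G_r\times G\to H$.

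Conversely, given a multilinear $\psi:G_1\times\dotsb\times G_r\times G\to H$, I would build its adjoint $\psi^\flat$ by specifying, for each $T$ and $(g_1,\dots,g_r)\in G_1(T)\times\dotsb\times G_r(T)$, the homomorphism $G_T\to H_T$ whose value on a $T'$-point $g\in G(T')=G_T(T')$ (for any $T'\to T$) is $\psi(T')(g_{1,T'},\dots,g_{r,T'},g)$. Linearity in $g$ makes this a homomorphism, so it defines an element of $\innHom(G,H)(T)$; multilinearity of $\psi$ in the first $r$ arguments makes $\psi^\flat(T)$ multilinear, and the assignment is natural in $T$. The two constructions $\phi\mapsto\tilde\phi$ and $\psi\mapsto\psi^\flat$ are mutually inverse by inspection, and both are natural in all arguments, completing the proof.

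The only mildly delicate point—hardly a real obstacle—is the handling of base change in the backward direction: one must construct the homomorphism $G_T\to H_T$ by giving its values on $T'$-points for every $T'\to T$, not only on $T$-points, because $\innHom(G,H)(T)$ means an actual homomorphism of $T$-group schemes. Once this is correctly set up the verification is purely formal.
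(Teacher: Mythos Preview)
Your argument is correct and follows essentially the same route as the paper: a morphism of schemes $G_1\times\dotsb\times G_r\to\innHom(G,H)$ corresponds, by the representing property of $\innHom$, to a morphism $G_1\times\dotsb\times G_r\times G\to H$ linear in $G$, and then linearity in each $G_i$ transfers back and forth because the group structure of $\innHom(G,H)$ is induced from that of $H$. The paper states this in two sentences; you have unpacked the same idea via the functor-of-points formalism and written out the two mutually inverse maps explicitly. One small remark: your opening paragraph about proving the bijection on $T$-points and invoking Yoneda is not needed here, since the lemma concerns $\Mult$ rather than $\innMult$; you effectively acknowledge this when you ``reduce to $T=S$'', and from that point on the argument is exactly what is required.
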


\begin{proof}[\textsc{Proof}. ]
By the definition of $\innHom(G,H)$, giving a morphism of schemes
$\phi:G_1\times\dotsb\times G_r\to \innHom(G,H)$  is equivalent to
giving a morphism of schemes $\widetilde{\phi}:G_1\times\dotsb\times
G_r\times G\to H$ which is linear in $G$. Since the group structure
of $\innHom(G,H)$ is induced by that of $H$, one sees easily that
$\phi$ is linear in $G_i$ if and only if $\widetilde{\phi}$ is
linear in $G_i$. This completes the proof.
\end{proof}

Now, we give an ''underline'' version of this lemma in order to show
our general result of this type:

\begin{lem}
\label{lem2}
If $\innMult(G_1\times\dotsb\times G_{r-1},
\innHom(G,H))$ and $\innHom(G,H)$ are representable, then
$\innMult(G_1\times\dotsb\times G_r, H)$ is representable and there
is a natural isomorphism
$$\innMult(G_1\times\dotsb\times G_r,
\innHom(G,H))\cong\innMult(G_1\times\dotsb\times G_r\times G,
H)$$functorial in all arguments.
\end{lem}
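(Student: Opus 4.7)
The plan is to deduce this by a Yoneda-style argument on $T$-points, using Lemma \ref{lem1} over the base $T$ together with the fact, recorded in the introduction, that $\innHom$ commutes with base change.

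For each $S$-scheme $T$, applying Lemma \ref{lem1} with the base changed to $T$, to the $T$-schemes $G_{1,T},\dotsc,G_{r,T},G_T,H_T$, yields a bijection of abelian groups
$$\Mult_T(G_{1,T}\times\dotsb\times G_{r,T},\innHom_T(G_T,H_T))\cong\Mult_T(G_{1,T}\times\dotsb\times G_{r,T}\times G_T,H_T).$$
By definition, the right-hand side is $\innMult(G_1\times\dotsb\times G_r\times G,H)(T)$. Invoking the natural isomorphism $\innHom(G,H)_T\cong\innHom_T(G_T,H_T)$, which holds since the formation of $\innHom$ commutes with base change, the left-hand side is identified with $\innMult(G_1\times\dotsb\times G_r,\innHom(G,H))(T)$.

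Next, I would check naturality in $T$: for a morphism $T'\to T$ of $S$-schemes, the pull-back maps on the $\Mult$-groups on either side are compatible, thanks to the functoriality built into Lemma \ref{lem1} and the naturality of the base-change identification of $\innHom$. This upgrades the bijection above to a natural isomorphism of contravariant functors $\Scheme_S\to\Ab$. Since the functor $\innMult(G_1\times\dotsb\times G_r,\innHom(G,H))$ is representable by hypothesis, transport of structure shows that $\innMult(G_1\times\dotsb\times G_r\times G,H)$ is representable by the same group scheme, and Yoneda promotes the functorial isomorphism to an isomorphism of group schemes. Functoriality in the arguments $G_1,\dotsc,G_r,G,H$ is inherited directly from the corresponding functoriality in Lemma \ref{lem1}.

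The only delicate step is the second one: one must check that the base-change isomorphism $\innHom(G,H)_T\cong\innHom_T(G_T,H_T)$ is itself natural in $T$ and compatible with the identifications made when instantiating Lemma \ref{lem1} over $T$. Once this bookkeeping is in hand the argument is entirely formal, and no new computation with Hopf algebras or with the actual construction of $\innHom$ is needed.
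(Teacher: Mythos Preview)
Your argument is correct and follows essentially the same route as the paper: apply Lemma~\ref{lem1} over each base $T$, use the base-change identification $\innHom(G,H)_T\cong\innHom_T(G_T,H_T)$, and conclude representability by Yoneda. The paper's proof is identical in structure, except that it proves the base-change property of $\innHom$ inline (via $(G_T)_X\cong G_X$ for a $T$-scheme $X$) rather than citing it from the introduction. One small slip: you invoke representability of $\innMult(G_1\times\dotsb\times G_r,\innHom(G,H))$ ``by hypothesis,'' but the stated hypothesis has $r-1$ factors, not $r$; this is a typo in the paper's statement (the proof treats $G$ as $G_r$ and the displayed isomorphism should read $\innMult(G_1\times\dotsb\times G_{r-1},\innHom(G_r,H))\cong\innMult(G_1\times\dotsb\times G_r,H)$), so your argument goes through once the indexing is aligned.
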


\begin{proof}[\textsc{Proof}. ]
If we establish the isomorphism, the representability will follow
directly from it, because if two functors are naturally isomorphic
and one is representable, the other is representable too. We show
thus only the isomorphism. We show at first that for any commutative
group schemes $G$ and $H$ over $S$ and any $S$-scheme $T$, we have
$\innHom(G_T,H_T)\cong\innHom(G,H)_T$. Indeed, if $X$ is any
$T$-scheme, then $\innHom(G_T,
H_T)(X)=\Hom_X((G_T)_X,(H_T)_X)=\Hom_X(G_X,H_X)=\innHom(G,H)(X)=\innHom(G,H)_T(X)$.
Now, we have $$\innMult(G_1\times G_2\times\dotsb\times
G_r,H)(T)=\Mult(G_{1,T}\times G_{2,T}\times\dotsb\times
G_{r,T},H_T)$$ and by Lemma \ref{lem1} this is isomorphic to
$$\Mult(G_{1,T}\times G_{2,T}\times\dotsb\times
G_{r-1,T},\innHom(G_{r,T},H_T)).$$ By the above discussion, it is
isomorphic to
$$\Mult(G_{1,T}\times G_{2,T}\times\dotsb\times G_{r-1,T},\innHom(G_r,H)_T)=$$ $$\innMult(G_1\times
G_2\times\dotsb\times G_{r-1},\innHom(G_r,H))(T).$$ This achieves
the proof.
\end{proof}

\begin{rem}
\label{rem3}\begin{itemize}
\item[1)]Assume that $G_1,\dotsc,G_r$ are finite and flat over
$S$. We can show by induction on $r$ that
$\innMult(G_1\times\dotsb\times G_r,H)$ is representable by a
commutative group scheme. If furthermore $H$ is affine or resp. of
finite type, then $\innMult(G_1\times\dotsb\times G_r,H)$ has the
same property. Indeed, if $r=1$ then this is exactly Theorem 3.10 in
\cite{Pink1}. So let $r>1$ and suppose that the statement is true
for $r-1$. By the induction hypothesis,
$\innMult(G_1\times\dotsb\times G_{r-1},\innHom(G_r,H))$ is
representable and has the same properties (affineness or being of
finite type) of $\innHom(G,H)$ which has itself the same properties
as $H$ according to Theorem 3.10 in \cite{Pink1}. From Lemma
\ref{lem2}, it follows that
$$\innMult(G_1\times\dotsb\times G_{r-1},\innHom(G,H))\cong
\innMult(G_1\times\dotsb\times G_r,H).$$ Hence, the right hand side
is representable and has the same properties as $H$.
\item[2)] Let $G$ be finite and flat over $S$. By definition \ref{dfn4}, it is clear that the functors
$\innSym(G^r,H)$ and $\innAlt(G^r,H)$ are
subfunctors of the representable functor $\innMult(G^r,H)$. Since
the conditions defining these subfunctors are closed conditions
(given by equations), they are represented by closed subgroup
schemes.
\item[3)] We will thus make the assumption that every time we use $\innMult(G_1\times\dotsb\times G_1,H), \innSym(G^r,H)$ or
$\innAlt(G^r,H)$, the group schemes $G_1,\dotsc,G_r$ and $G$ are
finite and flat over $S$ and we will no longer worry about the
representability of these functors.\xqed{\lozenge}
\end{itemize}
\end{rem}

Here is the desired proposition:
\begin{prop}
\label{prop2}
Let $G_1,\,\dotsc,\,G_r,\,H_1,\,\dotsc,\,H_s,\,F$ be
commutative group\\ schemes over a base scheme $S$. We have a
natural isomorphism
$$\Mult(G_1\times\dotsb\times G_r,\innMult(H_1\times\dotsb\times
H_s,F))\cong\Mult(G_1\times\dotsb\times G_r\times
H_1\times\dotsb\times H_s,F)$$ functorial in all arguments.
\end{prop}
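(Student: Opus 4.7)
The plan is to proceed by induction on $s$, leveraging Lemmas \ref{lem1} and \ref{lem2}. The base case $s=1$ is immediate: $\innMult(H_1, F)$ is literally $\innHom(H_1, F)$, so the claim is then precisely Lemma \ref{lem1}.

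For the inductive step I would assume the proposition holds whenever the second block contains $s-1$ factors, for an arbitrary commutative group scheme in the role of $F$. First I would apply Lemma \ref{lem2}, with $H_s$ in the role of the distinguished factor $G$ and $F$ in the role of $H$, to peel off $H_s$ from the inner $\innMult$:
$$\innMult(H_1 \times \dotsb \times H_s, F) \;\cong\; \innMult(H_1 \times \dotsb \times H_{s-1},\, \innHom(H_s, F)).$$
Substituting this into the left-hand side of the proposition and invoking the inductive hypothesis with $s-1$ factors and target $\innHom(H_s, F)$ gives
$$\Mult\bigl(G_1 \times \dotsb \times G_r \times H_1 \times \dotsb \times H_{s-1},\, \innHom(H_s, F)\bigr).$$
A single application of Lemma \ref{lem1}, now viewing $G_1 \times \dotsb \times G_r \times H_1 \times \dotsb \times H_{s-1}$ as the outer product and $H_s$ as the distinguished factor, converts this into the desired right-hand side $\Mult(G_1 \times \dotsb \times G_r \times H_1 \times \dotsb \times H_s, F)$. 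Composing the three isomorphisms finishes the induction.

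Functoriality in all arguments is inherited for free: each of the three building blocks (Lemmas \ref{lem1}, \ref{lem2}, and the inductive hypothesis) is already natural in every argument, and a composite of natural isomorphisms is natural. The only real obstacle is bookkeeping rather than substance; at each step of the induction one must check that the representability hypotheses of Lemma \ref{lem2} hold for the particular inner $\innMult$ and $\innHom$ that appear. Under the standing convention of Remark \ref{rem3}(3) the groups $H_j$ are finite and flat over $S$, so these representability conditions follow at every stage from Remark \ref{rem3}(1), and the argument reduces to an essentially mechanical unwinding of universal properties.
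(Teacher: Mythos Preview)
Your proof is correct and essentially the same as the paper's: both argue by induction on $s$ with Lemma~\ref{lem1} as the base case, and both carry out the inductive step by composing Lemma~\ref{lem1}, Lemma~\ref{lem2}, and the induction hypothesis. The only cosmetic difference is that you start from the left-hand side while the paper starts from the right, so the three isomorphisms appear in reverse order.
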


\begin{proof}[\textsc{Proof}. ]
We prove this proposition by induction on $s$. If $s=1$, then it is
exactly the Lemma \ref{lem1}. So assume that $s>1$ and that the
proposition is true for $s-1$. We have a series of isomorphisms:
$$\Mult(G_1\times\dotsb\times G_r\times H_1\times\dotsb\times
H_s,F)\overset{\ref{lem1}}{\cong}$$
$$\Mult(G_1\times\dotsb\times G_r\times H_1\times\dotsb\times H_{s-1},\innHom(H_s,F))\overset{\text{ind.}}{\underset{\text{hyp.}}{\cong}}$$
$$\Mult(G_1\times\dotsb\times G_r,\innMult(H_1\times\dotsb\times H_{s-1},\innHom(H_s,F)))\overset{\ref{lem2}}{\cong}$$
$$\Mult(G_1\times\dotsb\times G_r,\innMult(H_1\times\dotsb\times H_s,F)).$$
\end{proof}

\begin{rem}
\label{rem4} Let $S$ be a scheme and $G_1\dotsc,G_r,H_1\dotsc,H_s,G,H$ and $F$
commutative group schemes over $S$. There is a natural
action of the symmetric group $S_n$ on $H^n$ that induces an action
on the group scheme $\innMult(H^n,F)$ which itself induces an action
on the group
$$\Mult(G_1\times\dotsb\times G_r,\innMult(H^n,F)).$$ We also have a
natural action of this group on the group
$$\Mult(G_1\times\dotsb\times G_1\times H^n,H).$$ One checks
that the isomorphism in the proposition is invariant under the
action of $S_n$. Similarly, we have an action of the symmetric group
$S_m$ on
$$\Mult(G^m,\innMult(H_1\times\dotsb\times H_s,F))\,\ \text{and}\,\
\Mult(G^m\times H_1\times\dotsb\times H_s,F)$$ induced by its action
on $G^m$. Again, one can easily verify that the isomorphism in the
proposition is invariant under this action of $S_m$.\xqed{\lozenge}\end{rem}

In the same way that Lemma \ref{lem2} follows from Lemma \ref{lem1}
the following proposition can be deduced from Proposition
\ref{prop2}; we will thus omit the proof:

\begin{prop}
\label{prop3} Let $G_1,\,\dotsc,\,G_r,\,H_1,\,\dotsc,\,H_s,\,F$ be
commutative group\\ schemes over a base scheme $S$. We have a
natural isomorphism
$$\innMult(G_1\times\dotsb\times G_r,\innMult(H_1\times\dotsb\times
H_s,F))\cong\innMult(G_1\times\dotsb\times G_r\times
H_1\times\dotsb\times H_s,F)$$ functorial in all arguments.\qed
\end{prop}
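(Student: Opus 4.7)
The plan is to deduce this from Proposition \ref{prop2} by the same yoga used to pass from Lemma \ref{lem1} to Lemma \ref{lem2}: namely, evaluate both sides on an arbitrary test scheme $T$, use that inner multilinear morphism functors commute with base change, and then invoke Proposition \ref{prop2} over $T$. Since both sides are contravariant functors from $\Scheme_S$ to $\Ab$, a natural isomorphism of these functors is equivalent to a natural (in $T$) isomorphism of abelian groups for every $S$-scheme $T$.

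First I would record the base-change compatibility: for any commutative group schemes $H_1,\dots,H_s,F$ over $S$ (with the $H_j$ finite and flat so that $\innMult$ is defined) and any $S$-scheme $T$, there is a canonical isomorphism
$$\innMult(H_1\times\dotsb\times H_s,F)_T\;\cong\;\innMult(H_{1,T}\times\dotsb\times H_{s,T},F_T).$$
This is proved exactly as in the inner-Hom case inside the proof of Lemma \ref{lem2}: for a $T$-scheme $X$, both sides evaluated at $X$ equal $\Mult_X(H_{1,X}\times\dotsb\times H_{s,X},F_X)$, using that $(H_{j,T})_X=H_{j,X}$ and $(F_T)_X=F_X$.

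Next, I would fix an $S$-scheme $T$ and run the following chain, each step of which is natural in $T$ and in all the group-scheme arguments:
\begin{align*}
\innMult(G_1\times\dotsb\times G_r,\innMult(H_1\times\dotsb\times H_s,F))(T)
&= \Mult_T\!\bigl(G_{1,T}\times\dotsb\times G_{r,T},\innMult(H_1\times\dotsb\times H_s,F)_T\bigr) \\
&\cong \Mult_T\!\bigl(G_{1,T}\times\dotsb\times G_{r,T},\innMult(H_{1,T}\times\dotsb\times H_{s,T},F_T)\bigr) \\
&\cong \Mult_T\!\bigl(G_{1,T}\times\dotsb\times G_{r,T}\times H_{1,T}\times\dotsb\times H_{s,T},F_T\bigr) \\
&= \innMult(G_1\times\dotsb\times G_r\times H_1\times\dotsb\times H_s,F)(T).
\end{align*}
The first equality is Definition \ref{dfn2}; the second uses the base-change isomorphism just established; the third is Proposition \ref{prop2} applied over the base $T$; the fourth is again Definition \ref{dfn2}. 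Assembling these as $T$ varies gives the desired isomorphism of functors, hence of the representing group schemes (representability was already handled in Remark \ref{rem3}).

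The only point that requires any real verification is the base-change compatibility in the first step, but this is a direct unwinding of definitions and offers no obstacle. Naturality in the arguments $G_i$, $H_j$ and $F$ is inherited from the naturality of each ingredient (Definition \ref{dfn2}, base change, and Proposition \ref{prop2}), so the resulting isomorphism is functorial in all inputs as claimed. In particular, as in Remark \ref{rem4}, the relevant symmetric group actions (on $G^m$ or $H^n$ when the $G_i$ or $H_j$ coincide) are respected at each step, so the isomorphism is equivariant whenever such actions are present.
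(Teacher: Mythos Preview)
Your proof is correct and follows exactly the approach the paper indicates: the paper omits the proof, saying only that Proposition~\ref{prop3} is deduced from Proposition~\ref{prop2} in the same way Lemma~\ref{lem2} follows from Lemma~\ref{lem1}, which is precisely the evaluate-on-$T$, base-change, apply-Proposition~\ref{prop2}-over-$T$ argument you wrote out.
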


Fix a base scheme $S=\Spec k$ for a field $k$ and let
$G_1,\,G_2,\,\dotsc,\, G_r$ be finite commutative group schemes and
let $H$ be a commutative group scheme. Then by Proposition
\ref{prop2}, we have an isomorphism that is functorial in $H$:
$$\Mult (H\times G_1\times G_2\times\dotsb\times G_r,\BG_m)\cong \Hom(H,\innMult(G_1\times\dotsb\times G_r,\BG_m)).$$

Let us write $\widetilde{G}$ for $\innMult(G_1\times\dotsb\times
G_r,\BG_m)$. Then this means that we have a natural isomorphism
$$\Mult(-\times G_1\times\dotsb\times
G_r,\BG_m)\arrover{\tau}\Hom(-,\widetilde{G})=\widetilde{G}(-)$$ or
in other words, the group scheme $\widetilde{G}$ represents the
functor $$\Mult(-\times G_1\times\dotsb\times G_r,\BG_m).$$ Assume
that we have a multilinear morphism $$\phi:H\times G_1\times\dotsb
G_r\to \BG_m,$$ then by functoriality, we have a commutative
diagram:

$$\xymatrix{\Mult(\widetilde{G}\times G_1\times\dotsb\
G_r,\BG_m)\ar[d]_{(\tau_H(\phi)\times 1\times\dotsb\times
1)^*}\ar[rr]^{\text{\qquad\qquad}\tau_{\widetilde{G}}}_{\text{\qquad\qquad}\cong}
& &
\Hom(\widetilde{G},\widetilde{G})\ar[d]_{\tau_H(\phi)}\\
\Mult(H\times G_1\times\dotsb\times
G_r,\BG_m)\ar[rr]^{\text{\qquad\qquad}\tau_H}_{\text{\qquad\qquad}\cong}
& & \Hom(H,\widetilde{G})}$$

where $\tau_H(\phi)^*(f)=f\circ\tau_H(\phi)$ and
$$(\tau_H(\phi)\times 1\times\dotsb\times 1)^*(g)=(\tau_H(\phi)\times
1\times\dotsb\times 1)\circ g.$$

\begin{prop}
\label{prop6} Assume that
$\psi_{\widetilde{G}}\in\Mult(\widetilde{G}\times
G_1\times\dotsb\times G_r,\BG_m)$ is such that
$\tau_{\widetilde{G}}(\psi_{\widetilde{G}})=\Id_{\widetilde{G}}$.
Then the pair $(\psi_{\widetilde{G}},\widetilde{G})$ satisfies the
following universal property: Given any multilinear morphism
$\phi:H\times G_1\times\dotsb\times G_r\to\BG_m$ there is a unique
homomorphism $\widetilde{\phi}:H\to\widetilde{G}$ such that the
following diagram commutes
$$\xymatrix{
H\times G_1\times\dotsb\times G_r\ar[dr]_{\widetilde{\phi}\times 1\times\dotsb\times 1}\ar[rr]^{\phi}&&\BG_m\\
&\widetilde{G}\times G_1\times\dotsb\times
G_r\ar[ur]_{\psi_{\widetilde{G}}}.&}$$\\
\end{prop}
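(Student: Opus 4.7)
The statement is essentially an instance of Yoneda's lemma applied to the representing pair $(\widetilde{G}, \psi_{\widetilde{G}})$, and the proof will proceed by chasing the commutative diagram displayed immediately before the proposition. My plan is to define the candidate homomorphism by $\widetilde{\phi} := \tau_H(\phi)$, then use the naturality square to verify both existence and uniqueness simultaneously.

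First, for existence, I would start with $\Id_{\widetilde{G}} \in \Hom(\widetilde{G}, \widetilde{G})$ in the upper right corner of the displayed diagram and chase it two ways. Going left via $\tau_{\widetilde{G}}^{-1}$ sends it to $\psi_{\widetilde{G}}$ by the defining assumption, and then down via $(\tau_H(\phi) \times 1 \times \dotsb \times 1)^*$ yields $\psi_{\widetilde{G}} \circ (\tau_H(\phi) \times 1 \times \dotsb \times 1)$. Going down first via $\tau_H(\phi)^*$ sends $\Id_{\widetilde{G}}$ to $\tau_H(\phi)$, so the commutativity of the square gives $\tau_H\bigl(\psi_{\widetilde{G}} \circ (\tau_H(\phi) \times 1 \times \dotsb \times 1)\bigr) = \tau_H(\phi)$. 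Since $\tau_H$ is a bijection and $\tau_H(\phi) = \tau_H(\phi)$, injectivity forces $\psi_{\widetilde{G}} \circ (\widetilde{\phi} \times 1 \times \dotsb \times 1) = \phi$ with $\widetilde{\phi} := \tau_H(\phi)$, which is precisely the required factorization.

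For uniqueness, suppose $\widetilde{\phi}': H \to \widetilde{G}$ is any homomorphism making the triangle commute, so that $\psi_{\widetilde{G}} \circ (\widetilde{\phi}' \times 1 \times \dotsb \times 1) = \phi$. Running the same diagram chase with $\widetilde{\phi}'$ in place of $\tau_H(\phi)$ (this is legitimate because the naturality square from the text is valid for any morphism $H \to \widetilde{G}$ in the right vertical slot), I obtain $\tau_H(\phi) = \tau_H\bigl(\psi_{\widetilde{G}} \circ (\widetilde{\phi}' \times 1 \times \dotsb \times 1)\bigr) = \widetilde{\phi}'$, whence $\widetilde{\phi}' = \widetilde{\phi}$.

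The argument is essentially formal, so I do not anticipate a real obstacle; the only subtle point is to make sure that the commutative diagram preceding the proposition is genuinely natural in $H$ (as a statement about the functorial isomorphism $\tau_{(-)}$ established via Proposition \ref{prop2}), rather than being stated only for the specific morphism $\tau_H(\phi)$ — but inspection shows that the square commutes for any homomorphism $H \to \widetilde{G}$ plugged into the vertical arrows, which is exactly what the uniqueness step requires.
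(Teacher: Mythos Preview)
Your proof is correct and follows essentially the same Yoneda-style diagram chase as the paper's own argument: both define $\widetilde{\phi}:=\tau_H(\phi)$ and use the naturality square together with the bijectivity of $\tau_H$ to verify the factorization. The only cosmetic difference is in the uniqueness step: the paper invokes the \emph{surjectivity} of $\tau_H$ to write an arbitrary $f:H\to\widetilde{G}$ as $\tau_H(\phi')$ and then reuses the existence computation, whereas you invoke the naturality of the square directly with $\widetilde{\phi}'$ plugged into the vertical arrows; these are two phrasings of the same step, and your remark that the square must commute for \emph{any} $H\to\widetilde{G}$ is exactly what makes this work.
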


\begin{proof}[\textsc{Proof}. ]
We show that the unique morphism $\widetilde{\phi}$ is
$\tau_H(\phi)$. Since
$\tau_{\widetilde{G}}(\psi_{\widetilde{G}})=\Id_{\widetilde{G}}$, we
have by the commutative diagram before the proposition
$$\tau_H((\tau_H(\phi)\times 1\times\dotsb\times
1)\circ\psi_{\widetilde{G}})=\tau_H((\tau_H(\phi)\times
1\times\dotsb\times 1)^*(\psi_{\widetilde{G}}))=$$
$$\tau_H(\phi)^*(\tau_{\widetilde{G}}(\psi_{\widetilde{G}}))=\tau_H(\phi)^*(\Id_{\widetilde{G}})=\tau_H(\phi).$$
But, $\tau_H$ is injective and therefore $\phi=\tau_H(\phi)\times
1\times\dotsb\times 1\circ\psi_{\widetilde{G}}.$ This shows the
existence of $\widetilde{\phi}$.

Now, if we have a morphism $f:H\to\widetilde{G}$ with the property
that $\phi=f\times 1\times\dotsb\times 1\circ\psi_{\widetilde{G}}$,
then by surjectivity of $\tau_H$, there exists a multilinear
morphism $\phi':H\times G_1\times\dotsb\times G_r\to\BG_m$ such that
$\tau_H(\phi')=f$, and from what we have shown above
$$\phi'=\tau_H(\phi')\times 1\times\dotsb\times 1\circ\psi_{\widetilde{G}}=f\times 1
\times \dotsb\times 1\circ\psi_{\widetilde{G}}=\phi$$ Thus
$\tau_H(\phi)=\tau_H(\phi')=f$. This proves the uniqueness of
$\widetilde{\phi}$.
\end{proof}

\begin{dfn}
\label{dfn7} We call the group scheme
$\innMult(G_1\times\dotsb\times G_r,\BG_m)$, resp. the multilinear
morphism $\psi:\innMult(G_1\times\dotsb\times G_r,\BG_m)\times
G_1\times\dotsb\times G_r\to \BG_m$ defined in Proposition
\ref{prop6}, \emph{the universal group scheme resp. the universal
multilinear morphism associated to $G_1,\dotsc,G_r$.}
\xqed{\blacktriangle}\end{dfn}

\begin{lem}
\label{lem3} Let $G,H$ be commutative group schemes over a base
scheme $S$ and let $\Gamma$ be a finite group acting on $G$. Then we
have a natural isomorphism
$$\Hom(H,G)^{\Gamma}\cong\Hom(H,G^{\Gamma}),$$ where $G^{\Gamma}$ is
the subgroup scheme of fixed points, in other words,
$G^{\Gamma}(T)=G(T)^{\Gamma}$ for any $S$-scheme $T$, where
$G(T)^{\Gamma}$ is the subgroup of fixed points of the abelian
$\Gamma$-group $G(T)$ and the action of $\Gamma$ on $\Hom(H,G)$ is
induced by its action on $G$. More precisely, the image of the
inclusion $\Hom(H,G^{\Gamma})\into \Hom(H,G)$ is the group of fixed
points $\Hom(H,G)^{\Gamma}$.
\end{lem}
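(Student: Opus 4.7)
The plan is first to realise the fixed-point subfunctor $G^{\Gamma}$ as a closed subgroup scheme of $G$, and then to verify by direct inspection on $T$-valued points that a homomorphism $H\to G$ is $\Gamma$-invariant for the post-composition action exactly when it factors through this subgroup scheme. The naturality in $H$ and $G$ will then be automatic from the construction.

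For the first step, for each $\gamma\in\Gamma$ let $a_\gamma:G\to G$ denote the action morphism, and define $G^{\Gamma}$ as the intersection $\bigcap_{\gamma\in\Gamma}\mathrm{Eq}(a_\gamma,\id_G)$ of the equalisers inside $G$. Under the standing convention that all schemes are separated, each equaliser $\mathrm{Eq}(a_\gamma,\id_G)$ is a closed subscheme of $G$, and since $\Gamma$ is finite the intersection is closed as well. For any $S$-scheme $T$, a point $g\in G(T)$ lies in $G^{\Gamma}(T)$ iff $a_\gamma(g)=g$ for every $\gamma$, i.e.\ iff $g\in G(T)^{\Gamma}$, so $G^{\Gamma}$ represents the claimed subfunctor. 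Because $\Gamma$ acts by group homomorphisms, $G^{\Gamma}$ is stable under multiplication and inversion and is therefore a commutative group subscheme of $G$.

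For the second step, let $\iota:G^{\Gamma}\hookrightarrow G$ be the inclusion; post-composition with $\iota$ gives an injective map $\iota_*:\Hom(H,G^{\Gamma})\to\Hom(H,G)$ since $\iota$ is a monomorphism. If $f:H\to G^{\Gamma}$ and $h\in H(T)$ is any $T$-valued point, then $(\iota\circ f)(h)\in G^{\Gamma}(T)$ is by definition fixed by each $\gamma$, hence $a_\gamma\circ\iota\circ f=\iota\circ f$ and $\iota_* f\in\Hom(H,G)^{\Gamma}$. Conversely, if $g:H\to G$ satisfies $a_\gamma\circ g=g$ for all $\gamma$, then for every $T$ and every $h\in H(T)$ one has $a_\gamma(g(h))=g(h)$, so $g(h)\in G^{\Gamma}(T)$; by the universal property of $G^{\Gamma}$ the morphism $g$ factors uniquely as $\iota\circ f$ for some $f:H\to G^{\Gamma}$. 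This identifies the image of $\iota_*$ with $\Hom(H,G)^{\Gamma}$ and yields the desired natural isomorphism.

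There is really no serious obstacle here: the entire content of the lemma is the standard construction of a fixed-point subscheme for a finite group action on a separated scheme, combined with the tautological observation that a morphism is $\Gamma$-equivariant with respect to the trivial action on the source if and only if its image lies in the fixed-point locus. The only point that requires a little care is invoking separatedness (to make each equaliser closed) and finiteness of $\Gamma$ (to keep the intersection well-behaved), both of which are assumptions already in force in the paper.
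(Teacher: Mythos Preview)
Your argument is correct and follows essentially the same route as the paper's proof: both directions of the inclusion are checked by composing with the action and using that a $\Gamma$-fixed morphism factors through $G^{\Gamma}$. The only difference is that you add a preliminary step constructing $G^{\Gamma}$ explicitly as an intersection of equalisers, whereas the paper simply takes the representability of the fixed-point subfunctor for granted.
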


\begin{proof}[\textsc{Proof}. ]
Let $\phi:H\to G^{\Gamma}$ and $\gamma\in\Gamma$ be given. The image
of $\phi$ under the inclusion in the lemma is the composition
$H\arrover{\phi}G^{\Gamma}\into G$ and under the action of $\gamma$
on $\Hom(H,G)$ it maps to the morphism $H\arrover{\phi}
G^{\Gamma}\into G\arrover{\gamma\cdot} G$. But by definition of
$G^{\Gamma}$, we have that the composition $G^{\Gamma}\into
G\arrover{\gamma\cdot} G$ is the same as the composition
$G^{\Gamma}\into G$ and therefore $\gamma$ fixes the image of $\phi$
and hence it is an element of $\Hom(H,G)^{\Gamma}$. We have thus an
inclusion $\Hom(H,G^{\Gamma})\subset \Hom(H,G)^{\Gamma}$, where we
have identified $\Hom(H,G^{\Gamma})$ with its image.

Now, assume that we have a morphism $\psi:H\to G$ which lies inside
the group of fixed points. This means that the composition
$\gamma\circ\psi$ for any $\gamma\,\cdot:G\to G$ is equal to $\psi$
and therefore $\psi$ must factor through $G^{\Gamma}$. This gives
the inclusion $\Hom(H,G)^{\Gamma}\subset \Hom(H,G^{\Gamma})$ and the
lemma is proved.
\end{proof}

We are now going to apply this lemma to the particular case, where
the acting group is the symmetric group $S_n$ which acts on the
group scheme $\innMult(G^n,F)$ with $G$ and $F$ two commutative
group schemes. The lemma states that we have an isomorphism

$$\Hom(H,\innMult(G^n,F)^{S_n})\cong \Hom(H,\innMult(G^n,F))^{S_n}$$

By Definitions \ref{dfn3} and \ref{dfn4} and Remark \ref{rem2},
$\innSym(G^n,F)$ is exactly the group of fixed points
$\innMult(G^n,F)^{S_n}$, and therefore we can rewrite the last
isomorphism as
$$\Hom(H,\innSym(G^n,F))\cong \Hom(H,\innMult(G^n,F))^{S_n}\qquad (\bigstar)$$

We now apply Proposition \ref{prop2} and Remark \ref{rem4}: taking
the fixed points of both sides of the isomorphism in Proposition
\ref{prop2}, we will again get an isomorphism. We can thus apply it
to our situation, and obtain the isomorphism:
$$\Hom(H,\innMult(G^n,F))^{S_n}\cong\Mult(H\times G^n,F)^{S_n}.$$
Combining this with $(\bigstar)$, we have the following proposition:

\begin{prop}
\label{prop4}
With the above notations, there is a natural
isomorphism
$$\Hom(H,\innSym(G^n,F))\cong\Mult(H\times G^n,F)^{S_n}$$
functorial in all arguments.\qed
\end{prop}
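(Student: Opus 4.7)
The plan is to assemble the desired isomorphism by chaining three previously established results: Lemma \ref{lem3}, Proposition \ref{prop2}, and Remark \ref{rem4}. No new constructions are required; the work consists essentially in verifying that an $S_n$-action is respected at each step and in tracking naturality. In fact, the paragraph preceding the proposition already carries out the assembly, so the proof amounts to writing down that argument cleanly.

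First I would invoke Lemma \ref{lem3} with $\Gamma=S_n$ acting on the group scheme $\innMult(G^n,F)$ via permutation of the factors of $G^n$. Since, by Remark \ref{rem2} and Definition \ref{dfn4}, we have $\innSym(G^n,F)=\innMult(G^n,F)^{S_n}$, this yields the natural isomorphism
$$\Hom(H,\innSym(G^n,F))\;\cong\;\Hom(H,\innMult(G^n,F))^{S_n},$$
which is the identity $(\bigstar)$ highlighted above.

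Second I would apply Proposition \ref{prop2} in the case $r=1$ (with the single factor $H$) and $s=n$ (with $H_1=\dots=H_n=G$), obtaining
$$\Hom(H,\innMult(G^n,F))\;=\;\Mult(H,\innMult(G^n,F))\;\cong\;\Mult(H\times G^n,F).$$
By Remark \ref{rem4}, this isomorphism is $S_n$-equivariant, where the action on the left is induced by the $S_n$-action on $\innMult(G^n,F)$ and the action on the right is induced by the permutation action on $G^n$. Passing to $S_n$-fixed points therefore gives
$$\Hom(H,\innMult(G^n,F))^{S_n}\;\cong\;\Mult(H\times G^n,F)^{S_n}.$$

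Composing this with $(\bigstar)$ produces the desired natural isomorphism. Functoriality in $H$, $G$, and $F$ is inherited from the functoriality already built into Lemma \ref{lem3} and Proposition \ref{prop2}. The only point that requires care, and which I would view as the main (if minor) obstacle, is ensuring that the two $S_n$-actions on $\Hom(H,\innMult(G^n,F))$—one inherited from its action on the target $\innMult(G^n,F)$, and the other induced by permuting the $G$-factors on the $\Mult(H\times G^n,F)$ side—correspond under the isomorphism of Proposition \ref{prop2}. This compatibility is precisely the content of Remark \ref{rem4}, so invoking that remark closes the argument.
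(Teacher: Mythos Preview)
Your proposal is correct and follows essentially the same approach as the paper: apply Lemma \ref{lem3} to identify $\Hom(H,\innSym(G^n,F))$ with $\Hom(H,\innMult(G^n,F))^{S_n}$, then use Proposition \ref{prop2} together with the $S_n$-equivariance from Remark \ref{rem4} to pass to $\Mult(H\times G^n,F)^{S_n}$. As you yourself note, this is exactly the argument written out in the paragraph preceding the proposition.
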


\begin{rem}
\label{rem5}
\begin{itemize}
\item[1)]We recall that the action of $S_n$ on the right hand side
consists of permuting the factors of $G^n$ and consequently, the
group $\Mult(H\times G^n,F)^{S_n}$ contains the multilinear
morphisms from $H\times G^n$ to $F$ that are symmetric in $G^n$.
\item[2)]Note that the functoriality of this isomorphism in $H$ implies
that the group scheme $\innSym(G^n,F)$ represents the functor
$\Mult(-\times G^n,F)^{S_n}$.
\item[3)]It is clear that if we change $H\times G^n$ to $G^n\times
H$ the proposition remains valid; we have thus another natural and
functorial isomorphism
$$\Hom(H,\innSym(G^n,F))\cong\Mult(G^n\times H,F)^{S_n}.$$
\end{itemize}
\xqed{\lozenge}\end{rem}

Similar arguments prove the following proposition:
\begin{prop}
\label{prop10} Let $G, H_1,\dots,H_r$ and $F$ be commutative group
schemes. We have a natural isomorphism
$$\Mult(H_1\times\dots\times H_r\times
G^n,F)^{S_n}\cong\Mult(H_1\times\dots\times
H_r,\innSym(G^n,F)).$$\qed
\end{prop}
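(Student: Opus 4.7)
The plan is to mimic the derivation of Proposition \ref{prop4}, adapting it to allow $r$ arguments $H_1,\dots,H_r$ in place of the single group scheme $H$. The two main ingredients that need to be combined are Proposition \ref{prop2} (to trade the $G^n$ factor on the left for $\innMult(G^n,F)$ on the right) and an $\innMult$-version of Lemma \ref{lem3} (to pass from $\innMult(G^n,F)^{S_n}$ to $\innSym(G^n,F)$).

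First I would apply Proposition \ref{prop2} with $G_1\times\dots\times G_r$ replaced by $H_1\times\dots\times H_r$ and $H_1\times\dots\times H_s$ replaced by $G^n = G\times\dots\times G$, yielding a natural isomorphism
$$\Mult(H_1\times\dots\times H_r\times G^n,F)\cong\Mult(H_1\times\dots\times H_r,\innMult(G^n,F)).$$
By Remark \ref{rem4}, this isomorphism intertwines the action of $S_n$ that permutes the $G^n$ factors on the left with the action on the right induced by the $S_n$-action on $\innMult(G^n,F)$. Taking $S_n$-invariants on both sides is an exact functor on the category of abelian groups, so we obtain
$$\Mult(H_1\times\dots\times H_r\times G^n,F)^{S_n}\cong\Mult(H_1\times\dots\times H_r,\innMult(G^n,F))^{S_n}.$$

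The second step is to identify the right hand side with $\Mult(H_1\times\dots\times H_r,\innSym(G^n,F))$. This is a multilinear analogue of Lemma \ref{lem3}: for any commutative group scheme $X$ equipped with an action of a finite group $\Gamma$, a multilinear morphism $\phi\colon H_1\times\dots\times H_r\to X$ is $\Gamma$-fixed if and only if it factors through the closed subgroup scheme $X^{\Gamma}$. The proof is identical in spirit to Lemma \ref{lem3}: a morphism $\phi$ lands in $X^{\Gamma}$ iff for every $S$-scheme $T$ the image of $\phi(T)$ lies in $X(T)^{\Gamma}$, and $\gamma\cdot\phi=\phi$ for all $\gamma\in\Gamma$ is exactly the condition that $\phi(T)$ takes values in the $\Gamma$-fixed points; multilinearity is preserved when factoring through a subgroup scheme. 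Applied to $X=\innMult(G^n,F)$ and $\Gamma=S_n$, and using $\innSym(G^n,F)=\innMult(G^n,F)^{S_n}$ from Definition \ref{dfn4} and Remark \ref{rem2}, this gives
$$\Mult(H_1\times\dots\times H_r,\innMult(G^n,F))^{S_n}\cong\Mult(H_1\times\dots\times H_r,\innSym(G^n,F)).$$
Composing with the isomorphism above completes the proof, and functoriality in all the arguments follows from the functoriality asserted in Proposition \ref{prop2} and the naturality of passing to the fixed-point subscheme.

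The only step that is not purely formal is the $S_n$-equivariance of the isomorphism of Proposition \ref{prop2}, which is the main conceptual obstacle but is already recorded in Remark \ref{rem4}; granting that, the rest of the argument is routine, which is presumably why the authors omit it under the heading \emph{similar arguments}.
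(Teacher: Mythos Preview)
Your proof is correct and follows essentially the same approach the paper intends: it is the direct generalization of the derivation of Proposition~\ref{prop4}, replacing the single $H$ by $H_1\times\dots\times H_r$ and $\Hom$ by $\Mult$, and is also the argument sketched in Remark~\ref{rem12}. The only ingredient beyond the paper's explicit statements is the multilinear version of Lemma~\ref{lem3}, which you correctly note is proved identically.
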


We can show, with slight modifications of arguments, similar results
concerning the group of alternating multilinear morphisms and in
particular the following proposition:
\begin{prop}
\label{prop11} With above notations we have a natural isomorphism
$$\Alt(H_1\times\dots\times H_r\times
G^n,F)\cong\Mult(H_1\times\dots\times H_r,\innAlt(G^n,F))$$ where
the first group is the group of multilinear morphisms that are
alternating in $G^n$.\qed
\end{prop}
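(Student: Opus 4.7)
The plan is to deduce this proposition from Proposition \ref{prop2} by the same pattern that yielded Proposition \ref{prop4}: intersect both sides of the isomorphism of Proposition \ref{prop2} with the appropriate subgroups of alternating morphisms. The role played in the symmetric case by Lemma \ref{lem3} (identifying $\innSym(G^n,F)$ with $\innMult(G^n,F)^{S_n}$) is here played by the tautological observation, following from Definition \ref{dfn4}, that $\innAlt(G^n,F)$ is the subfunctor of $\innMult(G^n,F)$ whose value on $T$ is $\Alt_T(G_T^n,F_T)\subset\Mult_T(G_T^n,F_T)$.

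First I would write down the natural isomorphism of Proposition \ref{prop2},
$$\tau:\Mult(H_1\times\dotsb\times H_r\times G^n,F)\arrover{\cong}\Mult(H_1\times\dotsb\times H_r,\innMult(G^n,F)),$$
and unwind its construction: a multilinear morphism $\phi:H_1\times\dotsb\times H_r\times G^n\to F$ corresponds to the morphism $\widetilde{\phi}:H_1\times\dotsb\times H_r\to\innMult(G^n,F)$ whose value on a $T$-valued point $(h_1,\dotsc,h_r)$ is the multilinear morphism $G_T^n\to F_T$ sending $(g_1,\dotsc,g_n)$ to $\phi_T(h_1,\dotsc,h_r,g_1,\dotsc,g_n)$. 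From this explicit description I would verify the equivalence of the following three conditions: (a) $\phi$ is alternating in $G^n$, i.e.\ its restriction to $H_1\times\dotsb\times H_r\times\Delta^n_{ij}G$ vanishes for all $1\leq i,j\leq n$; (b) for every $T$ and every $T$-valued point $(h_1,\dotsc,h_r)$ of $H_1\times\dotsb\times H_r$, the morphism $\widetilde{\phi}_T(h_1,\dotsc,h_r)$ is alternating, i.e.\ lies in $\innAlt(G^n,F)(T)$; (c) $\widetilde{\phi}$ factors through the subgroup scheme $\innAlt(G^n,F)\into\innMult(G^n,F)$, equivalently $\widetilde{\phi}\in\Mult(H_1\times\dotsb\times H_r,\innAlt(G^n,F))$. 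Once this is in hand, $\tau$ restricts to the desired natural isomorphism, and functoriality in all arguments is inherited from that of $\tau$.

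The only genuine step is the equivalence of (a) and (b). It rests on the compatibility $\Delta^n_{ij}(G_T)=(\Delta^n_{ij}G)_T$ under base change, which translates the pointwise vanishing of $\widetilde{\phi}_T(h_1,\dotsc,h_r)$ on the diagonals of $G_T^n$ into the vanishing of $\phi$ on the diagonals of $H_1\times\dotsb\times H_r\times G^n$. No substantial obstacle is expected; the argument is formal once one accepts the universal property built into Definition \ref{dfn4}, which is exactly what makes the alternating case parallel to, rather than more delicate than, the symmetric case treated in Propositions \ref{prop4} and \ref{prop10}.
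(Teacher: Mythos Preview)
Your proposal is correct and follows essentially the same approach as the paper: the paper states the proposition without proof (note the \qed), but Remark \ref{rem12} explains that it follows from Proposition \ref{prop2} by checking that under that isomorphism the elements alternating in $G^n$ correspond exactly to those landing in $\innAlt(G^n,F)$, which is precisely what you spell out.
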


\begin{rem}
\label{rem12} We could show Propositions \ref{prop10} and
\ref{prop11} using the isomorphism given in Proposition \ref{prop2}.
Indeed, under that isomorphism the image of an element of
$\Mult(H_1\times\dots\times H_r\times G^n,F)^{S_n}$ lies inside the
subgroup\\ $\Mult(H_1\times\dots\times H_r,\innSym(G^n,F))$ of
$\Mult(H_1\times\dots\times H_r,\innMult(G^n,F))$ and vice versa.
This is what we explained in Remark \ref{rem4}. The same argument
works in the alternating case.\xqed{\lozenge}\end{rem}

Let $G,H$ and $F$ be as above and assume that we are in a situation
where any multilinear morphism from $G^n$ to $F$ is symmetric. Then
we have in particular
$$\Mult(G^n,\innHom(H,F))=\Mult(G^n,\innHom(H,F))^{S_n} \qquad (\blacktriangle)$$
According to Proposition \ref{prop2} we have an isomorphism
$$\Mult(G^n\times H,F)\cong \Mult(G^n,\innHom(H,F)),$$
and referring again to Remark \ref{rem4}, we obtain the following
commutative diagram:
$$\xymatrix{
\Mult(G^n\times H,F)^{S_n}\ar[r]^{\cong\text{\,\quad
}}\ar@{^{ (}->}[d]_i&\Mult(G^n,\innHom(H,F))^{S_n}\ar@{^{ (}->}[d]^j\\
\Mult(G^n\times H,F)^{\text{\quad}}\ar[r]^{\cong\text{\,\quad
}}&\Mult(G^n,\innHom(H,F)).}$$ We deduce from $(\blacktriangle)$
that $j$ is an equality and therefore $i$ is an equality as well.
Another use of Proposition \ref{prop2} and Proposition \ref{prop4}
gives the following commutative diagram:

$$\xymatrix{
\Mult(G^n\times
H,F)^{S_n}\ar@{^{ (}->}[d]_i^{\shortparallel}\ar[r]^{\cong}_{\ref{prop4}}&\Hom(H,\innSym(G^n,F))\ar@{^{ (}->}[d]_u\\
\Mult(G^n\times
H,F)^{\text{\quad}}\ar[r]^{\cong}_{\ref{prop2}}&\Hom(H,\innMult(G^n,F)).}$$
As we have seen, $i$ is an equality, which implies that $u$ is also
an equality. Since this is true for any commutative group scheme
$H$, it follows that $\innSym(G^n,H)=\innMult(G^n,F)$.

Suppose that the base scheme $S$ is defined over $\BF_p$ with $p>2$
and that $T$ is any $S$-scheme. Take an alternating multilinear
morphism $\psi:G_T^n\to F_T$, i.e. an element of
$\innAlt(G^n,F)(T)$. Since $\innSym(G^n,H)=\innMult(G^n,F)$ and so
$\innSym(G^n,H)(T)=\innMult(G^n,F)(T)$, this $\psi$ should be
symmetric. As it is also alternating and the characteristic is odd,
we deduce that $\psi$ is the zero morphism. Recapitulating, we have:

\begin{prop}
\label{prop5}
Suppose that $G$ and $F$ are commutative group schemes
over a base scheme $S$. Suppose also that any multilinear morphism
from $G^n$ to any commutative group scheme is symmetric. Then if
$\,\innMult(G^n,F)$ exists, we have
\begin{itemize}
\item $\innSym(G^n,F)=\innMult(G^n,F)$ and
\item $\innAlt(G^n,F)=0$ if $S$ is defined over $\BF_p$ with $p>2$.\qed
\end{itemize}
\end{prop}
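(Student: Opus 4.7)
The plan is to combine the hypothesis with the chain of natural isomorphisms from Propositions \ref{prop2} and \ref{prop4}, exploiting the $S_n$-equivariance recorded in Remark \ref{rem4}. For the first bullet point, I would fix an auxiliary commutative group scheme $H$ over $S$ and apply the hypothesis to the codomain $\innHom(H,F)$. This immediately yields
$$\Mult(G^n,\innHom(H,F))=\Mult(G^n,\innHom(H,F))^{S_n}.$$

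Next, I would feed this into Proposition \ref{prop2}, which, via the $S_n$-equivariant identification pointed out in Remark \ref{rem4}, translates the displayed equality into $\Mult(G^n\times H,F)=\Mult(G^n\times H,F)^{S_n}$. I would then stack Proposition \ref{prop4} on the symmetric side against Proposition \ref{prop2} on the full multilinear side, fitting them into a commutative square whose two horizontal arrows are isomorphisms and whose left vertical arrow is the just-obtained equality. The right vertical arrow is the natural inclusion $\Hom(H,\innSym(G^n,F))\hookrightarrow\Hom(H,\innMult(G^n,F))$, which is therefore also an equality. Since this holds for every $H$, the Yoneda lemma upgrades the closed inclusion $\innSym(G^n,F)\hookrightarrow\innMult(G^n,F)$ of Remark \ref{rem3}(2) to an isomorphism, which is the first bullet point.

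For the second bullet point, I would pass to the functor of points: for any $S$-scheme $T$ and any $\psi\in\innAlt(G^n,F)(T)$, the first part already forces $\psi$ to be simultaneously symmetric. Since alternating implies antisymmetric by Remark \ref{rem2}(4), transposing any two factors of $G^n$ both fixes $\psi$ and sends $\psi$ to $-\psi$, so $2\psi=0$. Under the assumption $p>2$, the integer $2$ is invertible on every $\BF_p$-scheme, forcing $\psi=0$; hence $\innAlt(G^n,F)(T)=0$ for all $T$, which is what we want.

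The step I expect to be most delicate is verifying the $S_n$-equivariance of the isomorphisms used when translating between $\Mult(G^n\times H,F)$ and $\Mult(G^n,\innHom(H,F))$, and likewise between the symmetric and full multilinear formulations of Propositions \ref{prop4} and \ref{prop2}. Once that equivariance is in hand (courtesy of Remark \ref{rem4}), the rest of the argument is essentially formal; without it, the translation could silently scramble the symmetric fixed-point condition and the reduction to Yoneda would collapse.
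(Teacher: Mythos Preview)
Your proposal is correct and follows essentially the same route as the paper's own argument: apply the hypothesis with codomain $\innHom(H,F)$, transport the resulting equality through the $S_n$-equivariant isomorphism of Proposition~\ref{prop2} (Remark~\ref{rem4}), then compare with Proposition~\ref{prop4} in a commutative square to force $\Hom(H,\innSym(G^n,F))=\Hom(H,\innMult(G^n,F))$ for all $H$, and conclude by Yoneda; the alternating part is handled identically via ``symmetric $+$ antisymmetric $\Rightarrow 2\psi=0$'' in odd characteristic. Your flagged delicate step---the $S_n$-equivariance of the adjunction isomorphisms---is precisely the content of Remark~\ref{rem4}, on which the paper's proof also rests.
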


\begin{prop}
\label{ex2}
\emph{Let $n\geq 2$, then there is an isomorphism
$$\innMult(\underbrace{\alpha_p\times\dotsb\times\alpha_p}_{n \text{\
times}},\BG_m)\cong\BG_a.$$}
\end{prop}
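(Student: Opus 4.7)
My plan is to compute the functor $R \mapsto \Mult_R(\alpha_{p,R}^n, \BG_{m,R})$ directly on $k$-algebras and identify it with $\BG_a$. Dualizing, a multilinear morphism $\alpha_{p,R}^n \to \BG_{m,R}$ corresponds to a unit $f \in R[x_1,\dots,x_n]/(x_1^p,\dots,x_n^p)$ which is group-like in each $x_i$ separately with respect to the coproduct $x_i \mapsto x_i \otimes 1 + 1 \otimes x_i$, and which satisfies $f|_{x_i=0} = 1$ for every $i$.

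First I would handle the one-variable case as a warm-up: writing $f(x) = \sum_{i<p} c_i x^i$ with $c_0 = 1$ and comparing coefficients in $f(x)f(x') = f(x+x')$ in $R[x,x']/(x^p, x'^p)$ yields $c_i = a^i/i!$ for a unique $a \in R$, where the constraint $a^p = 0$ emerges from the vanishing of the ``boundary'' terms of total degree $\geq p$ in $R[x,x']/(x^p,x'^p)$. Thus $f = \exp(ax) := \sum_{i=0}^{p-1} a^i x^i/i!$ and $\Hom_R(\alpha_p, \BG_m) \cong \{a \in R : a^p = 0\}$. Applying this with base ring $R' := R[x_2,\dots,x_n]/(x_2^p,\dots,x_n^p)$ and group variable $x_1$, group-likeness of $f$ in $x_1$ forces $f = \exp(a(x_2,\dots,x_n)\, x_1)$ for some $a \in R'$ with $a^p = 0$, while the conditions $f|_{x_i=0} = 1$ for $i \geq 2$ become $a|_{x_i=0} = 0$.

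The heart of the argument is exploiting group-likeness in each $x_j$ for $j \geq 2$. Since $a \mapsto \exp(ax_1)$ is injective on elements satisfying $a^p = 0$, the equation $\exp(a(x_2+x_2',\dots)\,x_1) = \exp(a(x_2,\dots)\,x_1)\cdot\exp(a(x_2',\dots)\,x_1)$ translates into additivity of $a$ in $x_j$. In characteristic $p$ the only additive polynomials of degree at most $p-1$ in one variable are the linear ones, because the binomial coefficients $\binom{i}{k}$ with $0 < k < i < p$ are nonzero in $\BF_p$; hence $a$ is linear in each $x_j$ separately, and combined with $a|_{x_j=0} = 0$ this yields $a = b\, x_2 x_3 \cdots x_n$ for a unique $b \in R$ (the condition $a^p = 0$ is then automatic). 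Conversely, each $f_b := \exp(b\, x_1 x_2 \cdots x_n) = \sum_{k=0}^{p-1} b^k (x_1 \cdots x_n)^k/k!$ satisfies all the required group-likeness conditions, and the assignment $b \mapsto f_b$ is additive. This produces a natural isomorphism $\BG_a(R) \cong \Mult_R(\alpha_{p,R}^n, \BG_{m,R})$ functorial in $R$, whence $\innMult(\alpha_p^n, \BG_m) \cong \BG_a$.

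The main technical obstacle will be the passage from the abstract additivity constraint on $a$ to the concrete shape $a = b\, x_2 \cdots x_n$: one must invert $\exp$ on nilpotent exponents to extract an additive condition from a multiplicative one, then use that all partial degrees are strictly less than $p$ to upgrade ``additive polynomial'' to ``linear monomial''. An alternative route would iterate Lemma \ref{lem2}, reducing via $\innHom(\alpha_p, \BG_m) \cong \alpha_p$ to $\innMult(\alpha_p^{n-1}, \alpha_p)$ and then further to computations involving $\innHom(\alpha_p, \alpha_p)$ and $\innHom(\alpha_p, \BG_a)$, but this just repackages the same calculation.
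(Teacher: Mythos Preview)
Your proof is correct and takes a genuinely different route from the paper's. The paper argues by iterating the adjunction of Lemma~\ref{lem2}: it first computes $\innHom(\alpha_p,\BG_a)\cong\BG_a$ by a short Hopf-algebra calculation, observes that every $\alpha_{p,R}\to\BG_{a,R}$ factors through $\alpha_{p,R}$ so that $\innHom(\alpha_p,\alpha_p)\cong\BG_a$, uses Cartier duality $\innHom(\alpha_p,\BG_m)\cong\alpha_p^*\cong\alpha_p$, and then unwinds
\[
\innMult(\alpha_p^{n},\BG_m)\cong\innMult(\alpha_p^{n-1},\alpha_p)\cong\innMult(\alpha_p^{n-1},\BG_a)\cong\cdots\cong\innHom(\alpha_p,\BG_a)\cong\BG_a.
\]
This is precisely the ``alternative route'' you sketch in your last paragraph. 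Your main argument instead identifies $\Mult_R(\alpha_{p,R}^n,\BG_{m,R})$ directly with the set of separately group-like units in $R[x_1,\dots,x_n]/(x_1^p,\dots,x_n^p)$ and solves for them explicitly. The payoff of your approach is that it produces, for free, the explicit universal multilinear morphism $f_b=\sum_{k<p}(b\,x_1\cdots x_n)^k/k!$; in the paper this formula only appears later (Examples~\ref{ex6} and~\ref{ex7}) after a separate chase through the chain of isomorphisms. The paper's approach, on the other hand, is more modular and makes the reduction structure transparent, which is convenient when the same adjunction is reused elsewhere (e.g.\ Propositions~\ref{ex3} and~\ref{ex4}).
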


\begin{proof}[\textsc{Proof}. ]
We first show that $\innHom(\alpha_p,\BG_a)\cong\BG_a$. Indeed, let
$R$ be a $k$-algebra and $\phi:k[X]\otimes_kR=R[X]\to
k[X]/(X^p)\otimes_kR=R[X]/(X^p)$ be an $R$-Hopf algebra
homomorphism, write $x$ for the image of $X$ in $R[X]/(X^p)$, and
let $\phi(X)=a_0+a_1x+\dotsc+a_{p-1}x^{p-1}\in R[X]/(X^p)$. Then
being a Hopf algebra homomorphism amounts to saying that
$$\sum_{i=0}^{p-1}a_i(1\otimes x^i+x^i\otimes1)=\sum_{i=0}^{p-1}a_i(1\otimes x+x\otimes1)^i.$$ Since $\{x^j\otimes
x^l\}_{j,l=0}^{p-1}$ form an $R$-basis of
$R[X]/(X^p)\otimes_kR[X]/(X^p)$, $a_i$ should be zero for $i\neq1$.
We have therefore $\phi(X)=a_1x$ for an element $a_1\in R$.
Consequently, $R$-Hopf algebra homomorphisms from $R[X]$ to
$R[X]/(X^p)$ are of the form $X\mapsto r\cdot x$, and any such
morphism is an $R$-Hopf algebra homomorphism. Moreover, the sum of
two such homomorphisms $X\mapsto r\cdot x$ and $X\mapsto s\cdot x$
is $X\mapsto (r+s)\cdot x$. The parameter $r$ thus defines an
isomorphism $\innHom(\alpha_p,\BG_a)\cong\BG_a$.\\

Secondly, since $(rx)^p=0$ in $R[X]/(X^p)$, we find that any
homomorphism $\alpha_{p,R}\to\BG_{a,R}$ factors through
$\alpha_{p,R}\subset\BG_{a,R}$. Therefore
$$\innHom(\alpha_p,\alpha_p)=\innHom(\alpha_p,\BG_a)\cong\BG_a.$$

We also have canonical isomorphisms, $\alpha_p^*\cong\alpha_p$ and
$\innHom(\alpha_p,\BG_m)\cong\alpha_p^*$.\\
Finally, putting all this together and using Lemma \ref{lem2}, we
obtain
$$\innMult(\alpha_p^{n},\BG_m)\cong\innMult(\alpha_p^{n-1},\innHom(\alpha_p,\BG_m))\cong\innMult(\alpha_p^{n-1},\alpha_p^*)\cong$$
$$\innMult(\alpha_p^{n-1},\alpha_p)\cong\innMult(\alpha_p^{n-1},\innHom(\alpha_p,\alpha_p))\cong\innMult(\alpha_p^{n-1},\BG_a)\cong$$
$$\innMult(\alpha_p^{n-2},\innHom(\alpha_p,\BG_a))\cong\innMult(\alpha_p^{n-2},\BG_a)\cong\dotsb\cong\innHom(\alpha_p,\BG_a)\cong\BG_a.$$
\end{proof}

\begin{prop}
\label{ex3}
\emph{Let $n, \, m\geq 1$  be natural numbers, we have
$\innHom(\alpha_{p^n},\alpha_{p^m}) \cong $}

\begin{itemize}
\item $\BG_a^n=\overbrace{\BG_a\times\dotsb\times\BG_a}^{n \text{\ \emph{ times}}}$  \emph{if} $m\geq n$
\item $\alpha_{p^m}^{n-m}\times\BG_a^m=\underbrace{\alpha_{p^m}\times\dotsb\times\alpha_{p^m}}_{n-m \text{\
\emph{times}}}\times\underbrace{\BG_a\times\dotsb\times\BG_a}_{m
\text{\ \emph{times}}}$ \emph{if} $m<n.$
\end{itemize}
\end{prop}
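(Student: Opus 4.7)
The plan is to reduce the statement to a classification of primitive elements of $R[X]/(X^{p^n})$ subject to a nilpotency constraint. For any $k$-algebra $R$, write $\alpha_{p^n,R}=\Spec R[X]/(X^{p^n})$ and $\alpha_{p^m,R}=\Spec R[Y]/(Y^{p^m})$, both equipped with the additive Hopf algebra structure. An element of $\innHom(\alpha_{p^n},\alpha_{p^m})(R)$ is an $R$-Hopf algebra homomorphism $\phi: R[Y]/(Y^{p^m}) \to R[X]/(X^{p^n})$, and such a $\phi$ is determined by the single element $f := \phi(Y)$. The Hopf condition on $\phi$ is equivalent to (i) $f$ is primitive, i.e., $\Delta(f) = f\otimes 1 + 1\otimes f$, together with (ii) $f^{p^m} = 0$ in $R[X]/(X^{p^n})$, needed for $\phi$ to descend modulo $Y^{p^m}$.

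Next I would classify primitive elements. Writing $f = \sum_{j=1}^{p^n-1} a_j X^j$ (with $a_0=0$ forced by the counit) and expanding $\Delta(X^j)$, primitivity reduces to $a_j\binom{j}{i} = 0$ for all $0<i<j$. By Lucas' theorem, $\binom{j}{i}$ is nonzero modulo $p$ for some such $i$ unless $j$ is a power of $p$, so $a_j=0$ unless $j=p^l$ with $0\leq l\leq n-1$. Hence the primitive elements form a free $R$-module of rank $n$ with basis $X, X^p, \dotsc, X^{p^{n-1}}$.

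The third step is to impose condition (ii). For $f = \sum_{l=0}^{n-1} a_l X^{p^l}$, additivity of $p^m$-th powers in characteristic $p$ gives $f^{p^m} = \sum_{l=0}^{n-1} a_l^{p^m} X^{p^{l+m}}$. The summand vanishes automatically in $R[X]/(X^{p^n})$ when $l+m\geq n$, while for $l<n-m$ it survives and vanishes if and only if $a_l^{p^m}=0$, i.e., $a_l\in \alpha_{p^m}(R)$. Therefore $\innHom(\alpha_{p^n},\alpha_{p^m})(R)$ is identified with the set of tuples $(a_0,\dotsc,a_{n-1})$ where $a_l\in\alpha_{p^m}(R)$ for $0\leq l<\max(0,n-m)$ and $a_l\in\BG_a(R)$ otherwise. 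This yields $\BG_a^n$ when $m\geq n$ and $\alpha_{p^m}^{n-m}\times\BG_a^m$ when $m<n$.

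Finally, to upgrade this pointwise bijection to an isomorphism of group schemes one checks that the abelian group structure on $\Hom_R(\alpha_{p^n,R},\alpha_{p^m,R})$, given by convolution, sends primitive $f_1, f_2$ to $f_1+f_2$; hence the identification is additive and functorial in $R$. The only real input is the classification of primitive elements via Lucas' theorem; the rest is bookkeeping built on the Frobenius identity $(a+b)^p=a^p+b^p$, so I expect no serious obstacle.
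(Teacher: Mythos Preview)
Your proposal is correct and follows essentially the same route as the paper's proof: both reduce the description of $\innHom(\alpha_{p^n},\alpha_{p^m})(R)$ to classifying primitive elements of $R[X]/(X^{p^n})$ via Lucas' theorem, then impose the nilpotency condition $f^{p^m}=0$ to obtain the stated product decomposition, and finally observe that componentwise addition of the parameters matches the group law on $\Hom$. Your write-up is in fact slightly cleaner in that you explicitly invoke the counit to kill the constant term and spell out why the bijection is additive.
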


\begin{proof}[\textsc{Proof}. ]
Let $R$ be a $k$-algebra and $\phi:\alpha_{p^n,R}\to \alpha_{p^m,R}$
be a homomorphism. Then $\phi$ corresponds to a Hopf algebra
homomorphism $R[X]/(X^{p^m})\to R[X]/(X^{p^n})$ which we denote
again by $\phi$. Let $x$ be the class of $X$ in $R[X]/(X^{p^m})$ and
$R[X]/(X^{p^n})$. Write
$\phi(x)=a_0+a_1x+\dotsb+a_{p^n-1}x^{p^n-1}.$ This element of
$R[X]/(X^{p^n})$ should fulfill two conditions, namely
$\phi(x)^{p^m}=0$ and
$$\Delta(\phi(x))=(\phi\otimes\phi)(\Delta(x))=1\otimes\phi(x)+\phi(x)\otimes1.$$
We first exploit the second condition, which gives
$$\Delta(\sum_{i=0}^{p^n-1}a_ix^i)=1\otimes(\sum_{i=0}^{p^n-1}a_ix^i)+(\sum_{i=0}^{p^n-1}a_ix^i)\otimes1.$$ We
thus have
$$\sum_{i=0}^{p^n-1}a_i\Delta(x)^i=\sum_{i=0}^{p^n-1}a_i(1\otimes x+x\otimes1)^i=\sum_{i=0}^{p^n-1}a_i(1\otimes x^i+x^i\otimes1).$$
Since the elements $x^i\otimes x^j$ are linearly independent, we
must have $a_i(1\otimes x+x\otimes 1)^i=a_i(1\otimes
x^i+x^i\otimes1)$ for all $i$, i.e., we have
$$a_i(\sum_{j=0}^{i}\binom{i}{j}x^j\otimes x^{i-j}-1\otimes x^i+x^i\otimes1)=a_i\sum_{j=1}^{i-1}\binom{i}{j}x^j\otimes x^{i-j}=0.$$
If $i$ is not a power of $p$, then by Lucas theorem there is a $j$
with $\binom{i}{j}$ not divisible by $p$ and therefore, from the
linear independence of $x^j\otimes x^{i-j}$ we deduce that $a_i=0$
for these $i$'s, and we can write
$$\phi(x)=a_1x+a_px^p+a_{p^2}x^{p^2}+\dotsb+a_{p^{n-1}}x^{p^{n-1}}.$$ Consider now the first condition. If $m\geq n$,
 this condition is automatically satisfied and any $n$-tuple $(a_1,a_p,\dotsc,a_{p^{n-1}})$ gives rise to a unique homomorphism
 $\alpha_{p^n}\to \alpha_{p^m}$, and one sees easily that the component-wise addition of these $n$-tuples corresponds to the addition of
 homomorphisms $\alpha_{p^n}\to \alpha_{p^m}$. This gives the isomorphism $$\innHom(\alpha_{p^n},\alpha_{p^m})
 \cong \BG_a^n.$$ If $m<n$, the first condition implies that
 $$(a_1x+a_px^p+a_{p^2}x^{p^2}+\dotsb+a_{p^{n-1}}x^{p^{n-1}})^{p^m}=0,$$i.e., $$a_1^{p^m}x^{p^m}+a_p^{p^m}x^{p^{m+1}}+a_{p^2}^{p^m}x^{p^{m+2}}
 +\dotsb+a_{p^{n-1}}^{p^m}x^{p^{m+n-1}}=0.$$ For indices $i$ with $m+i\geq n$ we have $x^{p^{m+i}}=0$, therefore we must have
 $$a_1^{p^m}x^{p^m}+a_p^{p^m}x^{p^{m+1}}+a_{p^2}^{p^m}x^{p^{m+2}}
 +\dotsb+a_{p^{n-m-1}}^{p^m}x^{p^{n-1}}=0$$which implies that $a_{p^i}^{p^m}=0$
for all $0\leq i\leq m-n-1$ and there is no condition on other
$a_i$'s. Consequently, the $n$-tuples $(a_1,a_p,\dotsc,a_{p^{n-1}})$
belong to the group $\alpha_{p^m}^{n-m}(R)\times\BG_a^n(R)$. Again,
the component-wise addition of these tuples corresponds to the
addition in $\Hom(\alpha_{p^n},\alpha_{p^m})$, and therefore we have
an isomorphism
$$\innHom(\alpha_{p^n},\alpha_{p^m}) \cong \alpha_{p^m}^{n-m}\times\BG_a^m.$$\\
\end{proof}

\begin{rem}
\label{rem6}
\begin{itemize}
\item[1)] In both cases, any $n$-tuple $(a_1,\dotsc,a_n)$ with
$a_i\in \BG_a(R)$ (in the second case, the first $m$ entries are in
fact in $\alpha_{p^m}^{n-m}(R)$) defines a Hopf algebra homomorphism
$\phi:R[X]/(X^{p^m})\to R[X]/(X^{p^n})$ with
$$\phi(x)=a_1x+a_px^p+a_{p^2}x^{p^2}+\dotsb+a_{p^{n-1}}x^{p^{n-1}}.$$
It then follows that the homomorphism
$\alpha_{p^n,R}\to\alpha_{p^m,R}$ corresponding to $\phi$, sends
$s\in\alpha_{p^n}(S)$ to
$$a_1s+a_ps^p+a_{p^2}s^{p^2}+\dotsb+a_{p^{n-1}}s^{p^{n-1}}\in\alpha_{p^m}(S)$$
for any $R$-algebra $S$.
\item[2)] The same arguments as in the first case of the example, show that for any positive integer $n$, there is
an isomorphism $\innHom(\alpha_{p^n},\BG_a)\cong\BG_a^n$. And as in
the first part of the remark, this isomorphism sends any $n$-tuple
$(a_1\dotsc,a_n)\in\BG_a(R)^n$ to the homomorphism
$\alpha_{p^n,R}\to\BG_{a,R}$ that sends an element
$s\in\alpha_{p^n}(S)$ to the element
$$a_1s+a_ps^p+a_{p^2}s^{p^2}+\dotsb+a_{p^{n-1}}s^{p^{n-1}}\in\BG_a(S)$$
for any $R$-algebra $S$.\xqed{\lozenge}
\end{itemize}
\end{rem}

Now, we can go further and show:

\begin{prop}
\label{ex4}
For any positive
integers $n_1,n_2,\dotsc,n_r$ we have an isomorphism
$$\innMult(\alpha_{p^{n_1}}\times\alpha_{p^{n_2}}\times\dotsb\times\alpha_{p^{n_r}},\BG_a)\cong\BG_a^{n_1n_2\dotsb
n_r}$$given by the following formula: An element
$$\overrightarrow{a}_r:=(a_{i_1,i_2,\dotsc,i_r})\in\BG_a(R)^{n_1n_2\dotsb
n_r}$$ corresponds to the homomorphism
$$\phi_{\overrightarrow{a}_r}:\alpha_{p^{n_1},R}\times\alpha_{p^{n_2},R}\times\dotsb\times\alpha_{p^{n_r},R}\to\BG_{a,R}$$
which sends an $r$-tuple
$$(s_1,s_2\dotsc,s_r)\in\alpha_{p^{n_1}}(S)\times\alpha_{p^{n_2}}(S)\times\dotsb\times\alpha_{p^{n_r}}(S)$$
to the element
$$\sum_{i_1,i_2,\dotsc,i_r}a_{i_1,i_2,\dotsc,i_r}s_1^{p^{i_1}}s_2^{p^{i_2}}\dotsb
s_r^{p^{i_r}}\in\BG_a(S),\quad i_j\in\{0,1,\dotsc,n_j-1\}\ \forall
1\leq j\leq r$$ for any $R$-algebra $S$.
\end{prop}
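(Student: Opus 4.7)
The plan is to proceed by induction on $r$. For $r=1$, the statement reduces to the explicit isomorphism $\innHom(\alpha_{p^{n_1}},\BG_a)\cong\BG_a^{n_1}$ together with the formula $s_1\mapsto\sum_{i_1}a_{i_1}s_1^{p^{i_1}}$, which is precisely the content of Remark \ref{rem6}(2).

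For the inductive step, assume the result for $r-1$. I chain together three canonical isomorphisms. First, Lemma \ref{lem2} yields
$$\innMult(\alpha_{p^{n_1}}\times\dotsb\times\alpha_{p^{n_r}},\BG_a)\cong\innMult(\alpha_{p^{n_1}}\times\dotsb\times\alpha_{p^{n_{r-1}}},\innHom(\alpha_{p^{n_r}},\BG_a)).$$
Second, Remark \ref{rem6}(2) identifies the inner argument as $\BG_a^{n_r}$. Third, since a multilinear morphism into a product is the same as a tuple of multilinear morphisms (immediately verified on $T$-points, since the target is an abelian group), the functor $\innMult(\alpha_{p^{n_1}}\times\dotsb\times\alpha_{p^{n_{r-1}}},-)$ commutes with finite products, giving
$$\innMult(\alpha_{p^{n_1}}\times\dotsb\times\alpha_{p^{n_{r-1}}},\BG_a^{n_r})\cong\innMult(\alpha_{p^{n_1}}\times\dotsb\times\alpha_{p^{n_{r-1}}},\BG_a)^{n_r}.$$
The inductive hypothesis then identifies the right side with $(\BG_a^{n_1\dotsb n_{r-1}})^{n_r}\cong\BG_a^{n_1\dotsb n_r}$.

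To verify the explicit formula, I regard an element $\overrightarrow{a}_r=(a_{i_1,\dotsc,i_r})\in\BG_a(R)^{n_1\dotsb n_r}$ as an $n_r$-tuple whose $i_r$-th entry is $(a_{i_1,\dotsc,i_{r-1},i_r})_{i_1,\dotsc,i_{r-1}}\in\BG_a(R)^{n_1\dotsb n_{r-1}}$. By the inductive hypothesis, this $i_r$-th entry corresponds to the multilinear morphism sending $(s_1,\dotsc,s_{r-1})$ to $\sum_{i_1,\dotsc,i_{r-1}}a_{i_1,\dotsc,i_r}s_1^{p^{i_1}}\dotsb s_{r-1}^{p^{i_{r-1}}}$. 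Assembling these into a morphism to $\BG_{a,R}^{n_r}$, then reinterpreting the codomain as $\innHom(\alpha_{p^{n_r}},\BG_a)_R$ via the explicit formula of Remark \ref{rem6}(2), namely $(b_{i_r})\mapsto\bigl(s_r\mapsto\sum_{i_r}b_{i_r}s_r^{p^{i_r}}\bigr)$, and finally unfolding the Lemma \ref{lem1} adjunction, produces precisely the map $(s_1,\dotsc,s_r)\mapsto\sum_{i_1,\dotsc,i_r}a_{i_1,\dotsc,i_r}s_1^{p^{i_1}}\dotsb s_r^{p^{i_r}}$ claimed in the statement.

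The main obstacle is purely notational: keeping the multi-index conventions consistent through the three chained isomorphisms. Conceptually there is no new content beyond Lemma \ref{lem2} and the base case $r=1$ from Remark \ref{rem6}(2); in particular, no further Hopf-algebraic analysis (of the type used to derive which monomials $x^i$ survive comultiplication in Proposition \ref{ex3}) is needed, since all that work has been absorbed into the base case.
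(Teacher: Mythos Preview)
Your proof is correct and follows essentially the same approach as the paper. The only cosmetic difference is that the paper peels off the \emph{first} factor via Proposition~\ref{prop3}, writing $\innMult(\alpha_{p^{n_1}}\times\dotsb\times\alpha_{p^{n_r}},\BG_a)\cong\innHom(\alpha_{p^{n_1}},\innMult(\alpha_{p^{n_2}}\times\dotsb\times\alpha_{p^{n_r}},\BG_a))$ and then applying the inductive hypothesis to the inner $\innMult$, whereas you peel off the \emph{last} factor via Lemma~\ref{lem2}; both routes land on the same product decomposition and the same chase through the explicit formula.
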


\begin{proof}[\textsc{Proof}. ]
We recall that for commutative group schemes $G,H_1$ and $H_2$ we
have
$$\Hom(G,H_1\times H_2)\cong\Hom(G,H_1)\times\Hom(G,H_2)$$ and
therefore, we also have the underlined version
$$\innHom(G,H_1\times H_2)\cong\innHom(G,H_1)\times\innHom(G,H_2).$$
We show the statement by induction on $r$. If $r=1$, then this is
exactly Remark \ref{rem6} point 2). Suppose that $r>1$ and the
statement is true for $r-1$. Let us fix a $k$-algebra $R$, an
$R$-algebra $S$ and an $S$-algebra $T$ for the rest of the proof. We
have
$$\innMult(\alpha_{p^{n_1}}\times\alpha_{p^{n_2}}\times\dotsb\times\alpha_{p^{n_r}},\BG_a)
\cong\innHom(\alpha_{p^{n_1}},\innMult(\alpha_{p^{n_2}}\times\dotsb\times\alpha_{p^{n_r}},\BG_a))$$
by Proposition \ref{prop3}. By the induction hypothesis, we have
$$\innMult(\alpha_{p^{n_2}}\times\dotsb\times\alpha_{p^{n_r}},\BG_a)\cong\BG_a^{n_2\dotsb
n_r}$$ and under this isomorphism an element
$\overrightarrow{a}_{r-1}=(a_{i_2,\dotsc,i_r})\in\BG_a(R)^{n_2\dotsb
n_r}$ is sent to the homomorphism
$\phi_{\overrightarrow{a}_{r-1}}:\alpha_{p^{n_2},R}\times\dotsb\times\alpha_{p^{n_r},R}\to\BG_{a,R}$
defined above. Combining this isomorphism with the last one, we
obtain:
$$\innMult(\alpha_{p^{n_1}}\times\alpha_{p^{n_2}}\times\dotsb\times\alpha_{p^{n_r}},\BG_a)
\cong\innHom(\alpha_{p^{n_1}},\BG_a^{n_2\dotsb
n_r})\cong\innHom(\alpha_{p^{n_1}},\BG_a)^{n_2\dotsb n_r}.$$ By
Remark \ref{rem6} 2), $\innHom(\alpha_{p^{n_1}},\BG_a)^{n_2\dotsb
n_r}\cong \BG_a^{n_1n_2\dots n_r}$. Now we consider the image of an
element
$\overrightarrow{a}_r=(a_{i_1,i_2,\dotsc,i_r})\in\BG_a(R)^{n_1n_2\dotsb
n_r}$ under these isomorphisms. The isomorphism
$(\BG_a^{n_1})^{n_2\dotsb n_r}\cong \BG_a^{n_1\dotsb n_r}$ (it is in
fact a rearranging of entries) maps this element to
$(A_{i_2,\dotsc,i_r})$ where each
$$A_{i_2,\dotsc,i_r}=(a_{0,i_2,\dotsc,i_r},a_{1,i_2,\dotsc,i_r},\dotsc,a_{n_1-1,i_2,\dotsc,i_r})$$
is a vector in $\BG_a(R)^{n_1}$. Under the isomorphism
$\innHom(\alpha_{p^{n_1}},\BG_a)\cong \BG_a^{n_1}$, each vector
$A_{i_2,\dotsc,i_r}$ is sent to the homomorphism
$\phi_{i_2,\dotsc,i_r}:\alpha_{p^{n_1},R}\to\BG_{a,R}$ defined by
the vector $A_{i_2,\dotsc,i_r}$, as in the statement of Remark
\ref{rem6} 2), i.e.,
$$\phi_{i_2,\dotsc,i_r}(s)=a_{0,i_2,\dotsc,i_r}+a_{1,i_2,\dotsc,i_r}s^p+\dotsb+a_{n_1-1,i_2,\dotsc,i_r}s^{p^{n_1-1}}$$
for any $s\in \alpha_{p^{n_1}}(S)$. So we have an element
$(\phi_{i_2,\dotsc,i_r})\in\Hom(\alpha_{p^{n_1},R},\BG_{a,R})^{n_2\dotsb
n_r}$. Under the isomorphism
$\Hom(\alpha_{p^{n_1},R},\BG_{a,R}^{n_2\dotsb n_r})\cong
\Hom(\alpha_{p^{n_1},R},\BG_{a,R})^{n_2\dotsb n_r}$, this element
goes to $$\phi:\alpha_{p^{n_1},R}\to\BG_{a,R}^{n_2\dotsb n_r},\quad
s\mapsto (\phi_{i_2,\dotsc,i_r}(s))\in\BG_a(S)^{n_2\dotsb n_r}$$
where $s$ is an element of $\alpha_{p^{n_1}}(S)$. The element
$(\phi_{i_2,\dotsc,i_r}(s))$ corresponds by the induction hypothesis
to the multilinear morphism
$\phi':\alpha_{p^{n_2},S}\times\dotsb\times\alpha_{p^{n_r},S}\to\BG_{a,S}$
which sends an $r$-tuple
$(t_2,\dotsc,t_r)\in\alpha_{p^{n_2}}(T)\times\dotsb\times\alpha_{p^{n_r}}(T)$
to the element
$\sum_{i_2,\dotsc,i_r}\phi_{i_2,\dotsc,i_r}(s)t_2^{p^{i_2}}\dotsb
t_r^{p^{i_r}}=$
$$\sum_{i_2,\dotsc,i_r}\sum_{i_1=0}^{n_1-1}a_{i_1,i_2\dotsc,i_r}s^{p^{i_1}}t_2^{p^{i_2}}\dotsb
t_r^{p^{i_r}}=\sum_{i_1,\dotsc,i_r}a_{i_1,i_2\dotsc,i_r}s^{p^{n_1}}t_2^{p^{i_2}}\dotsb
t_r^{p^{i_r}}.$$ It follows from the isomorphism
$$\innMult(\alpha_{p^{n_1}}\times\alpha_{p^{n_2}}\times\dotsb\times\alpha_{p^{n_r}},\BG_a)
\cong\innHom(\alpha_{p^{n_1}},\innMult(\alpha_{p^{n_2}}\times\dotsb\times\alpha_{p^{n_r}},\BG_a))$$
that $\phi'$ is sent to
$\phi_{\overrightarrow{a}_r}:\alpha_{p^{n_1},R}\times\alpha_{p^{n_2},R}\times\dotsb\times\alpha_{p^{n_r},R}\to\BG_{a,R}$
as in the statement of the example. The proof is thus achieved.
\end{proof}

\begin{rem}
\label{rem7}
\begin{itemize}
\item[1)] It is easy to check that the corresponding Hopf algebra
homomorphism defined by
$\phi_{\overrightarrow{a}_r}:\alpha_{p^{n_1},R}\times\alpha_{p^{n_2},R}\times\dotsb\times\alpha_{p^{n_r},R}\to\BG_{a,R}$
is the $R$-homomorphism $$\phi^{\sharp}:R[X]\to
R[Y_1]/(Y_1^{p^{n_1}})\otimes\dotsb\otimes R[Y_r]/(Y_r^{p^{n_r}})$$
that sends $X$ to
$\sum_{i_1,\dotsc,i_r}a_{i_1\dotsc,i_r}y_1^{p^{i_1}}\otimes\dotsb\otimes
y_r^{p^{i_r}}$ where $y_j$ is the is the image of $Y_j$ in
$R[Y_j]/(Y_j^{p^{n_j}}).$
\item[2)] With the same methods as in the proof of Proposition \ref{ex4} and the
second part of Proposition \ref{ex3}, one can determine the group scheme
$$\innMult(\alpha_{p^{n_1}}\times\alpha_{p^{n_2}}\times\dotsb\times\alpha_{p^{n_r}},\alpha_{p^l}).$$
It would obviously depend on $l,r$ and $n_i$'s. The formula for the
general case (arbitrary $l,r$ and different $n_i$'s) is rather
complicated and we would not give it here, but for $l=1, r=n$ and
$n_1=n_2\dotsb=n_n=n$ we have
$$\innMult(\alpha_{p^n}^n,\alpha_p)\cong\alpha_p^{(n-1)^n}\times\BG_a^{(n-1)^n}.$$
\end{itemize}
\xqed{\lozenge}\end{rem}

We can use this example in order to calculate other interesting
groups of multilinear morphisms.

\begin{prop}
\label{ex5}
\emph{Let $k$ be a field of characteristic $p$. We have
isomorphisms:
\begin{itemize}
\item $\innSym(\alpha_{p^n}^r,\BG_a)\cong\BG_a^{\binom{n+r-1}{r-1}}$
\item $\innAlt(\alpha_{p^n}^r,\BG_a)\cong \BG_a^{\binom{n}{r}}$ if
$p>2$,
\end{itemize}
with the convention that $\binom{a}{b}=0$ whenever $b > a$.}
\end{prop}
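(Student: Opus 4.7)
The plan is to reduce both isomorphisms to a coordinate computation using the explicit parametrization of $\innMult(\alpha_{p^n}^r,\BG_a)$ furnished by Proposition~\ref{ex4}. Specializing that result to $n_1=\dots=n_r=n$, one has
$$\innMult(\alpha_{p^n}^r,\BG_a)\cong\BG_a^{n^r},$$
where a tuple $\overrightarrow{a}=(a_I)_{I\in\{0,\dots,n-1\}^r}$ corresponds to the multilinear morphism
$$\phi_{\overrightarrow{a}}:(s_1,\dots,s_r)\mapsto\sum_{I}a_{I_1,\dots,I_r}\,s_1^{p^{I_1}}\dotsb s_r^{p^{I_r}}.$$
By Remarks~\ref{rem2}(2) and~\ref{rem3}(2) applied functorially, $\innSym(\alpha_{p^n}^r,\BG_a)$ sits inside $\innMult(\alpha_{p^n}^r,\BG_a)$ as the closed subscheme of $S_r$-fixed points, while $\innAlt(\alpha_{p^n}^r,\BG_a)$ is the closed subscheme carved out by the restriction morphisms to the diagonals $\Delta_{ij}^r$.

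I would begin by determining the induced action of $S_r$ on the coordinates $a_I$. Substituting $(s_{\sigma(1)},\dots,s_{\sigma(r)})$ for $(s_1,\dots,s_r)$ in the formula for $\phi_{\overrightarrow{a}}$ and reindexing the sum shows that $\sigma\in S_r$ acts by
$$(\sigma\cdot\overrightarrow{a})_I=a_{(I_{\sigma(1)},\dots,I_{\sigma(r)})},$$
so the action on $\BG_a^{n^r}$ is the permutation of coordinate factors induced by the permutation of multi-indices. In the symmetric case, the fixed-point subscheme of such a permutation action on affine space is itself an affine space $\BG_a^N$, with $N$ equal to the number of $S_r$-orbits on $\{0,\dots,n-1\}^r$, i.e.\ the number of multisets of size $r$ with entries in $\{0,\dots,n-1\}$, giving the claimed formula.

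For the alternating case, Remark~\ref{rem2}(4) combined with the invertibility of $2$ in odd characteristic shows that alternating is equivalent to antisymmetric; in coordinates this reads $a_{(I_{\sigma(1)},\dots,I_{\sigma(r)})}=\sgn(\sigma)\,a_I$. As soon as $I$ has a repeated entry, the transposition exchanging those entries forces $a_I=-a_I$ and hence $a_I=0$. What remains is one free parameter per $S_r$-orbit of multi-indices with pairwise distinct entries, i.e.\ one per $r$-element subset of $\{0,\dots,n-1\}$, giving $\binom{n}{r}$ parameters (zero when $r>n$) and hence $\innAlt(\alpha_{p^n}^r,\BG_a)\cong\BG_a^{\binom{n}{r}}$.

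The principal technical hurdle is to check that the scheme-theoretic fixed-point locus and the closed subscheme defined by the diagonal-restriction maps really are cut out by these naive linear conditions on the $a_I$'s. This ultimately rests on the linear independence of the monomials $y_1^{p^{I_1}}\otimes\dotsb\otimes y_r^{p^{I_r}}$ for $I\in\{0,\dots,n-1\}^r$ in the tensor product of Hopf algebras of the $\alpha_{p^n}$'s, which is exactly the fact already used in the proof of Proposition~\ref{ex4}; granted that, both the $S_r$-action and the diagonal restrictions become visibly $R$-linear in the $a_I$'s and the conclusion follows by linear algebra.
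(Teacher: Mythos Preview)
Your proposal is correct and follows essentially the same approach as the paper: both reduce to the coordinate description of $\innMult(\alpha_{p^n}^r,\BG_a)\cong\BG_a^{n^r}$ from Proposition~\ref{ex4}, identify the $S_r$-action on the coefficients $a_I$, and count orbits (respectively, orbits of multi-indices with distinct entries) to get $\binom{n+r-1}{r-1}$ and $\binom{n}{r}$. The only cosmetic difference is that the paper works on the Hopf-algebra side via Remark~\ref{rem7}(1), using linear independence of the tensors $y^{p^{i_1}}\otimes\dotsb\otimes y^{p^{i_r}}$ directly, whereas you phrase the same computation on the level of points; the substance of the argument, including the use of ``alternating $=$ antisymmetric'' in odd characteristic, is identical.
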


\begin{proof}[\textsc{Proof}. ]
Let $R$ be a $k$-algebra and $$\phi:\alpha_{p^n,R}^r\to\BG_{a,R}$$
an element of $\Mult(\alpha_{p^n,R}^r,\BG_{a,R})$, which is
isomorphic to $\BG_a(R)^{n^r}$ by Proposition \ref{ex4}. So there exists an
element $(a_{i_1,\dotsc,i_r})\in\BG_a(R)^{n^r}$ that corresponds in
the way explained in that proposition to $\phi$. By the the first part
of Remark \ref{rem7}, the $R$-Hopf algebra homomorphism
corresponding to $\phi$ is the homomorphism
$$\phi^{\sharp}:R[X]\to R[Y]/(Y^{p^n})\otimes\dotsb\otimes
R[Y]/(Y^{p^n})$$ that sends $X$ to
$\sum_{i_1,\dotsc,i_r}a_{i_1\dotsc,i_r}y^{p^{i_1}}\otimes\dotsb\otimes
y^{p^{i_r}}$. The action of the symmetric group $S_r$ on
$\alpha_{p^n}^r$ and therefore on its representing Hopf algebra
$$\underbrace{R[Y]/(Y^{p^n})\otimes\dotsb\otimes R[Y]/(Y^{p^n})}_{r \text{\ times}}$$ permutes
$y^{p^{i_j}}$'s, i.e., if $\sigma\in S_r$, then
$\sigma(y^{p^{i_1}}\otimes\dotsb\otimes
y^{p^{i_r}})=y^{p^{i_{\sigma{1}}}}\otimes\dotsb\otimes
y^{p^{i_{\sigma{r}}}}$. We have thus,
$$\sigma(\sum_{i_1,\dotsc,i_r}a_{i_1\dotsc,i_r}y^{p^{i_1}}\otimes\dotsb\otimes
y^{p^{i_r}})=\sum_{i_1,\dotsc,i_r}a_{i_{\sigma{1}}\dotsc,i_{\sigma{r}}}y^{p^{i_1}}\otimes\dotsb\otimes
y^{p^{i_r}}$$
\begin{itemize}
\item $\phi$ is symmetric in $\alpha_{p^n}^r$ if and only if  $\phi^{\sharp}$ is
symmetric in the sense that it is invariant under composition with
any permutation, i.e., we must have $\phi^{\sharp}\circ
\sigma=\phi^{\sharp}$, or in other words,
$\sigma(\sum_{i_1,\dotsc,i_r}a_{i_1\dotsc,i_r}y^{p^{i_1}}\otimes\dotsb\otimes
y^{p^{i_r}})=\sum_{i_1,\dotsc,i_r}a_{i_1\dotsc,i_r}y^{p^{i_1}}\otimes\dotsb\otimes
y^{p^{i_r}}$, for all permutations $\sigma\in S_r$. But the elements
$y^{p^{i_1}}\otimes\dotsb\otimes y^{p^{i_r}}$ are linearly
independent over $R$. It follows then that
$a_{i_1\dotsc,i_r}=a_{i_{\sigma{1}}\dotsc,i_{\sigma{r}}}$, for all
permutations $\sigma\in S_r$. This is the only condition on
$a_{i_1,\dotsc,i_r}$ for the homomorphism $\phi$ to be symmetric. So
the number of different classes of $a_{i_1,\dotsc,i_r}$'s under the
action of $S_r$ is equal to the number of sequences of indices
$i_1\leq i_2\leq\dotsb\leq i_r$ with $0\leq i_j\leq n-1$, because
with the action of $S_r$ we can reorder the indices in this way.
This number is $\binom{n+r-1}{r-1}$. Hence, we have
$\Sym(\alpha_{p^n,R}^r,\BG_{a,R})\cong\BG_a(R)^{\binom{n+r-1}{r-1}}$,
which implies that
$\innSym(\alpha_{p^n}^r,\BG_a)\cong\BG_a^{\binom{n+r-1}{r-1}}$.
\item $\phi$ is alternating if and only if it is antisymmetric (since
the characteristic is odd). Then it is antisymmetric if and only if
$\phi^{\sharp}$ is antisymmetric. Arguing in the same way as above,
$\phi^{\sharp}$ is antisymmetric if and only if
$a_{i_{\sigma{1}}\dotsc,i_{\sigma{r}}}=\text{sgn}(\sigma)a_{i_1\dotsc,i_r}$.
In particular, every time two indices $i_{j_i}$ and $i_{j_2}$ are
equal $a_{i_1,\dotsc,i_r}$ vanishes. Here one uses again the fact
that $p$ is odd, indeed on the one hand interchanging $y$'s in
$j_1^{\text{th}}$ and $j_2^{\text{th}}$ factor of
$(R[Y]/(Y^{p^n}))^{\otimes r}$ doesn't change the sign ($y$ appears
with the same power in these factors) and on the other hand it
changes the sign (it is antisymmetric) and since $p$ is odd the
coefficient $a_{i_1,\dotsc,i_r}$ should be zero. Therefore the
number of possible nonzero $a_{i_1,\dotsc,i_r}$'s, i.e., those with
no restriction, is equal to the number of sequences of indices
$i_1<\dotsb <i_r$ with $0\leq i_j \leq n-1$. If $r$ is greater than
$n$ then this number is zero, otherwise this number is
$\binom{n}{r}$, so with the convention mentioned above it is always
$\binom{n}{r}$, and we have consequently
$\Alt(\alpha_{p^n,R}^r,\BG_{a,R})\cong \BG_a(R)^{\binom{n}{r}}$. It
follows at once that $\innAlt(\alpha_{p^n}^r,\BG_a)\cong
\BG_a^{\binom{n}{r}}$.
\end{itemize}
\end{proof}

\section{Tensor product and related constructions}

\begin{dfn}
\label{dfn5}
Let $S$ be a scheme and $G_1\dotsc,G_r,G$ commutative group schemes over $S$. A multilinear morphism
$\phi:G_1\times\dotsb\times G_r\to G$, or by abuse of terminology,
the group scheme $G$, is called a \emph{tensor product of}
$G_1,\dotsc,G_r$ if, for all commutative group schemes $H$ over $S$,
the induced map
$$\Hom(G,H)\to\Mult(G_1\times\dotsb\times G_r,H),\quad
\psi\mapsto\psi\circ\phi,$$ is an isomorphism. If such $G$ and
$\phi$ exist, we write $G_1\otimes\dotsb\otimes G_r$ for $G$.
\xqed{\blacktriangle}\end{dfn}

\begin{rem}
\label{rem8}
\begin{itemize}
\item[1)] The defining universal property of the tensor product,
makes it unique up to unique isomorphism to the extent that it
exists, and if this is so, the tensor product is functorial and
right exact in all arguments.
\item[2)] According to Theorem 4.3 in \cite{Pink1}, if $S=\Spec k$ for a field $k$, and $G_1,\dotsc,G_r$ are finite over $S$, then
$G_1\otimes\dotsb\otimes G_r$ exists and is pro-finite over $S$,
i.e., it is an inverse limit of finite group schemes over $S$. It is
in fact the inverse limit $\invlim G_{\alpha}^*$ where $G_{\alpha}$
runs over all finite subgroup schemes of
$\innMult(G_1\times\dots\times G_r,\BG_m)$. Again, every time we use
the tensor product of group schemes, we will assume the hypotheses
in this theorem so that this tensor product exists.
\item[3)] One would expect that the construction of tensor product
commutes with the base change, i.e., $(G_1\otimes\dots\otimes
G_r)_T\cong G_{1,T}\otimes\dots\otimes G_{r,T}$. But this is not
true as the example $\alpha_{p,k}\otimes\alpha_{p,k}$ shows. Indeed,
we have for any field $L$ that
$\alpha_{p,L}\otimes\alpha_{p,L}\cong\invlim G_i^*$ where $G_i$ runs
over all finite subgroup schemes of $\BG_{a,L}$  and if $k'/k$ is a
transcendental field extension of characteristic $p$, then there are
finite subgroup schemes of $\BG_{a,k'}$ that do not lie in a finite
subgroup scheme defined over $k$, so the inverse limit over $k'$ is
taken over a much larger system than over $k$. But for finite field
extensions this problem does not occur.\xqed{\lozenge}
\end{itemize}
\end{rem}

\begin{dfn}
\label{dfn6}
Let $G$ be a commutative group scheme over a base
scheme $S$.
\begin{itemize}
\item[\emph{(i)}] A symmetric multilinear morphism $\phi:G^r\to G'$,
or by abuse of terminology, the group scheme $G'$, is called an
\emph{$r^{\text{th}}$ symmetric power} of $G$, if for all
commutative group schemes $H$ over $S$, the induced map
$$\Hom(G',H)\to \Sym(G^r,H),\quad \psi\mapsto \psi\circ\phi,$$ is an
isomorphism. If such $G'$ and $\phi$ exist, we write $S^rG$ for
$G'$.

\item[\emph{(ii)}] An alternating multilinear morphism $\phi:G^r\to G'$,
or by abuse of terminology, the group scheme $G'$, is called an
\emph{$r^{\text{th}}$ alternating power} of $G$, if for all
commutative group schemes $H$ over $S$, the induced map
$$\Hom(G',H)\to \Alt(G^r,H),\quad \psi\mapsto \psi\circ\phi,$$ is an
isomorphism. If such $G'$ and $\phi$ exist, we write $\Lambda^rG$
for $G'$.\xqed{\blacktriangle}
\end{itemize}
\end{dfn}

\begin{rem}
\label{rem9} Again, if $S^rG$ resp. $\Lambda^rG$ exists, it with the
multilinear morphism $G^r\to S^rG$, resp. $G^r\to \Lambda^rG$, is
unique up to unique isomorphism.
\xqed{\lozenge}\end{rem}

\begin{prop}
\label{prop13} Let $G$ be a commutative group scheme. If
$\Lambda^nG=0$ then we have $\Lambda^mG=0$ for all $m\geq n$.
\end{prop}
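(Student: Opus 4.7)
The plan is to invoke the universal property of the alternating power: $\Lambda^m G = 0$ holds precisely when $\Alt(G^m, F) = 0$ for every commutative group scheme $F$ over $S$. Accordingly, I fix such an $F$ and an arbitrary alternating multilinear morphism $\phi: G^m \to F$, and aim to show $\phi = 0$.

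The first step is to split $G^m = G^n \times G^{m-n}$ and apply the natural isomorphism
$$\Mult(G^n \times G^{m-n}, F) \cong \Mult(G^n, \innMult(G^{m-n}, F)),$$
which is the special case of Proposition \ref{prop2} in which all $G_i$ and all $H_j$ are taken equal to $G$. Under this isomorphism $\phi$ corresponds to a multilinear morphism $\tilde\phi: G^n \to \innMult(G^{m-n}, F)$ characterized on $T$-points by $\tilde\phi(g_1,\dotsc,g_n)(g_{n+1},\dotsc,g_m) = \phi(g_1,\dotsc,g_m)$. The second step is to observe that $\tilde\phi$ is itself alternating, i.e.\ $\tilde\phi \in \Alt(G^n, \innMult(G^{m-n}, F))$. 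Indeed, since $\phi$ is alternating, its restriction to $\Delta_{ij}^n \times G^{m-n} \subset G^m$ vanishes for every pair $1\leq i<j\leq n$; under the above correspondence this translates precisely to the vanishing of the restriction of $\tilde\phi$ to $\Delta_{ij}^n \subset G^n$, which is the defining condition for $\tilde\phi$ to be alternating.

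Finally, I would invoke the hypothesis $\Lambda^n G = 0$ in the equivalent form $\Alt(G^n, F') = 0$ for every commutative group scheme $F'$ over $S$. Applied to $F' = \innMult(G^{m-n}, F)$ this forces $\tilde\phi = 0$, and hence $\phi = 0$, as required. I do not anticipate any serious obstacle in this argument; the one place that requires care is the bookkeeping in step two, namely verifying that the alternating property of $\phi$ in its first $n$ arguments corresponds exactly to the alternating property of the associated $\tilde\phi$ on $G^n$. Note that the proof in fact uses only that $\phi$ is alternating in its first $n$ positions, which is somewhat stronger than what is stated.
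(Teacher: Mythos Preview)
Your proof is correct and follows essentially the same approach as the paper. The only difference is cosmetic: the paper reduces to the case $m=n+1$ by induction and then uses $\innHom(G,H)$ in place of your $\innMult(G^{m-n},F)$, but the core idea---that the adjunction of Proposition~\ref{prop2} carries $\Alt(G^m,-)$ into $\Alt(G^n,\innMult(G^{m-n},-))$, which vanishes since $\Lambda^nG=0$---is identical.
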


\begin{proof}[\textsc{Proof}. ]
We show that $\Lambda^{n+1}G=0$; the result follows immediately by
induction. Let $H$ be a commutative group scheme. By the definition,
we have an isomorphism $\Hom(\Lambda^{n+1}G,H)\cong\Alt(G^{n+1},H).$
Under the isomorphism
$$\Mult(G^{n+1},H)\cong\Mult(G^n,\innHom(G,H))$$ given in Lemma
\ref{lem1}, the image of the subgroup $\Alt(G^{n+1},H)$ lies in the
subgroup $\Alt(G^n,\innHom(G,H))$ of $\Mult(G^n,\innHom(G,H))$.
Again by definition, we have
$$\Mult(G^n,\innHom(G,H))\cong\Hom(\Lambda^nG,\innHom(G,H)).$$ The
latter group is trivial by hypothesis. Therefore, we have
$\Hom(\Lambda^{n+1}G,H)=0$ for all commutative group schemes $H$,
which implies that $\Lambda^{n+1}G=0$.
\end{proof}

In order to show the existence of symmetric and alternating powers
of a commutative group scheme $G$ under good conditions, we have to
make a digression on quotients of inverse limits and the notion of
largest quotient:

\paragraph{\normalsize{Quotients of inverse limits}}$$ $$

Let $G=\invlim G_{\alpha}$ be a filtered inverse limit of affine
commutative group schemes and $G\twoheadrightarrow H$ a quotient
morphism. Let $A_{\alpha}, A$ resp. $B$ be the Hopf algebras
representing the group schemes $G_{\alpha}, G$ resp. $H$. We have
$B\subset A$ and $A=\bigcup_{\alpha} A_{\alpha}$ where the union is
filtered; and consequently $B=\bigcup_{\alpha} B_{\alpha}$ where
$B_{\alpha}=B\cap A_{\alpha}$ and this union is filtered too. Using
the fact that $(A_{\alpha}\otimes_kA_{\alpha})\cap
(B\otimes_kB)=(A_{\alpha}\cap B)\otimes_k(A_{\alpha}\cap
B)=B_{\alpha}\otimes_kB_{\alpha}$, we see that $B_{\alpha}$ is in
fact a Hopf algebra. It follows then that $H=\invlim H_{\alpha}$
where $H_{\alpha}=\Spec B_{\alpha}$ is the group scheme associated
to $B_{\alpha}$.\\

\paragraph{\normalsize{Largest quotient}} $$  $$

Let $\Gamma$ be a finite group acting on an abelian group $G$. Then
the \emph{largest quotient} of $G$ where $\Gamma$ acts trivially is
a quotient $\pi: G\twoheadrightarrow \widetilde{G}$ with the
following universal property: given an abelian group $H$ with a
trivial $\Gamma$-action and a $\Gamma$-equivariant homomorphism
$\phi:G\rightarrow H$ there exists a unique homomorphism
$\widetilde{\phi}:\widetilde{G}\rightarrow H$ which makes the
following diagram commute

$$\xymatrix{
G\ar@{->>}[dr]_{\pi}\ar[rr]^{\phi}&&H\\
&\widetilde{G}\ar@{-->}[ur]_{\widetilde{\phi}}^{\exists !}.}$$

It is easy to see that the largest quotient is unique up to unique
isomorphism and if we write it as quotient of $G$ by a subgroup then
it is unique. One can verify easily that explicitly the largest
quotient is the cokernel of the homomorphism
$$(\prod_{\gamma\in\Gamma} G)\rightarrow G, \quad (g_{\gamma})\mapsto \sum_{\gamma\in\Gamma}(\gamma\cdot
g_{\gamma}-g_{\gamma}).$$\\

Now let $G$ be a commutative group scheme with a $\Gamma$-action. We
can define in the same fashion the largest quotient of $G$ where
$\Gamma$ acts trivially. Using the fact that this $\Gamma$-action
induces an action on every abelian group $G(X)$ for all schemes $X$,
one sees easily that the cokernel of the morphism
$$(\prod_{\gamma\in\Gamma} G)\rightarrow G, \quad (g_{\gamma})\mapsto \sum_{\gamma\in\Gamma}(\gamma\cdot
g_{\gamma}-g_{\gamma})$$ in the category of finite commutative group
schemes over the field $k$ is indeed the largest quotient of $G$ in
this category under the action of $\Gamma$.\\

Now we are ready to show the following theorem:

\begin{thm}
\label{thm1}
If $S=\Spec k$ for a field $k$, and $G$ is finite over
$S$, then $S^rG$ and $\Lambda^rG$ exist and are pro-finite over $S$.
\end{thm}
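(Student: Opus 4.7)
The plan is to construct $S^rG$ and $\Lambda^rG$ as quotients of the tensor power $G^{\otimes r}$, which exists and is pro-finite over $S$ by Theorem 4.3 of \cite{Pink1} (see Remark \ref{rem8}(2)). Under the tensor product adjunction $\Hom(G^{\otimes r}, H) \cong \Mult(G^r, H)$, which is natural and $S_r$-equivariant with respect to the permutation action, symmetric multilinear morphisms correspond to $S_r$-invariant homomorphisms $G^{\otimes r} \to H$, while alternating multilinear morphisms correspond to those homomorphisms killing the images in $G^{\otimes r}$ of the diagonals $\Delta^r_{ij} \hookrightarrow G^r$.

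Write $G^{\otimes r} = \invlim_\alpha F_\alpha$ as a filtered inverse limit of finite commutative group schemes with surjective transition maps. After refining the indexing system I may assume each $F_\alpha$ is stable under the induced $S_r$-action, by replacing $F_\alpha$ with the scheme-theoretic image of $G^{\otimes r}$ in the product of the $S_r$-translates of $F_\alpha$ inside a sufficiently large member of the system; this is still a finite group scheme. To define $S^rG$, I apply the largest quotient construction from the \emph{Largest quotient} paragraph to each $S_r$-stable $F_\alpha$, obtaining a finite commutative group scheme $\widetilde{F_\alpha}$ on which $S_r$ acts trivially, and set $S^rG := \invlim_\alpha \widetilde{F_\alpha}$. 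To define $\Lambda^rG$, I let $N_\alpha \subset F_\alpha$ be the closed subgroup scheme generated by the images of $\Delta^r_{ij} \to G^r \to G^{\otimes r} \to F_\alpha$ for all $1 \leq i < j \leq r$, and set $\Lambda^rG := \invlim_\alpha F_\alpha/N_\alpha$. Both are pro-finite by construction.

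To verify the universal property of $S^rG$, consider a symmetric multilinear morphism $\phi: G^r \to H$. It corresponds via the tensor product adjunction to an $S_r$-equivariant homomorphism $\widetilde{\phi}: G^{\otimes r} \to H$ (where $H$ carries the trivial $S_r$-action). Its scheme-theoretic image is a quotient group scheme of $G^{\otimes r}$, hence by the \emph{Quotients of inverse limits} paragraph itself pro-finite and realized as $\invlim_\alpha H_\alpha$ where $H_\alpha$ is the image of $F_\alpha$. Since $S_r$ acts trivially on each $H_\alpha$, the universal property of the largest quotient gives a factorization $F_\alpha \twoheadrightarrow \widetilde{F_\alpha} \to H_\alpha$, compatible in $\alpha$, hence a factorization $G^{\otimes r} \to S^rG \to H$; uniqueness follows from the surjectivity of $G^{\otimes r} \twoheadrightarrow S^rG$. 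The argument for $\Lambda^rG$ is analogous: an alternating $\phi$ produces $\widetilde{\phi}$ vanishing on the image of each $\Delta^r_{ij}$ in $G^{\otimes r}$, hence on each $N_\alpha$, so $\widetilde{\phi}$ factors through $\invlim_\alpha F_\alpha/N_\alpha = \Lambda^rG$.

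The main obstacle will be the bookkeeping needed to make the pro-finite constructions precise and compatible with the transition maps: exhibiting the cofinal $S_r$-stable subsystem, and verifying that the surjections $F_\beta \twoheadrightarrow F_\alpha$ induce well-defined surjections $\widetilde{F_\beta} \twoheadrightarrow \widetilde{F_\alpha}$ and $F_\beta/N_\beta \twoheadrightarrow F_\alpha/N_\alpha$, so that the inverse limits are genuinely filtered inverse limits of finite group schemes as required for pro-finiteness. Both follow from the functoriality of the largest quotient and from the fact that the subgroup-generated-by-a-closed-subscheme operation is functorial under surjections, but each point should be checked carefully.
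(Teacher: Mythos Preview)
Your approach is correct and essentially the same as the paper's: both realize $S^rG$ and $\Lambda^rG$ as quotients of the pro-finite tensor power $G^{\otimes r}$ via the adjunction $\Hom(G^{\otimes r},H)\cong\Mult(G^r,H)$. The paper is slightly more streamlined in that it applies the largest-quotient cokernel formula directly to $G^{\otimes r}$ for $S^rG$, and for $\Lambda^rG$ takes the inverse limit over \emph{all} finite quotients of $G^{\otimes r}$ on which the diagonals die (rather than fixing a presentation $G^{\otimes r}=\invlim F_\alpha$ and quotienting level by level), thereby sidestepping the bookkeeping you flag about refining to an $S_r$-stable cofinal system and checking compatibility of the $N_\alpha$ under transition maps.
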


\begin{proof}[\textsc{Proof}. ]
Under the stated assumptions, the tensor product $G^{\otimes r}$ of
$r$ factors of $G$ exists and is pro-finite over $S$ by Theorem 4.3
in \cite{Pink1}. By its universal property the tensor product
inherits an action of the symmetric group $S_r$. It is now clear
that the largest quotient of $G^{\otimes r}$ where $S_r$ acts
trivially is a
symmetric power $S^rG$.\\

Now let $G'$ be the inverse limit of all finite quotients
$H_{\alpha}$ of $G^{\otimes r}$ with the property that the composite
morphism $\Delta^r_{ij}\hookrightarrow G^r\rightarrow G^{\otimes
r}\twoheadrightarrow H_{\alpha}$ is trivial for all $i,j$. We want
to show that $G'$ is an alternating power $\Lambda^r G$. Given a
morphism $\phi:G^{\otimes r}\to K$ with trivial composition
$\Delta^r_{ij}\into G^r\rightarrow G^{\otimes r}\xrarr {\phi} K$ for
all $i,j$, let $K'$ be the image of $\phi$, then since $K'\into K$
is a monomorphism, the composite $\Delta^r_{ij}\into G^r\to
G^{\otimes r}\onto K'$ is zero. According to what we have shown
about quotients of inverse limits and since $G^{\otimes r}$ is
pro-finite, its quotient $K'$ is pro-finite too. We can thus write
$K'=\invlim K_{\beta}$ for finite $K_{\beta}$'s. We have therefore a
unique morphism $G'=\invlim H_{\alpha}\to K'$ (since $G^{\otimes
r}\onto K'\onto K_{\beta}$ is a finite quotient of $G^{\otimes r}$
with trivial composition with $\Delta^r_{ij}\into G^r\to G^{\otimes
r}$, $K_{\beta}$ appears in the filtered system of the inverse limit
$\invlim H_{\alpha}$). It follows that $G'=\Lambda^r G$.
\end{proof}

\begin{prop}
\label{prop9} Let $G_1, G_2$ and $F$ be commutative group schemes
and $r_1,r_2$ two positive integers. We have a natural isomorphism
$$\Alt(G_1^{r_1}\times G_2^{r_2},F)\cong\Hom(\Lambda^{r_1}G_1\otimes \Lambda^{r_2}G_2,F).$$
\end{prop}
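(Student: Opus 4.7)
The plan is to establish the isomorphism by assembling a chain of natural isomorphisms that unwind the defining universal properties of the tensor product (Definition~\ref{dfn5}) and of the alternating powers (Definition~\ref{dfn6}), combined with the adjunctions of Lemma~\ref{lem1} and Proposition~\ref{prop11}. At no point does one need the tensor or alternating powers to commute with base change; the whole argument takes place at the level of $S$-homomorphisms, so the subtlety noted in Remark~\ref{rem8}~3) is irrelevant here.

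Starting from the right-hand side, Definition~\ref{dfn5} gives
$$\Hom(\Lambda^{r_1}G_1\otimes\Lambda^{r_2}G_2,F)\cong\Mult(\Lambda^{r_1}G_1\times\Lambda^{r_2}G_2,F).$$
Applying Lemma~\ref{lem1} to the second factor yields $\Hom(\Lambda^{r_1}G_1,\innHom(\Lambda^{r_2}G_2,F))$, and Definition~\ref{dfn6} then converts the Hom out of $\Lambda^{r_1}G_1$ into an Alt out of $G_1^{r_1}$, producing $\Alt(G_1^{r_1},\innHom(\Lambda^{r_2}G_2,F))$. Reading Lemma~\ref{lem1} backwards, this is exactly the subgroup of $\Mult(G_1^{r_1}\times\Lambda^{r_2}G_2,F)$ consisting of morphisms alternating in the $G_1^{r_1}$ block; the fact that the inner-Hom adjunction sends the alternating condition through verbatim is the content of Remark~\ref{rem12}.

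The second half mirrors the first on the $G_2$-side. After rearranging factors to $\Lambda^{r_2}G_2\times G_1^{r_1}$, one applies Proposition~\ref{prop11} with $H_1=\Lambda^{r_2}G_2$, $G=G_1$, $n=r_1$ to identify this group with $\Hom(\Lambda^{r_2}G_2,\innAlt(G_1^{r_1},F))$. Definition~\ref{dfn6} rewrites it as $\Alt(G_2^{r_2},\innAlt(G_1^{r_1},F))$, and a final use of Proposition~\ref{prop11}, now with $H_1=\cdots=H_{r_2}=G_2$, $G=G_1$, $n=r_1$ (together with the alternating analogue of Remark~\ref{rem4} to match the $G_2^{r_2}$-alternating conditions on both sides), delivers $\Alt(G_1^{r_1}\times G_2^{r_2},F)$.

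The main obstacle — really the only content beyond bookkeeping — is the repeated verification that the adjunctions of Lemma~\ref{lem1} and Proposition~\ref{prop11} restrict correctly to the alternating subgroups on both sides, i.e.\ that a multilinear morphism out of $G_1^{r_1}\times\Lambda^{r_2}G_2$ is alternating in $G_1^{r_1}$ iff the corresponding morphism $\Lambda^{r_2}G_2\to\innMult(G_1^{r_1},F)$ factors through $\innAlt(G_1^{r_1},F)$. This is exactly parallel to the symmetric statement proved in Remark~\ref{rem4} and used in Remark~\ref{rem12}, and the same $T$-pointwise argument works in the alternating case.
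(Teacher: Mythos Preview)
Your proof is correct and follows essentially the same approach as the paper's: both assemble a chain of natural isomorphisms using the universal properties of $\otimes$ and $\Lambda$ (Definitions~\ref{dfn5} and~\ref{dfn6}) together with the adjunctions of Lemma~\ref{lem1} and Proposition~\ref{prop11}. The only difference is cosmetic---the paper runs the chain from $\Alt$ to $\Hom$ while you go from $\Hom$ to $\Alt$, and the roles of $G_1$ and $G_2$ are swapped at intermediate stages.
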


\begin{proof}[\textsc{Proof}. ]
Using Proposition \ref{prop11} we have a natural isomorphism
$$\Alt(G_1^{r_1}\times G_2^{r_2},F)\cong\Alt(G_1^{r_1},\innAlt(G_2^{r_2},F))$$and by
definition of $\Lambda^{r_1}G_1$ this is isomorphic to
$\Hom(\Lambda^{r_1}G_1,\innAlt(G_2^{r_2},F))$ which is again by
Proposition \ref{prop11} isomorphic to $\Alt(\Lambda^{r_1}G_1\times
G_2^{r_2},F)\cong$
$$\Alt(G_2^{r_2},\innHom(\Lambda^{r_1}G_1,F))\cong\Hom(\Lambda^{r_2}G_2,\innHom(\Lambda^{r_1}G_1,F))$$
$$\cong\Mult(\Lambda^{r_1}G_1\times\Lambda^{r_2}G_2,F)\cong\Hom(\Lambda^{r_1}G_1\otimes\Lambda^{r_2}G_2,F).$$
\end{proof}

\begin{rem}
\label{rem13}
\begin{itemize}
\item[1)] Let
$\lambda_{1,2}:G_1^{r_1}\times G_2^{r_2}\to\Lambda^{r_1}G_1\otimes
\Lambda^{r_2}G_2$ be the multilinear morphism in
$\Alt(G_1^{r_1}\times G_2^{r_2},\Lambda^{r_1}G_1\otimes
\Lambda^{r_2}G_2)$ that maps to the identity of
$\Lambda^{r_1}G_1\otimes \Lambda^{r_2}G_2$ by the isomorphism given
in the Proposition \ref{prop9}. Then, one can easily see that the
group scheme $\Lambda^{r_1}G_1\otimes\Lambda^{r_2}G_2$ has the
following universal property: Given any multilinear morphism
$\phi:G_1^{r_1}\times G_2^{r_2}\to F$ which is alternating in
$G_1^{r_1}$ and $G_2^{r_2}$, there exists a unique homomorphism
$\widetilde{\phi}:\Lambda^{r_1}G_1\otimes\Lambda^{r_2}G_2\to F$
making the following diagram commute:
$$\xymatrix{G_1^{r_1}\times
G_2^{r_2}\ar[rr]^{\phi}\ar[dr]_{\lambda_{1,2}}&&F\\
&\Lambda^{r_1}G_1\otimes\Lambda^{r_2}G_2.\ar[ur]_{\exists!\widetilde{\phi}}&
}$$
\item[2)] It is clear that we can generalize the Proposition
\ref{prop9}, i.e., if $G_1,\dots,G_n$ are commutative group schemes
and $r_1,\dots,r_n$ are positive integers, then there is a
multilinear morphism
$$\lambda_{1,\dots,n}:G_1^{r_1}\times\dots\times G_n^{r_n}\to\Lambda^{r_1}G_1\otimes\dots\otimes\Lambda^{r_n}G_n$$
alternating in each $G_i^{r_i}$ such that the homomorphism
$$\Hom(\Lambda^{r_1}G_1\otimes\dots\otimes\Lambda^{r_n}G_n,F)\to\Alt(G_1^{r_1}\times\dots\times G_n^{r_n},F)$$
$$\phi\mapsto\phi\circ\lambda_{1,\dots,n}$$is an isomorphism.\xqed{\lozenge}
\end{itemize}
\end{rem}

We know from the construction of the tensor product, which is given
in the proof of Theorem 4.3 in \cite{Pink1}, that
$G_1\otimes\dotsb\otimes G_r\cong\invlim G_{\alpha}^*$ where
$G_{\alpha}$ runs through all finite subgroups of
$\innMult(G_1\times\dotsb\times G_r,\BG_m)$ and we know that if this
group scheme is isomorphic to another group scheme $H$, then the
corresponding inverse limits of it and $H$ are isomorphic too and we
deduce that the tensor product of $G_1,\, G_2,\dotsc,\, G_r$ is
uniquely determined up to unique isomorphism by
$(\psi,\innMult(G_1\times\dotsb\times G_r,\BG_m))$, where $\psi$ is
the universal multilinear morphism associated to $G_1\dotsc, G_r$.

Now suppose that $G_1=G_2=\dotsb=G_r$ and the universal multilinear
morphism $\psi:\innMult(G^r,\BG_m)\times G^r\to \BG_m$ is symmetric
(resp. alternating) in $G^r$. Then any multilinear morphism
$\phi:H\times G^r\to \BG_m$ is symmetric (resp. alternating) which
follows from the commutativity of the diagram
$$\xymatrix{
H\times G^r\ar[dr]_{\widetilde{\phi}\times \Id_{G^r}}\ar[rr]^{\phi}&&\BG_m\\
&\widetilde{G}\times G^r\ar[ur]_{\psi_{\widetilde{G}}}&}$$with the
notations of Proposition \ref{prop6}.

If $H$ is a finite commutative group scheme and $\phi:G^r\to H$ is
multilinear, then $\phi$ is symmetric (resp. alternating) if and
only if the corresponding multilinear morphism $H^*\times
G^r\to\BG_m$ given by Cartier duality and Lemma \ref{lem1} is
symmetric (resp. alternating) but as we have seen, in our situation
any such multilinear morphism is symmetric (resp. alternating) and
thus
$$\Hom(G^{\otimes r},H)\cong\Mult(G^r,H)\cong\Sym(G^r,H)\cong\Hom(S^rG,H).$$ If $H$ is of finite type,
then any multilinear morphism $\phi:G^r\to H$ factors through a
finite subgroup $H'$ of $H$, i.e., we have a commutative diagram:
$$\xymatrix{
G^r\ar[rr]^{\phi}\ar[dr]_{\phi'}&&H\\
&H'\ar@{^{ (}->}[ur].&}$$ As we proved above, $\phi'$ is symmetric
(resp. alternating), hence $\phi$ is symmetric (resp. alternating)
as well and we have again
$$\Hom(G^{\otimes r},H)\cong\Mult(G^r,H)\cong\Sym(G^r,H)\cong\Hom(S^rG,H).$$

Now if we are in the general case, i.e., $H$ is any commutative
group scheme, then we can write it as an inverse limit of
commutative group schemes of finite type, say $H=\invlim
H_{\alpha}$. We have
$$\Hom(G^{\otimes r},H)\cong\Hom(G^{\otimes r},\invlim
H_{\alpha})\cong\invlim\Hom(G^{\otimes r},H_{\alpha})\cong$$
$$\invlim\Hom(S^rG,H_{\alpha})\cong\Hom(S^rG,\invlim
H_{\alpha})\cong\Hom(S^rG,H).$$ We have thus in any case that
$\Hom(G^{\otimes r},H)\cong\Hom(S^rG,H)$ and it implies that
$G^{\otimes r}\cong S^rG$. The same arguments hold in the
alternating case and we have in this case that $G^{\otimes
r}\cong\Lambda^rG$.\\

\begin{ex}
\label{ex6}Here we give a concrete example and calculate the
tensor product $\alpha_p\otimes\alpha_p$, the symmetric power
$S^2\alpha_p$ and the alternating power $\Lambda^2 \alpha_p$ over
$S=\Spec k$ for a field $k$ of characteristic $p>0$. In order to do
this, we try to find the universal group scheme associated to
$\alpha_p,\alpha_p$ (see Definition \ref{dfn7}) that we denote by
$\widetilde{\alpha_p}^2$ in this example and the universal
multilinear morphism
$\psi_{\widetilde{\alpha_p}^2}:\widetilde{\alpha_p}^2\times\alpha_p\times\alpha_p\to\BG_m$.
The universal group $\widetilde{\alpha_p}^2$ is the group
$\innMult(\alpha_p^2,\BG_m)\overset{1.10}{\cong}\innHom(\alpha_p,\innHom(\alpha_p,\BG_m))$,
and we have isomorphisms
$$\innHom(\alpha_p,\BG_m)\cong\alpha_p^*,\
\alpha_p^*\cong\alpha_p \ \ \text{and}\ \
\BG_a\cong\innHom(\alpha_p,\alpha_p).\qquad (\blacktriangle)$$ It
follows then that
$$\widetilde{\alpha_p}^2\cong\innHom(\alpha_p,\innHom(\alpha_p,\BG_m))
\cong\innHom(\alpha_p,\alpha_p^*)\cong\innHom(\alpha_p,\alpha_p)\cong\BG_a.\qquad
(\bigstar)$$ Using these isomorphisms and those stated at the
beginning  of this section we can write:
$$
\Mult(\BG_a\times\alpha_p\times\alpha_p,\BG_m)\cong\Mult(\BG_a\times\alpha_p,\innHom(\alpha_p,\BG_m))$$
$$\cong\Hom(\BG_a,\innHom(\alpha_p,\innHom(\alpha_p,\BG_m)))\cong\Hom(\BG_a,\widetilde{\alpha_p}^2).$$
If we identify $\widetilde{\alpha_p}^2$ with $\BG_a$ via the
isomorphism $\,(\bigstar)\,$, then in order to find the universal
multilinear morphism
$\phi:\BG_a\times\alpha_p\times\alpha_p\to\BG_m$ we have to chase
through these isomorphism and find the element of
$\Mult(\BG_a\times\alpha_p\times\alpha_p,\BG_m)$ corresponding to
the inverse of the isomorphism $\,(\bigstar)\,$ in
$\Hom(\BG_a,\widetilde{\alpha_p}^2)$.\\

Before doing this, we explain the isomorphisms
$\,(\blacktriangle)\,$. The isomorphism
$$\BG_a\cong\innHom(\alpha_p,\alpha_p)$$ is given for any $k$-algebra
$R$, by the morphism $r\mapsto \lambda_r$ where,
$\lambda_r:\alpha_{p,R}\to\alpha_{p,R}$ is defined by
$$\lambda_r(S):\alpha_p(S)\to\alpha_p(S)\quad s\mapsto r\cdot s.$$
The isomorphism $\innHom(\alpha_p,\BG_m)\cong\alpha_p^*$ is a
general fact about finite commutative group schemes and we explain
it in the case where $G=\Spec A$ is a finite affine commutative
group scheme over $k$. Given $f\in G^*(R)$ for a $k$-algebra $R$, by
definition, $f$ is a $k$-algebra homomorphism from $A^*$, the dual
of $A$, to the $k$-algebra $R$. So $f$ defines an $R$-algebra
homomorphism from $A^*\otimes_kR\cong(A\otimes_kR)^*$ to $R$ which
we denote again by $f$, which is in particular $R$-linear and by
duality $((A\otimes_kR)^*)^*\cong A\otimes_kR$. It follows that
there is an element $a\in A\otimes_kR$ such that for any
$g\in(A\otimes_kR)^*$, we have $f(g)=g(a)$. The homomorphism $f$
being an $R$-algebra homomorphism is equivalent to $a$ being a
\emph{group-like element}, i.e., an element such that
$\Delta(a)=a\otimes a$ and $\epsilon(a)=1$. This shows that the
elements of $G^*(R)$ are in bijection with group-like elements of
$A\otimes_kR$. But any such element defines in a unique way a
homomorphism $\theta_{R,a}:G_R \to \BG_{m,R}$ as follows: given an
$R$-algebra $S$ and an element $\psi\in G(S)$, i.e., a $k$-algebra
homomorphism $A\to S$, then $\theta_{R,a}(\psi)\in\BG_m(S)$ is the
composite
$$k[x,x^{-1}]\arrover{i_a} A\otimes_kR\arrover{\psi\otimes
1}S\otimes_kR\arrover{m}S$$ where $i_a(x)=a$ and $m(s\otimes
r)=r\cdot s$. Hence we have the isomorphism
$G^*\cong\innHom(G,\BG_m).$\\

Now we explain the isomorphism $\alpha_p\cong\alpha_p^*$. We
first give the isomorphism of Hopf algebras, then the isomorphism
between the group schemes with $\alpha_P^*$ regarded as
$\innHom(\alpha_p,\BG_m)$ as explained above. The Hopf algebras of
$\alpha_p$ and $\alpha_p^*$ are respectively $k[Y]/(Y^p)$ and
$(k[Y]/(Y^p))^*$. Elements $1, y,\dotsc, y^{p-1}$, the
images of $1, Y,\dotsc, Y^{p-1}$ in $k[Y]/(Y^p)$, form a
$k$-basis of $k[Y]/(Y^p)$ and denote by $\xi_0, \xi_1, \xi_2,\dotsc,
\xi_{p-1}$ the dual basis of $(k[Y]/(Y^p))^*$. A direct calculation
shows that the morphism sending $\xi_i$ to $\frac{1}{i!}\cdot y^i$
gives a Hopf algebra isomorphism between $(k[Y]/(Y^p))^*$ and
$k[Y]/(Y^p)$. This isomorphism defines for any $k$-algebra $R$ an
isomorphism of abelian groups $\alpha_p(R)\to \alpha_p^*(R)$ as
follows: an element $r\in \alpha_p(R)$ defines a $k$-algebra
homomorphism $k[Y]/(Y^p)\to R$ sending $y$ to $r$; we have thus a
$k$-algebra homomorphism $(k[Y]/(Y^p))^*\to R$ sending $\xi_i$ to
$\frac{1}{i!}r^i$ and it gives canonically an $R$-algebra
homomorphism
$\gamma_r:(k[Y]/(Y^p)\otimes_kR)^*\cong(k[Y]/(Y^p))^*\otimes_kR\to
R$, which sends $\xi_i\otimes1$ to $\frac{1}{i!}r^i$. Suppose that
$\gamma_r$ corresponds to the group-like element
$\sum_{i=0}^{p-1}y^i\otimes u_i\in k[Y]/(Y^p)\otimes_kR$. So by
definition, we have for any element $g$ of
$(k[Y]/(Y^p)\otimes_kR)^*$ that
$\gamma_r(g)=g(\sum_{i=0}^{p-1}y^i\otimes u_i)$. In particular
taking $g=\xi_j\otimes1$ and we obtain:
$$\frac{1}{j!}r^j=\gamma_r(\xi_j\otimes1)=(\xi_j\otimes1)(\sum_{i=0}^{p-1}y^i\otimes
u_i)=\sum_{i=0}^{p-1}\xi_j(y^i)u_i=u_j.$$ We deduce that the element
$r\in \alpha_p(R)$ corresponds to the group-like element
$U_r:=\sum_{i=0}^{p-1}\frac{1}{i!}\cdot y^i\otimes r^i$, which
itself corresponds to the morphism $\theta_{R,r}:\alpha_{p,R}\to
\BG_{m,R}$ defined for any $R$-algebra $S$ by
$\theta_{R,r}(S):\alpha_p(S)\to \BG_m(S)$ sending an element $s\in
\alpha_p(S)$ to the composite
$$k[x,x^{-1}]\arrover{i_{U_r}}k[Y]/(Y^p)\otimes_kR\arrover{\psi_s\otimes1}S\otimes_kR\arrover{m}S$$
where $\psi_s(y)=s$. The image of $x$ via this composite is
$\sum_{i=0}^{p-1}\frac{1}{i!}(rs)^i$. Thus, regarding $\BG_m(S)$ as
a subset of $S$, i.e., the group of invertible elements, this
$k$-algebra homomorphism is the element
$\sum_{i=0}^{p-1}\frac{1}{i!}(rs)^i$.\\

Now, we can proceed to find the desired multilinear morphism
$\BG_a\times\alpha_p\times\alpha_p\to \BG_m$. From the above
arguments, it is clear that the isomorphism
$\phi:\BG_a\cong\widetilde{\alpha_p}^2$ is given for any $k$-algebra
$R$, by the morphism $$\phi_R:\BG_a(R)\to
\Hom_R(\alpha_{p,R},\alpha_{p,R}^*),\quad r\mapsto \phi_{R,r}$$
where $\phi_{R,r}:\alpha_{p,R}\to \alpha_{p,R}^*$ is defined as
follows: if $S$ is an $R$-algebra, then
$\phi_{R,r}(S):\alpha_p(S)\to \alpha_p^*(S)$ sends an element $s\in
\alpha_p(S)$ to the group-like element
$\sum_{i=0}^{p-1}\frac{1}{i!}(rs)^i$ or in other words, to the
element $\phi_{R,r,s}$ in $\Hom_S(\alpha_{p,S},\BG_{m,S})$ which
sends an element $t\in \alpha_p(T)$ for an $S$-algebra $T$ to the
element $\sum_{i=0}^{p-1}\frac{1}{i!}(rst)^i$. Under the isomorphism
$$\Hom(\BG_a,\innHom(\alpha_p,\innHom(\alpha_p,\BG_m)))\cong
\Mult(\BG_a\times\alpha_p,\innHom(\alpha_p,\BG_m))$$ $\phi$ is
mapped to the multilinear morphism
$\widetilde{\phi}:\BG_a\times\alpha_p\to \innHom(\alpha_p,\BG_m)$
that sends the element $(r,s)\in \BG_a(R)\times\alpha_p(R)$ to
$\phi_{R,r,s}\in \Hom_R(\alpha_{p,R},\BG_{m,R})$ for any $k$-algebra
$R$. And under the isomorphism
$$\Mult(\BG_a\times\alpha_p,\innHom(\alpha_p,\BG_m))\cong\Mult(\BG_a\times\alpha_p\times\alpha_p,\BG_m)$$
$\widetilde{\phi}$ is sent to the multilinear morphism
$\widehat{\phi}:\BG_a\times\alpha_p\times\alpha_p\to \BG_m$ which
maps the triple $(r,s,t)\in
\BG_a(R)\times\alpha_p(R)\times\alpha_p(R)$ to the element
$\sum_{i=0}^{p-1}\frac{1}{i!}(rst)^i\in\BG_m(R)$ for any $k$-algebra
$R$. The morphism $\widehat{\phi}$ is our universal multilinear
morphism. It is clearly symmetric in the second and third arguments
and it follows from preceding discussion that we have
$S^2\alpha_p\cong\alpha_p\otimes\alpha_p$. Therefore, any
multilinear morphism $\alpha_p\times\alpha_p\to H$ to any
commutative group scheme $H$ is symmetric and if the characteristic
$p$ is $2$, then it is automatically alternating and we have that
$\Lambda^2\alpha_p\cong S^2\alpha_p\cong\alpha_p\otimes\alpha_p$. If
the characteristic is not $2$ and if this multilinear morphism is
alternating, then it is trivial and it follows that the alternating
group $\Lambda^2\alpha_p$ is trivial.\xqed{\blacksquare}
\end{ex}

\begin{ex}
\label{ex7}
By Proposition \ref{ex2}, the universal group
$\innMult(\alpha_p^n,\BG_m)$ associated to the $n$-fold tensor
product $\alpha_p\otimes\dotsb\otimes\alpha_p$ with $n\geq 2$ is
isomorphic to $\BG_a$. Then similar calculations show that the
universal multilinear morphism
$$\psi:\innMult(\alpha_p^n,\BG_m)\times\alpha_p^n\to \BG_m$$ is given for any $k$-algebra $R$, by the morphism
$$(s,r_1,\dotsc,r_n)\mapsto \sum_{i=0}^{p-1}\frac{(s\cdot r_1\dotsb r_n)^i}{i!} .$$
This morphism is clearly symmetric in $\alpha_p^n$ and we have
therefore
\begin{itemize}
\item $S^n\alpha_p\cong\alpha_p^{\otimes n}$
\item $\Lambda^n\alpha_p=0$ if $p\neq 2$ and
\item $\Lambda^n\alpha_p\cong\alpha_p^{\otimes n}$ if $p=2$.
\end{itemize}
From the construction of tensor products, we know that
$\alpha_p\otimes\dotsb\otimes\alpha_p\cong\invlim G_{\gamma}^*$,
where $G_{\gamma}$ runs through all finite subgroups of
$\innMult(\alpha_p^n,\BG_m)$. But the latter group is by Proposition
\ref{ex2} isomorphic to the additive group $\BG_a$. Thus, tensor
products $\alpha_p^{\otimes n}$ for any $n\geq2$ are isomorphic
which implies that the symmetric powers $S^n\alpha_p$ and the
alternating powers $\Lambda^n\alpha_p$ are also independent from $n$
for $n\geq2.$\\

This result together with Proposition \ref{prop5} imply that
for any commutative group scheme $F$ we have
\begin{itemize}
\item $\innSym(\alpha_p^n,F)=\innMult(\alpha_p^n,F)$ and
\item $\innAlt(\alpha_p^n,F)=0$ if $p>2$ and
\item $\innAlt(\alpha_p^n,F)=\innMult(\alpha_p^n,F)$ if $p=2$
\end{itemize}
\xqed{\blacksquare}\end{ex}

For the rest of this section, let $H$ be an arbitrary
commutative group scheme.\\

\textbf{Notation.} Let $G',G''$ be subgroup schemes of $G$ and $F$ a
commutative group scheme. By $\Alt(G'^r\times G''^s\times F^t,H)$ we
mean the group of multilinear morphisms that are alternating in
$G'^r, G''^s$ and $(G'\cap G'')^{r+s}$ and when we say that a
multilinear morphism $G'^r\times G''^s\times F^t\to H$ is
alternating, we mean that it belongs to the group $\Alt(G'^r\times
G''^s\times F^t,H)$. Likewise, we define the group
$\Alt(G_1^{r_1}\times \dots\times G_n^{r_n}\times F_1^{s_1}\times
\dots\times F_m^{s_m},H)$ with $G_i$ subgroup schemes of $G$ and
$F_j$'s arbitrary commutative group schemes.

\begin{lem}
\label{lem4}
Let $\pi:G\onto G''$ be an epimorphism and let
$\phi:G''^r\to H$ be a multilinear morphism such that the
composition $\phi\circ \pi^r:G^r\to H$ is alternating. Then $\phi$
is alternating as well.
\end{lem}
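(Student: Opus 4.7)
The plan is to verify the defining condition of \emph{alternating} directly, namely that $\phi$ vanishes when restricted to each diagonal $\Delta^r_{ij}G''$ for $1\leq i<j\leq r$. The whole argument rests on the fact that an epimorphism of commutative group schemes is preserved by products and by restriction to these diagonals, combined with the universal (epimorphism / fppf-surjectivity) property of $\pi$.

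First I would identify $\Delta^r_{ij}G$ with $G^{r-1}$ and $\Delta^r_{ij}G''$ with $G''^{r-1}$ via the embedding that repeats the $i$-th coordinate in position $j$. Under these identifications, the restriction of $\pi^r$ to $\Delta^r_{ij}G\to\Delta^r_{ij}G''$ becomes simply $\pi^{r-1}:G^{r-1}\to G''^{r-1}$, which is again an epimorphism (products of epimorphisms of commutative group schemes are epimorphisms).

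Next, the hypothesis that $\phi\circ\pi^r$ is alternating means exactly that $(\phi\circ\pi^r)|_{\Delta^r_{ij}G}=0$ for all $i<j$. Unraveling the definitions, this restriction equals $\phi|_{\Delta^r_{ij}G''}\circ\pi^{r-1}$, so we obtain the equation
$$\phi|_{\Delta^r_{ij}G''}\circ\pi^{r-1}=0.$$
Since $\pi^{r-1}$ is an epimorphism, this forces $\phi|_{\Delta^r_{ij}G''}=0$, which is precisely what is needed.

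The one step that requires care — and is really the only potential obstacle — is the passage from $\phi|_{\Delta^r_{ij}G''}\circ\pi^{r-1}=0$ to $\phi|_{\Delta^r_{ij}G''}=0$, because $\phi|_{\Delta^r_{ij}G''}$ is only a morphism of schemes, not a homomorphism of group schemes. I would justify it functorially: given any $S$-scheme $T$ and any point $(\bar{g}_1,\dots,\bar{g}_r)\in\Delta^r_{ij}G''(T)$ (so $\bar{g}_i=\bar{g}_j$), choose an fppf cover $T'\to T$ and lifts $\tilde g_k\in G(T')$ of $\bar{g}_k|_{T'}$ for $k\neq j$, and set $\tilde g_j:=\tilde g_i$. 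Then $(\tilde g_1,\dots,\tilde g_r)$ lies in $\Delta^r_{ij}G(T')$, and
$$\phi(\bar g_1,\dots,\bar g_r)|_{T'}\;=\;(\phi\circ\pi^r)(\tilde g_1,\dots,\tilde g_r)\;=\;0$$
because $\phi\circ\pi^r$ is alternating. Since $H$ is a sheaf in the fppf topology, this implies $\phi(\bar g_1,\dots,\bar g_r)=0$ on $T$ itself. As $T$ and the point were arbitrary, $\phi|_{\Delta^r_{ij}G''}=0$, and hence $\phi$ is alternating.
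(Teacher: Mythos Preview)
Your proof is correct and follows essentially the same route as the paper: both argue that $\pi$ induces an epimorphism between the diagonals $\Delta^r_{ij}G\to\Delta^r_{ij}G''$, observe that the composite $\phi|_{\Delta^r_{ij}G''}\circ(\text{this epimorphism})$ vanishes by hypothesis, and then cancel the epimorphism. The paper simply invokes that $\Delta\pi$ is epimorphic to conclude $\phi\circ\iota''=0$, whereas you spell out the cancellation step via fppf-local lifting and the sheaf property of $H$; this extra care is justified (since $\phi|_{\Delta^r_{ij}G''}$ is not a group homomorphism) but does not change the underlying argument.
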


\begin{proof}[\textsc{Proof}]
The morphism $\pi$ induces a morphism $\Delta \pi:\Delta G\to\Delta
G''$ between diagonals and since the morphism $\pi$ is epimorphic,
the morphism $\Delta\pi$ is epimorphic too.

Similarly, we have an induced epimorphism between
$\Delta_{ij}^rG\subset G^r$ and $\Delta_{ij}^r G''\subset G''^r$ for
all $1\leq i < j\leq r$, which we denote by $\Delta_{ij}^r \pi$. In
order to show that $\phi$ is alternating, we must show that for any
$1\leq i<j\leq r$ the composition $\Delta_{ij}^rG''\into
G''^r\arrover{\phi}H$ is trivial. But we have a commutative diagram
$$\xymatrix{
\Delta^r_{ij}G\ar[r]^{\Delta\pi}\ar@{^{
(}->}[d]_{\iota}&\Delta^r_{ij}G''\ar@{^{ (}->}[d]^{\iota''}&\\
G^r\ar[r]_{\pi^r}&G''^r\ar[r]_{\phi}&H.}$$ Since the composite
$\phi\circ\pi^r$ is alternating, the composition
$\phi\circ\pi^r\circ\iota$ is trivial, and so is the composition
$\phi\circ\iota''\circ\Delta\pi$. The morphism $\Delta\pi$ is
epimorphic and it follows that $\phi\circ\iota''$ is trivial.
\end{proof}

\begin{rem}
\label{rem10}
Let $G'$ be a subgroup scheme of $G$ and
$\pi:G\onto G''$ an epimorphism. It can be shown in the same fashion
that if the composition of a multilinear morphism $G''^r\times
G^s\times G'^t\to H$ with the epimorphism
$\pi^r\times\Id_{G's}\times\Id_{G'^t}:G^r\times G^s\times G'^t\to
G''^r\times G^s\times G'^t$ is alternating, then this multilinear
morphism is also alternating.\xqed{\lozenge}\end{rem}

\begin{lem}
\label{lem5} Let $G_1\dots,G_r$ be commutative group schemes and
$\psi:G_1\times\dots\times G_r\to H$ a multilinear morphism. Assume
that for some $1\leq i\leq r$ we have a short exact sequence $0\to
G'_i\arrover{\iota} G_i\arrover{\pi} G''_i\to 0$. If the restriction
$\psi|_{G_1\times \dots\times G'_i\times\dots\times G_r}$ is zero,
then there is a unique multilinear morphism $\psi': G_1\times
\dots\times G''_i\times\dots\times G_r\to H$ such that
$\psi=\psi'\circ(\Id_{G_1}\times\dots\times\pi\times
\dots\times\Id_{G_r})$ with $\pi$ at the $i^{\text{th}}$ place.
\end{lem}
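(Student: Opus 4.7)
The plan is to reduce the problem to the universal property of quotients of commutative group schemes via the inner-Hom/multilinear adjunction developed earlier in the section. Fix the index $i$ and set $F$ to be the functor
\[
T\mapsto \Mult_T\bigl(G_{1,T}\times\dots\times\widehat{G_{i,T}}\times\dots\times G_{r,T},\, H_T\bigr),
\]
where the hat denotes omission. Whether or not $F$ is representable, it is a functor to abelian groups, and by iterating Lemma \ref{lem1} (equivalently, by Proposition \ref{prop2} applied with one factor singled out) there is a natural bijection between multilinear morphisms $G_1\times\dots\times G_r\to H$ and homomorphisms $G_i\to F$ of functors in abelian groups, obtained by fixing the remaining arguments.

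First I would translate $\psi$ through this adjunction to a homomorphism $\hat\psi\colon G_i\to F$. The hypothesis that the restriction of $\psi$ to $G_1\times\dots\times G'_i\times\dots\times G_r$ is zero is exactly the statement that $\hat\psi\circ\iota=0$. Since $0\to G'_i\arrover{\iota} G_i\arrover{\pi} G''_i\to 0$ is a short exact sequence of commutative group schemes, $G''_i$ is the cokernel of $\iota$ in the category of sheaves of abelian groups (on the fppf site of $S$), so the universal property of cokernels yields a \emph{unique} homomorphism $\tilde\psi\colon G''_i\to F$ with $\tilde\psi\circ\pi=\hat\psi$. Transporting $\tilde\psi$ back through the adjunction produces a multilinear morphism
\[
\psi'\colon G_1\times\dots\times G''_i\times\dots\times G_r\to H,
\]
and chasing the adjunction shows that $\psi=\psi'\circ(\Id_{G_1}\times\dots\times\pi\times\dots\times\Id_{G_r})$, while uniqueness of $\psi'$ follows from uniqueness of $\tilde\psi$ together with the fact that the adjunction is a bijection.

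The only delicate point, and the step I would check carefully, is the factorization $\hat\psi=\tilde\psi\circ\pi$: one must know that a homomorphism of sheaves of abelian groups out of $G_i$ vanishing on $G'_i$ factors uniquely through $G''_i$, which is precisely the content of exactness of the sequence in the sheaf category. Everything else is formal consequence of the adjunction isomorphisms already established.
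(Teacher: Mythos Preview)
Your proof is correct and follows essentially the same strategy as the paper's: both use the adjunction of Proposition~\ref{prop2} to convert the problem into factoring a homomorphism out of $G_i$ through its quotient $G''_i$, and then invoke left-exactness of $\Hom(-,F)$ on the short exact sequence $0\to G'_i\to G_i\to G''_i\to 0$. The only difference is that the paper takes $F=\innMult(G_1\times\dots\times\check{G_i}\times\dots\times G_r,H)$ as a representable group scheme (relying on the standing finiteness and flatness convention of Remark~\ref{rem3}), whereas you work with $F$ as a sheaf of abelian groups directly; this sidesteps the representability hypothesis and is a mild but harmless generalization of the same argument.
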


\begin{proof}[\textsc{Proof}]
By functoriality of the isomorphism in Proposition \ref{prop2} we
have a commutative diagram:
$$\xymatrix{
\Mult(G_1\times\dots\times
G_r,H)\ar[r]^{\cong\qquad\qquad\quad}\ar[d]^{\overline{\pi}}&\Hom(G''_i,\innMult(G_1\times\dots\times
\check{G_i}\times\dots\times
G_r,H))\ar[d]^{\pi^*}\\
\Mult(G_1\times\dots\times
G_r,H)\ar[d]^{\overline{\iota}}\ar[r]^{\cong\qquad\qquad\quad}&\Hom(G_i,\innMult(G_1\times\dots\times
\check{G_i}\times\dots\times
G_r,H))\ar[d]^{\iota^*}\\
\Mult(G_1\times\dots\times
G_r,H)\ar[r]^{\cong\qquad\qquad\quad}&\Hom(G'_i,\innMult(G_1\times\dots\times
\check{G_i}\times\dots\times G_r,H))}$$ where the indicated maps are
the obvious ones and $\check{G_i}$ means that this factor is
omitted. The right column is exact and $\pi^*$ is injective, because
the sequence $0\to G'_i\arrover{\iota} G_i\arrover{\pi} G''_i\to 0$
is exact and the functor $\Hom(-,F)$ is left exact for any
commutative group scheme $F$. Therefore, the left column is exact
too and $\overline{\pi}$ is injective. The morphism $\psi$ is an
element of $\Mult(G_1\times\dots\times G_r,H)$ which goes to zero
under the map $\overline{\iota}$ (restriction map). By exactness,
there is a unique multilinear morphism
$\psi'\in\Mult(G_1\times\dots\times G''_i\times\dots\times G_r,H)$
which is mapped to $\psi$ under $\overline{\pi}$. This proves the
lemma.
\end{proof}

\begin{lem}
\label{lem6} Let $G$ be a commutative group scheme, and let $0\to
G'\arrover{\iota} G\arrover{\pi} G''\to 0$ be a short exact
sequence. Then the restriction map
$$\Alt(G^m,H)\to \Mult(G'\times G^{m-1},H)$$
is injective, whenever $\Lambda^mG''=0$.
\end{lem}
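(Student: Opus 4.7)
The plan is to show that if $\phi \in \Alt(G^m,H)$ restricts trivially to $G'\times G^{m-1}$, then $\phi$ factors through $\pi^m:G^m\twoheadrightarrow G''^m$ via an alternating morphism on $G''^m$, which must vanish by the hypothesis $\Lambda^mG''=0$.

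First, I would promote the vanishing of $\phi$ on $G'\times G^{m-1}$ to vanishing on every ``slot'' $G^{i-1}\times G'\times G^{m-i}$ for $1\le i\le m$. Since $\phi$ is alternating, Remark \ref{rem2}(4) tells us $\phi$ is antisymmetric, so swapping the $i$-th factor into the first position only multiplies $\phi$ by $\pm1$, and the hypothesis transfers.

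Next I would peel off the factors one by one using Lemma \ref{lem5}. Applied to the first factor of the short exact sequence $0\to G'\to G\to G''\to0$, the lemma produces a unique $\phi_1:G''\times G^{m-1}\to H$ with $\phi=\phi_1\circ(\pi\times\Id^{m-1})$. To iterate, I must check that $\phi_1|_{G''\times G'\times G^{m-2}}=0$: for any $(g'',g',g_3,\dots,g_m)$, lifting $g''$ to some $g\in G$ via surjectivity of $\pi$ gives $\phi_1(g'',g',g_3,\dots)=\phi(g,g',g_3,\dots)=0$ by the vanishing established in the previous paragraph. Applying Lemma \ref{lem5} again yields $\phi_2:G''^2\times G^{m-2}\to H$ factoring $\phi_1$, and by the same kind of lifting argument one verifies the vanishing needed to iterate. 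After $m$ steps one obtains a multilinear morphism $\widetilde\phi:G''^m\to H$ satisfying $\phi=\widetilde\phi\circ\pi^m$.

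Now Lemma \ref{lem4} applies: the composition $\widetilde\phi\circ\pi^m=\phi$ is alternating and $\pi^m$ is an epimorphism (as a product of epimorphisms), so $\widetilde\phi$ itself is alternating. Finally, by the defining universal property of $\Lambda^mG''$ we have $\Alt(G''^m,H)\cong\Hom(\Lambda^mG'',H)$, which vanishes by the assumption $\Lambda^mG''=0$. Hence $\widetilde\phi=0$, and therefore $\phi=\widetilde\phi\circ\pi^m=0$, proving injectivity.

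The main obstacle, such as it is, lies in the bookkeeping of the iterative factorization in the second paragraph: one has to verify at each stage that the vanishing hypothesis needed to invoke Lemma \ref{lem5} in the next slot persists after the previous factorization. Once this is handled, the argument reduces to an immediate appeal to the universal property of $\Lambda^mG''$ together with Lemma \ref{lem4}.
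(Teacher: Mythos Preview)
Your proof is correct and follows the same route as the paper: propagate the vanishing to every slot via antisymmetry, iteratively apply Lemma~\ref{lem5} to factor $\phi$ through $\pi^m$, then use Lemma~\ref{lem4} together with $\Lambda^mG''=0$ to conclude. One small point: your pointwise ``lift $g''$ to $g$'' argument is informal, since an epimorphism of group schemes need not be surjective on $T$-points; the paper makes the identical step rigorous by observing that the relevant restriction composed with the epimorphism $\widehat\pi=\pi^{i-1}\times\Id_{G'}\times\Id_{G^{m-i}}$ vanishes, hence the restriction itself vanishes.
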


\begin{proof}[\textsc{Proof}]
Let $\phi:G^m\to H$ be an alternating morphism and assume that the
restriction $\phi|_{G'\times G^{m-1}}$ is zero. We will show that
there is a multilinear morphism $\phi':G''^m\to H$ such that
$\phi=\phi'\circ\pi^m$. The result will then follow, since by Lemma
\ref{lem5} $\phi'$ is also alternating, it is so inside the group
$\Alt(G''^m,H)\cong\Hom(\Lambda^mG,H)$, which is trivial by the
hypothesis. It follows that $\phi'$ and consequently $\phi$ are
zero.

Note that since $\phi$ is alternating and the restriction
$\phi|_{G'\times G^{m-1}}$ is zero, the restrictions
$\phi|_{G^i\times G'\times G^{m-i-1}}$ are zero for any $1\leq i\leq
m-1$.

Put $\phi_0=\phi$. We show by induction on $0\leq i\leq m$ that
there is a multilinear morphism $\phi_i:G''^i\times G^{m-i}\to H$
such that $\phi=\phi_i\circ(\pi^i\times \Id_{G^{m-i}})$. This is
clear for $i=0$, so let $i>0$ and assume that we have $\phi_{i-1}$
with the stated property. Consider the following commutative diagram
$$\xymatrix{
G^{i-1}\times G'\times G^{m-i}\ar[d]_{\widehat{\pi}}\ar@{^{
(}->}[r]^{\rho}&G^{i-1}\times G\times
G^{m-i}\ar[d]^{\widetilde{\pi}}\ar[dr]^{\phi}&\\
G''^{i-1}\times G'\times G^{m-i}\ar@{^{
(}->}[r]^{\rho'}&G''^{i-1}\times G\times G^{m-i}\ar[r]^{\qquad \quad
\phi_{i-1}}&H}$$where
$\widehat{\pi}=\pi^{i-1}\times\Id_{G'}\times\Id_{G^{m-i}}$,
$\widetilde{\pi}=\pi^{i-1}\times\Id_G\times \Id_{G^{m-i}}$ and
$\rho,\rho'$ are the inclusion morphisms. We have by hypothesis,
$0=\phi\circ \rho=\phi_{i-1}\circ\widetilde{\pi}\circ \rho$ which
implies that $\phi_{i-1}\circ\rho'\circ\widehat{\pi}=0$. The
morphism $\widehat{\pi}$ is epimorphic and so
$\phi_{i-1}\circ\rho'$, the restriction of $\phi_{i-1}$, is zero. We
can therefore apply Lemma \ref{lem6}, so there is a multilinear
morphism $\phi_i:G''^i\times G^{m-i}\to H$ such that
$\phi_i=\phi_{i-1}\circ(\Id_{G''^{i-1}}\times\pi\times\Id_{G^{m-i}})$.
We have thus $\phi=\phi_i\circ(\pi^i\times \Id_{G^{m-i}})$.

Now put $i=m$, the statement says that there is a multilinear
morphism $\phi_m:G''^m\to H$ with $\phi=\phi_m\circ\pi^m$. This
$\phi_m$ is the required $\phi'$.
\end{proof}

\begin{rem}
\label{rem11}
\begin{itemize}
\item[1)]In Lemma \ref{lem6}, obviously the other restriction maps, i.e., restrictions
to $G^r\times G'\times G^{n-r-1}$ for $1\leq r\leq n-1$ are
injective too.
\item[2)]It is clear that the image of the
restriction map in Lemma \ref{lem6} lies inside the group
$\Alt(G'\times G^{m-1},H)$. We have thus the injection
$$\Alt(G^m,H)\into \Alt(G'\times G^{m-1},H).$$
\end{itemize}
\xqed{\lozenge}\end{rem}

\begin{thm}
\label{lem7} Assume that $0\to G'\arrover{\iota} G\arrover{\pi}
G''\to 0$ is a short exact sequence of commutative group schemes.
Let $m$ be a nonnegative integer and write $m=m'+m''$ with non
negative integers $m'$ and $m''$. Consider the diagram
$$\xymatrix{
\Alt(G^m,H)\ar[r]^{\rho\qquad}&\Alt(G'^{m'}\times G^{m''},H)\\
&\Alt(G'^{m'}\times G''^{m''},H)\ar@{^{ (}->}[u]_{\pi^*}}$$ where
$\rho$ is the restriction map.
\begin{itemize}
\item[\emph{(a)}] If $\Lambda^{m''+1}G''=0$, then $\rho$ is injective.
\item[\emph{(b)}] If $\Lambda^{m'+1}G'=0$, then $\rho$ factors through $\pi^*$.
\item[\emph{(c)}] If both conditions hold, then there is a natural epimorphism
$$\zeta:\Lambda^{m'}G'\otimes\Lambda^{m''}G''\onto\Lambda^mG.$$
\item[\emph{(d)}] If furthermore the sequence is split, then the epimorphism
$\zeta$ is an isomorphism.
\end{itemize}
\end{thm}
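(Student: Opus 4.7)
I would handle (a)--(d) in order, reducing the combinatorial work to Propositions~\ref{prop2},~\ref{prop9},~\ref{prop11} and Lemmas~\ref{lem4}--\ref{lem6}. \emph{Part (a)} is by induction on $m'$; the base case $m'=0$ is trivial (where $\rho$ is the identity), and Proposition~\ref{prop13} supplies $\Lambda^{m''+2}G''=0$ to propagate the induction. Given $\phi\in\Alt(G^m,H)$ with $\phi|_{G'^{m'}\times G^{m''}}=0$, it suffices to show $\phi|_{G'^{m'-1}\times G^{m''+1}}=0$. Via Proposition~\ref{prop2} this restriction corresponds, after reordering factors, to a morphism $\tilde\phi:G^{m''+1}\to\innMult(G'^{m'-1},H)$ that is alternating in $G^{m''+1}$ (Proposition~\ref{prop11} transports the alternating condition across the isomorphism) and whose restriction to $G'\times G^{m''}$ vanishes. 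Applying Lemma~\ref{lem6} with $m$ replaced by $m''+1$ and target replaced by $\innMult(G'^{m'-1},H)$ forces $\tilde\phi=0$.

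\emph{Part (b)} begins by showing $\phi|_{G'^{m'+1}\times G^{m''-1}}=0$: by the same currying, this restriction corresponds to an alternating morphism $G'^{m'+1}\to\innMult(G^{m''-1},H)$, which by the universal property of $\Lambda^{m'+1}G'=0$ is zero. Antisymmetry of alternating morphisms (Remark~\ref{rem2}) then propagates the vanishing to any arrangement of factors with at least $m'+1$ of them forced into $G'$. I now apply Lemma~\ref{lem5} to $\rho(\phi)$ at positions $m'+1,\dots,m$ successively: at step $j$ the previously constructed $\phi_{j-1}:G'^{m'}\times G''^{j-1}\times G^{m''-j+1}\to H$ (with $\phi_0=\rho(\phi)$) has restriction at position $m'+j$ to $G'$ equal to zero --- one pulls back along the surjective $\Id\times\pi^{j-1}\times\Id$, invokes antisymmetry and the vanishing above, and transfers the conclusion back via surjectivity of $\pi^{j-1}$ --- so Lemma~\ref{lem5} yields $\phi_j$. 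After $m''$ iterations the resulting $\phi':=\phi_{m''}$ satisfies $\rho(\phi)=\pi^*(\phi')$ and lies in $\Alt(G'^{m'}\times G''^{m''},H)$: alternating in $G'^{m'}$ by inheritance and in $G''^{m''}$ by Lemma~\ref{lem4} applied to $\pi^{m''}$.

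Parts (a) and (b) together yield a natural injection $\bar\rho:\Alt(G^m,H)\hookrightarrow\Alt(G'^{m'}\times G''^{m''},H)$ functorial in $H$. Rewriting source and target as $\Hom(\Lambda^mG,H)$ and $\Hom(\Lambda^{m'}G'\otimes\Lambda^{m''}G'',H)$ (via Proposition~\ref{prop9} and the multi-factor extension noted in Remark~\ref{rem13}), Yoneda converts $\bar\rho$ into the required epimorphism $\zeta:\Lambda^{m'}G'\otimes\Lambda^{m''}G''\twoheadrightarrow\Lambda^mG$, settling \emph{(c)}. For \emph{(d)} I would construct an explicit section of $\bar\rho$ from the splitting: choose $q:G\to G'$ and $s:G''\to G$ with $q\circ\iota=\Id_{G'}$, $\pi\circ s=\Id_{G''}$, $\pi\circ\iota=0$, $q\circ s=0$, and for $\psi\in\Alt(G'^{m'}\times G''^{m''},H)$ set
$$\phi(g_1,\dots,g_m):=\sum_{\substack{A\sqcup B=\{1,\dots,m\}\\|A|=m'}}\varepsilon(A,B)\,\psi\bigl(q(g_{a_1}),\dots,q(g_{a_{m'}}),\pi(g_{b_1}),\dots,\pi(g_{b_{m''}})\bigr),$$
with $A=\{a_1<\dots<a_{m'}\}$, $B=\{b_1<\dots<b_{m''}\}$ and $\varepsilon(A,B)$ the shuffle sign. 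A combinatorial check shows $\phi$ is alternating; on specializing $g_i=\iota(g'_i)$ for $i\leq m'$, every term with $B\cap\{1,\dots,m'\}\neq\emptyset$ vanishes since $\pi\circ\iota=0$, so only the trivial shuffle survives and its value is $\pi^*(\psi)$. Thus $\bar\rho$ is bijective and $\zeta$ is an isomorphism.

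The step I expect to be the main obstacle is verifying in (d) that the shuffle formula genuinely defines an \emph{alternating} morphism on $G^m$ rather than merely a multilinear one: antisymmetry under interchanging two input slots $g_i\leftrightarrow g_j$ must be established by pairing each partition $(A,B)$ with an appropriate ``swapped'' partition and carefully combining the sign flip from $\varepsilon$ with the alternation of $\psi$ on $G'^{m'}$ and on $G''^{m''}$ separately, in direct analogy with the classical decomposition $\Lambda^m(M\oplus N)\cong\bigoplus_{p+q=m}\Lambda^pM\otimes\Lambda^qN$ in module theory.
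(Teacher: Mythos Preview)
Your proof is correct and mirrors the paper's argument essentially step for step: the same induction on $m'$ via Lemma~\ref{lem6} for (a), the same iterated use of Lemma~\ref{lem5} after the $\Lambda^{m'+1}G'=0$ vanishing for (b), Yoneda for (c), and the identical shuffle-sum section for (d). The paper likewise declares the alternating property of the shuffle formula ``straightforward''; your pairing idea is the right one, but be careful to check vanishing on the diagonal $g_i=g_j$ directly (terms with $i,j$ in the same block of the shuffle die by alternation of $\psi$, and mixed terms cancel in pairs) rather than infer it from antisymmetry, since the latter implication fails in characteristic~$2$.
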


\begin{proof}[\textsc{Proof}]
If $m=0$ then $m'=0=m''$ and all statements are trivially true, so
assume $m>0$. We prove each point of the proposition separately.
\begin{itemize}
\item[(a)] Fix $m$. We show by induction on $0\leq m'\leq m$
that the restriction map gives an injective map
$$\Alt(G^m,H)\into \Alt(G^{m''},\innMult(G'^{m'},H)).$$

If $m'=0$ then $m''=m$, and $\rho$ is the identity map, so there is
nothing to show. So assume that $0<m'\leq m$ and that the statement
is true for $m'-1$ and $m''+1$ in place of $m'$ and $m''$. Then
$\Lambda^{m''+1}G''=0$ implies $\Lambda^{m''+2}G''=0$ by Proposition
\ref{prop13}; so by the induction hypothesis we have an injection
$$\Alt(G^m,H)\into \Alt(G^{m''+1},\innMult(G'^{m'-1},H)).$$ Since by hypothesis we have
$\Lambda^{m''+1}G''=0$ we can use Lemma \ref{lem6}, and we have thus
an injection
$$\Alt(G^{m''+1},\innMult(G'^{m'-1},H))\into\Alt(G^{m''}\times
G',\innMult(G'^{m'-1},H)).$$ The latter group is inside the group
$$\Alt(G^{m''},\innHom(G',\innMult(G'^{m'-1},H))).$$ By Proposition
\ref{prop3}, $\innHom(G',\innMult(G'^{m'-1},H))\cong
\innMult(G'^{m'},H)$.\\ Putting these together, we conclude that
there is an injection
$$\Alt(G^m,H)\into \Alt(G^{m''},\innMult(G'^{m'},H)).$$ Following through the above isomorphisms and inclusions, one verifies that this injection
is induced by the restriction map.

Under the isomorphism $$\Mult(G^{m''},\innMult(G'^{m'},H))\cong
\Mult(G^{m''}\times G'^{m'},H)$$ given by Proposition \ref{prop2},
the image of $\Alt(G^m,H)$ by $\iota$ lies inside the group
$\Alt(G^{m''}\times G'^{m'},H)$ and we can easily see that the
injection $$\Alt(G^m,H)\into \Alt(G^{m''}\times G'^{m'},H)$$ thus
obtained is given by the restriction map.

\item[(b)] Choose an alternating multilinear morphism $\phi:G^m\to H$ and write
$\phi_0$ for the restriction $\phi|_{G'^{m'}\times G^{m''}}$. For
any $0\leq j\leq m''-1$ the restriction of $\phi_0$ to the subgroup
scheme $G'^{m'}\times G^j\times G'\times G^{m''-j-1}$ belongs to the
group
$$\Alt(G'^{m'}\times G^j\times G'\times G^{m''-j-1},H)\into\Alt(G'^{m'+1},\innMult(G^{m''-1},H)).$$ The latter group is isomorphic to
$\Hom(\Lambda^{m'+1}G',\innMult(G^{m''-1},H))$, which is zero by
assumption. Therefore, the restriction $\phi_0|_{G'^{m'}\times
G^j\times G'\times G^{m''-j-1}}$ is zero.

Now we show by induction on $0\leq i\leq m''$, that there exists a
multilinear morphism $\phi_i:G''^i\times G^{m''-i}\times G'^{m'}\to
H$ such that the composition $$G^i\times G^{m''-i}\times
G'^{m'}\arrover{\overline{\pi}}G''^i\times G^{m''-i}\times
G'^{m'}\arrover{\phi_i}H$$ is $\phi_0$, where
$\overline{\pi}=\pi^i\times \Id_{G^{m''-i}}\times \Id_{G'^{m'}}$. If
$i=0$ then we have nothing to show, so let $i<m''$ and assume that
we have constructed $\phi_i$ with the desired property and we
construct $\phi_{i+1}$. Consider the following commutative diagram:
$$\xymatrix{
G^i\times G'\times G^{m''-i-1}\times
G'^{m'}\ar@{->>}[d]_{\widehat{\pi}}\ar@{^{ (}->}[r]^{\quad
\rho}&G^i\times
G^{m''-i}\times G'^{m'}\ar@{->>}[d]^{\overline{\pi}}\ar[dr]^{\phi_0}&\\
G''^i\times G'\times G^{m''-i-1}\times G'^{m'}\ar@{^{
(}->}[r]^{\quad \rho'}&G''^i\times G^{m''-i}\times
G'^{m'}\ar[r]^{\qquad \quad\phi_i}&H.}$$ As we have said above, the
restriction of $\phi_0$, $\phi_0\circ \rho$, is zero. By the
induction hypothesis, we have $\phi_0=\phi_i\circ\overline{\pi}$ and
therefore, $0=\phi_0\circ
\rho=\phi_i\circ\overline{\pi}\circ\rho=\phi_i\circ\rho'\circ\widehat{\pi}$.
The morphism $\widehat{\pi}$ being epimorphic, we conclude that the
restriction of $\phi_i$, i.e., $\phi_i\circ \rho'$ is zero. This
allows us to use Lemma \ref{lem5} in order to find a multilinear
morphism $\phi_{i+1}:G''^{i+1}\times G^{m''-i-1}\times G'^{m'}\to H$
such that
$\phi_i=\phi_{i+1}\circ(\Id_{G''^i}\times\pi\times\Id_{G^{m''-i-1}}\times\Id_{G'^{m'}})$.
It follows at once that $\phi_0=\phi_{i+1}\circ(\pi^{i+1}\times
\Id_{G^{m-i-2}}\times \Id_{G'})$.

Put $i=m''$, then the statement says that there is a multilinear
morphism $\phi_{m''}:G''^{m''}\times G'^{m'}\to H$ such that
$\phi_0=\phi_{m''}\circ(\pi^{m''}\times \Id_{G'^{m'}})$. Since
$\phi_0$ is alternating, by Remark \ref{rem10}, $\phi_{m''}$ is also
alternating.

\item[(c)] If both conditions hold, then by (a), $\rho$ is
injective and therefore the homomorphism
$\Alt(G^m,H)\to\Alt(G'^{m'}\times G''^{m''},H)$ defined in (b) is
injective as well. So we obtain
$$\Hom(\Lambda^mG,H)\cong\Alt(G^m,H)\into\Alt(G'^{m'}\times
G''^{m''},H)\overset{\ref{prop9}}{\cong}$$
$$\Hom(\Lambda^{m'}G'\otimes\Lambda^{m''}G'',H)$$ which is
natural, in other words we have a natural injection of functors
$$\tau:\Hom(\Lambda^mG,-)\into\Hom(\Lambda^{m'}G'\otimes\Lambda^{m''}G'',-).$$
It is a known fact that any natural transformation between such
functors is induced by a unique morphism
$\zeta:\Lambda^{m'}G'\otimes\Lambda^{m''}G''\to \Lambda^mG$, in
fact, this morphism is the image of the identity morphism of
$\Lambda^mG$ under this transformation. This means that for any
commutative group scheme $H$,
$\tau_H:\Hom(\Lambda^mG,H)\to\Hom(\Lambda^{m'}G'\otimes\Lambda^{m''}G'',H)$
sends a morphism $f:\Lambda^mG\to H$ to the morphism $f\circ \zeta$.
The injectivity of $\tau$ implies that $\zeta$ is epimorphic.

\item[(d)] 
Let $s:G''\to G$ be a section of $\pi$, i.e., $\pi\circ s=\Id_{G''}$
and $r:G\to G'$ the corresponding retraction of $\iota$, that is,
$r\circ\iota=\Id$ and that the short sequence $$0\to
G''\arrover{s}G\arrover{r}G'\to0$$ is exact. Then we show that the
map $\mu:\Alt(G^m,H)\to \Alt(G'^{m'}\times G''^{m''},H)$ whose
composition with $\pi^*$ is $\rho$ (given by (b)) is induced by the
inclusion $j:=\iota^{m'}\times s^{m''}:G'^{m'}\times G''^{m''}\into
G^m$. Indeed, given a morphism $f\in\Alt(G^m,H)$, we have
$\rho(f)=\pi^*(\mu(f))$, or in other words, $\mu(f)\circ
(\Id_{G'}^{m'}\times \pi^{m''})=f\circ (\iota^{m'}\times
\Id_G^{m''})$. Hence the following diagram is commutative
$$\xymatrix{
G'^{m'}\times G''^{m''}\ar@{^{ (}->}[d]_{\widetilde{s}}\ar@/_3pc/[dd]_{\Id}\ar@{^{ (}->}_{j\,}[dr]\ar[drrr]^{\mu(f)}&&\\
G'^{m'}\times G^{m''}\ar@{->>}[d]_{\widetilde{\pi}}\ar@{^{ (}->}[r]^{\quad i}&G^m\ar[rr]^{f\quad}&&H\\
G'^{m'}\times G''^{m''}\ar[urrr]_{\mu(f)}&&}$$ where $\widetilde{s},
\widetilde{\pi}$ and $i$ are respectively the morphisms
$\Id_{G'}^{m'}\times s^{m''}, \Id_{G'}^{m'}\times \pi^{m''}$ and the
inclusion $\iota^{m'}\times \Id_G^{m''}$. Consequently,
$\mu(f)=f\circ j$. This shows that $\mu$ is induced by $j$ as we
claimed.

Now define a morphism $\omega:G^m\to G'^{m'}\times G''^{m''}$ as
follows: for any $k$-algebra $R$ $\omega$ sends an element
$(g_1,\dots,g_m)\in G(R)^m$ to
$$\sum_{\sigma}\text{sgn}(\sigma,\tau)(r(g_{\sigma(1)}),\dots,r(g_{\sigma(m')}),\pi(g_{\tau(1)}),\dots,\pi(g_{\tau(m'')}))$$
where the sum runs over all length $m'$ subsequences
$\sigma=(\sigma(1),\dots,\sigma(m'))$ of $(1,2,\dots,m)$ with
complementary subsequences $\tau=(\tau(1),\dots,\tau(m''))$ and
$\text{sgn}(\sigma,\tau)$ is the signature of $(\sigma,\tau)$ as a
permutation of $m$ elements. This morphism induces a homomorphism
$$\omega^*:\Alt(G'^{m'}\times G''^{m''},H)\to \Mult(G^m,H)$$and it
is straightforward to see that in fact the image lies inside the
subgroup $\Alt(G^m,H)$. We also denote by $\omega^*$ the
homomorphism $\Alt(G'^{m'}\times G''^{m''},H)\to \Alt(G^m,H)$
obtained by restricting the codomain of $\omega^*$. Since the
composites $r\circ s$ and $\pi\circ\iota$ are trivial and
$r\circ\iota$ and $\pi\circ s$ are the identity morphisms, we see
that the composition $\omega\circ j$ is the identity morphism of
$G'^{m'}\times G''^{m''}$. Therefore the composite
$\mu\circ\omega^*$ is the identity homomorphism. Consequently, the
homomorphism $\mu:\Alt(G^m,H)\to\Alt(G'^{m'}\times G''^{m''},H)$ is
an epimorphism. We know from $(c)$ that it is a monomorphism, and
hence it is an isomorphism. We obtain thus
$$\Hom(\Lambda^mG,H)\cong\Alt(G^m,H)\isoto$$
$$\Alt(G'^{m'}\times
G''^{m''},H)\cong\Hom(\Lambda^{m'}G'\otimes\Lambda^{m''}G'',H).$$ As
we know, this homomorphism is induced by the morphism
$$\zeta:\Lambda^{m'}G'\otimes\Lambda^{m''}G''\to \Lambda^mG.$$
Since it is an isomorphism, the morphism $\zeta$ must be an
isomorphism as well.
\end{itemize}
\end{proof}

\begin{prop}
\label{prop7} Let $G$ be a local-local commutative group scheme of
order $p^n$ with $p$ an odd prime number. We have:
\begin{itemize}
\item[\emph{(a)}] $\Lambda^m G=0$ for all $m>n$.
\item[\emph{(b)}] $\Lambda^n G$ is a quotient  of $\alpha_p^{\otimes n}$.
\end{itemize}
\end{prop}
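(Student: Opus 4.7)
The plan is to argue by induction on $n$, using Theorem \ref{lem7}(c) repeatedly on a short exact sequence that strips off an $\alpha_p$ from $G$.

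The first step is a structural preliminary: I claim that any nontrivial local-local commutative group scheme $G$ of order $p^n$ admits a short exact sequence
$$0\to G'\arrover{\iota}G\arrover{\pi}G''\to 0$$
with $G'\cong\alpha_p$ and $G''$ local-local of order $p^{n-1}$. To see this, recall that local-local means that both the Frobenius $F$ and the Verschiebung $V$ act nilpotently on $G$, so $\kernel(F\colon G\to G^{(p)})$ is a nontrivial subgroup scheme of $G$. Pick any subgroup $G'$ of order $p$ inside $\kernel F$ (for example by Jordan--H\"older); since $G'$ is killed by $F$ and has order $p$, it is isomorphic to $\alpha_p$ (neither $\mu_p$ nor an \'etale form of $\BZ/p$ is killed by $F$). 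Quotients and subgroups of local group schemes are local, and dualizing the sequence shows $G''^*\subset G^*$ is local as well, so $G''=G/G'$ is local-local of order $p^{n-1}$.

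Next I set up the induction. The base case $n=1$ gives $G\cong\alpha_p$, so (a) follows from Example \ref{ex7} (which yields $\Lambda^m\alpha_p=0$ for $m\ge 2$ when $p>2$) and (b) is the tautology $\Lambda^1\alpha_p=\alpha_p=\alpha_p^{\otimes 1}$. For the inductive step, fix a sequence as above and apply Theorem \ref{lem7}(c). For part (a), suppose $m>n$ and take the decomposition $m=m'+m''$ with $m'=1$ and $m''=m-1\ge n$. Then $\Lambda^{m'+1}G'=\Lambda^2\alpha_p=0$ by the base case, and $\Lambda^{m''+1}G''=0$ by the inductive hypothesis (a) applied to $G''$, since $m''+1>n-1$. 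Theorem \ref{lem7}(c) then supplies an epimorphism
$$\Lambda^1 G'\otimes \Lambda^{m-1}G''\onto \Lambda^m G,$$
and the right factor $\Lambda^{m-1}G''$ is already zero by the inductive hypothesis (a) (since $m-1\ge n>n-1$), so $\Lambda^m G=0$.

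For part (b), take $m=n$ with $m'=1$ and $m''=n-1$. The hypothesis $\Lambda^{m'+1}G'=\Lambda^2\alpha_p=0$ holds since $p>2$, and $\Lambda^{m''+1}G''=\Lambda^n G''=0$ by the inductive hypothesis (a) applied to $G''$ (order $p^{n-1}$, and $n>n-1$). Thus Theorem \ref{lem7}(c) yields an epimorphism
$$\alpha_p\otimes \Lambda^{n-1}G''\onto \Lambda^n G.$$
By the inductive hypothesis (b), $\Lambda^{n-1}G''$ is a quotient of $\alpha_p^{\otimes(n-1)}$, and the tensor product is right exact in each argument (Remark \ref{rem8}), so $\alpha_p\otimes\Lambda^{n-1}G''$ is a quotient of $\alpha_p\otimes\alpha_p^{\otimes(n-1)}=\alpha_p^{\otimes n}$. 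Composing, $\Lambda^n G$ is a quotient of $\alpha_p^{\otimes n}$, which completes the induction.

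The main obstacle is the structural preliminary, namely producing the subgroup $G'\cong\alpha_p$: one must know that over the base field the kernel of Frobenius on a local-local group of order $p^n$ really does contain a copy of $\alpha_p$ and not, say, only twisted forms, which uses the fact that a group scheme of order $p$ killed by $F$ is forced to be $\alpha_p$. Once the filtration step is in hand, the rest of the argument is a purely formal iteration of Theorem \ref{lem7}(c) together with $\Lambda^2\alpha_p=0$.
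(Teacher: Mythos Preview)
Your proof is correct and follows essentially the same inductive strategy as the paper: apply Theorem \ref{lem7}(c) to a short exact sequence, use Example \ref{ex7} for the base case, and conclude via right exactness of the tensor product. The paper takes an arbitrary proper nontrivial subgroup $G'\subset G$ (so both $n',n''<n$ and the induction applies to each piece) rather than insisting on $G'\cong\alpha_p$, which makes your structural preliminary unnecessary but otherwise changes nothing.

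One small correction in that preliminary: $\mu_p$ \emph{is} killed by Frobenius, so your parenthetical is wrong. The correct way to exclude $\mu_p$ is that any subgroup of a local-local group scheme is again local-local, and $\mu_p$ is not (its Verschiebung is an isomorphism); hence the only order-$p$ possibility is $\alpha_p$.
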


\begin{proof}[\textsc{Proof}]
We know that any subgroup of a local-local commutative group scheme
is again local-local. We can thus prove the proposition by induction
on $n$. If $n=1$, then $G$ is necessarily isomorphic to $\alpha_p$,
hence the equality $\Lambda^m\alpha_p=0$ follows from Example
\ref{ex7} and we have obviously $\Lambda^1\alpha_p=\alpha_p$, which
is a quotient of itself. So assume that $n>1$ and that the two
statements are true for positive integers less that $n$. Take a
proper subgroup scheme $G'$ of $G$ and let $G''$ be the quotient of
$G$ by $G'$, that is, we have a short exact sequence $0\to G'\to
G\to G''\to 0.$ We know that the order of commutative group schemes
is multiplicative, i.e., $|G|=|G'|\cdot|G''|$. So if $|G'|=p^{n'}$
and $|G''|=p^{n''}$, we have $n=n'+n''$. Take $m\geq n$, we can
write $m=m'+m''$ where $m''=n''$ and $m'=m-n''$ and we have $m'\geq
n'$. Since $G'$ is a proper subgroup scheme of $G$, we have $n'<n$
and so $n''<n$. Therefore, by the induction hypothesis we have
$\Lambda^{m'+1}G'=0=\Lambda^{m''+1}G''$. We can thus apply the third
point of Theorem \ref{lem7}, and we have an epimorphism
$$\zeta:\Lambda^{m'}G'\otimes\Lambda^{m''}G''\onto\Lambda^mG.$$

If $m>n$, then $m'>n'$ and we have $\Lambda^{m'}G'=0$ by the
induction hypothesis and so the tensor product
$\Lambda^{m'}G'\otimes\Lambda^{m''}G''$ vanishes. Since $\zeta$ is
epimorphic, we conclude that $\Lambda^mG=0$.

If $m=n$, then $m'=n'$. By the induction hypothesis, we have
epimorphisms $\xi':\alpha_p^{\otimes n'}\onto\Lambda^{n'}G'$ and
$\xi'':\alpha_p^{\otimes n''}\onto\Lambda^{n''}G''$. As we said in
Remark \ref{rem8}, the tensor product is right exact, and we have
thus an epimorphism
$$\alpha_p^{\otimes n}\cong\alpha_p^{\otimes
n'}\otimes\alpha_p^{\otimes
n''}\ontoover{\xi'\otimes\xi''}\Lambda^{n'}G'\otimes\Lambda^{n''}G''.$$
Composing this epimorphism with $\zeta$ we obtain the desired
epimorphism $$\alpha_p^{\otimes n}\onto\Lambda^nG.$$
\end{proof}

\begin{cor}
\label{cor1} Let $G$ and $H$ be local-local commutative group
schemes of order $p^n$ and $p^m$ respectively, with $p$ an odd prime
number. Then we have a natural isomorphism
$$\Lambda^{n+m}(G\oplus H)\cong\Lambda^nG\otimes\Lambda^mH.$$
\end{cor}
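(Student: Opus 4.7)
The plan is to reduce the corollary directly to Theorem \ref{lem7}(d) applied to the canonical split exact sequence
$$0\to G\arrover{\iota} G\oplus H\arrover{\pi} H\to 0,$$
with the parameters $m':=n$, $m'':=m$, so that $m'+m''=n+m$ is the exponent under consideration. The conclusion of (d) in this setting is exactly the desired isomorphism $\Lambda^{n}G\otimes\Lambda^{m}H\isoto\Lambda^{n+m}(G\oplus H)$, so the only thing I must verify is that the two vanishing hypotheses of part (d) are satisfied.

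First I would check the hypothesis $\Lambda^{m''+1}G''=0$, which in our notation reads $\Lambda^{m+1}H=0$. Since $H$ is local-local of order $p^{m}$ with $p$ odd, this is immediate from Proposition \ref{prop7}(a). Symmetrically, $\Lambda^{m'+1}G'=\Lambda^{n+1}G=0$ by the same proposition applied to $G$. The sequence above is split by construction (the two projections and the two inclusions of a direct sum provide section and retraction), so all the hypotheses of Theorem \ref{lem7}(d) are in force.

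Applying Theorem \ref{lem7}(d) then yields the natural isomorphism
$$\zeta:\Lambda^{n}G\otimes\Lambda^{m}H\isoto\Lambda^{n+m}(G\oplus H),$$
which is precisely the assertion of the corollary. There is no real obstacle: the substance of the work is contained in Theorem \ref{lem7}(d) and Proposition \ref{prop7}(a), and the corollary is essentially a bookkeeping statement assembling these two results for the split exact sequence coming from a direct sum decomposition.
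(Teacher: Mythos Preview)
Your proof is correct and follows essentially the same approach as the paper: invoke Proposition~\ref{prop7}(a) to obtain $\Lambda^{n+1}G=0=\Lambda^{m+1}H$, then apply Theorem~\ref{lem7}(d) to the split exact sequence $0\to G\to G\oplus H\to H\to 0$. The paper's own argument is identical, only stated more tersely.
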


\begin{proof}[\textsc{Proof}]
By Proposition \ref{prop7}, we know that
$\Lambda^{n+1}G=0=\Lambda^{m+1}H$. The result follows at once from
the last point of Theorem \ref{lem7}.
\end{proof}

\begin{lem}
\label{lem10} Let $G$ be an affine commutative group scheme and
$H,F$ two finite subgroup schemes. Then there is a finite subgroup
scheme of $G$ that contains both $H$ and $F$.
\end{lem}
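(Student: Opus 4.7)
The natural candidate is $K:=H+F$, which I would define as the scheme-theoretic image of the sum morphism $\sigma\colon H\times F\to G$, $(h,f)\mapsto h+f$. The plan is to verify in turn that $K$ is finite, that it contains both $H$ and $F$, and that it is closed under the group operations of $G$; the last condition then endows $K$ with the structure of a finite subgroup scheme of $G$.

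Finiteness would be immediate: $H\times F$ is finite, and $\mathcal{O}_K$ injects into the finite sheaf $\sigma_*\mathcal{O}_{H\times F}$ by construction of the scheme-theoretic image, hence is finite itself. The containments $H\subset K$ and $F\subset K$ follow from the universal property of the scheme-theoretic image applied to the factorizations of the closed immersions $H\hookrightarrow G$ and $F\hookrightarrow G$ through $\sigma$, via $h\mapsto(h,0)$ and $f\mapsto(0,f)$ respectively.

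The subgroup structure is where I expect to do real work. The identity lies in $K$ because $0=\sigma(0,0)$; closure under inversion follows from the identity $[-1]_G\circ\sigma=\sigma\circ([-1]_H\times[-1]_F)$, which exhibits $[-1]_G\circ\sigma$ as factoring through $K$, so by the scheme-theoretic dominance of $\sigma\colon H\times F\to K$ the restriction $[-1]_G|_K$ itself factors through $K\hookrightarrow G$. For closure under addition I would exploit the commutativity of $G$: the morphism
$$(H\times F)\times(H\times F)\longrightarrow G,\qquad (h_1,f_1,h_2,f_2)\longmapsto (h_1+h_2)+(f_1+f_2),$$
factors through $\sigma$ via the shuffle $(h_1,f_1,h_2,f_2)\mapsto(h_1+h_2,f_1+f_2)$ and therefore has scheme-theoretic image inside $K$; but it also equals $m_G\circ(\sigma\times\sigma)$, so $m_G$ carries $K\times K$ into $K$ as soon as $\sigma\times\sigma$ is itself scheme-theoretically dominant onto $K\times K$. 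This last dominance is the step I expect to be the main obstacle: it amounts to preserving the injection $\mathcal{O}_K\hookrightarrow\sigma_*\mathcal{O}_{H\times F}$ after tensoring with itself, which is automatic over a field by flatness but may require a flatness or base-change argument in greater generality.
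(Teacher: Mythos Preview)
Your candidate $K$ and the paper's are the same, and your argument is correct over a field (which is the only case the paper actually needs). The difference is that you treat $\sigma\colon H\times F\to G$ as a mere morphism of schemes and then verify by hand that its scheme-theoretic image is closed under the group operations, whereas the paper observes that, because $G$ is \emph{commutative}, the map $\mu\colon H\oplus F\to G$, $(h,f)\mapsto h+f$, is already a \emph{homomorphism} of commutative group schemes. In the abelian category of affine commutative group schemes over a field the image of a homomorphism is automatically a subgroup scheme, so all your work on identity, inversion, addition, and the dominance of $\sigma\times\sigma$ collapses into a single sentence. Your route has the merit of making explicit what ``image'' means at the level of schemes, but the shortcut is worth knowing: the commutativity hypothesis is doing exactly this work.
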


\begin{proof}[\textsc{Proof}]
Consider the homomorphism $$\mu:F\oplus H\to G,\quad
(f,h)\mapsto\mu(f,h):=f+h.$$ The image of this homomorphism contains
both $F$ and $H$ and its order is less than or equal to the order of
$F\oplus H$ which is finite. It is thus a finite subgroup scheme of
$G$.
\end{proof}

\begin{lem}
\label{quotinvlim} Let $I$ be a filtered system and for any $i\in
I$, $0\to N_i\arrover{f_i}G_i\arrover{g_i}Q_i\to0$ be a short exact
sequence of affine commutative group schemes. Then the short
sequence
$$0\longrightarrow\invlim_{i\in I}N_i\arrover{\invlim f_i}\invlim_{i\in I}G_i\arrover{\invlim
g_i}\invlim_{i\in I}Q_i\longrightarrow0$$ is exact, in other words,
taking filtered inverse limits is an exact functor.
\end{lem}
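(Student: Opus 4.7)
The plan is to translate the problem into the language of commutative Hopf algebras via the anti-equivalence between affine commutative group schemes over $k$ and commutative Hopf algebras, under which filtered inverse limits of group schemes correspond to filtered colimits of Hopf algebras. Denote by $B_i$, $A_i$, $D_i$ the Hopf algebras representing $Q_i$, $G_i$, $N_i$. As shown in the proof of Proposition \ref{prop1}, the short exact sequence $0\to N_i\to G_i\to Q_i\to 0$ translates to an injection $B_i\hookrightarrow A_i$ together with an isomorphism $D_i\cong A_i/(I_{B_i}\cdot A_i)$, where $I_{B_i}$ is the augmentation ideal of $B_i$.

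First, I would handle the left-exactness, namely that $\invlim N_i$ is the kernel of $\invlim g_i$. This is formal: inverse limits commute with kernels because both are limits, so a compatible family of $T$-points of the $G_i$ that dies in every $Q_i$ lifts to a unique compatible family of $T$-points of the $N_i$, for any test scheme $T$.

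Next, I would prove the surjectivity of $\invlim g_i$ by passing to Hopf algebras. Since filtered colimits of $k$-modules are exact, the injections $B_i\hookrightarrow A_i$ pass to an injection $\colim B_i\hookrightarrow \colim A_i$. A morphism of affine commutative group schemes over a field is an epimorphism precisely when the dual map of Hopf algebras is injective (over a field, any injection of commutative Hopf algebras is faithfully flat, so the dual map of schemes is fpqc-surjective, hence an epimorphism). This yields the epimorphism $\invlim g_i:\invlim G_i\twoheadrightarrow\invlim Q_i$.

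Finally, I would check that these two pieces really fit together. Using that filtered colimits commute with quotients, tensor products, and the formation of augmentation ideals, one obtains
$$\colim A_i\,/\,(I_{\colim B_i}\cdot\colim A_i)\;\cong\;\colim\bigl(A_i/(I_{B_i}\cdot A_i)\bigr)\;\cong\;\colim D_i,$$
which represents both $\invlim N_i$ and the scheme-theoretic kernel of $\invlim g_i$. The main subtlety is the input from Hopf algebra theory that an injection of commutative Hopf algebras over a field corresponds to an epimorphism of affine group schemes; everything else reduces to the well-known exactness of filtered colimits in the category of $k$-vector spaces.
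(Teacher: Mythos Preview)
Your proposal is correct and shares the paper's core strategy: dualize to Hopf algebras and exploit the exactness of filtered colimits of $k$-vector spaces, together with the fact that an injection of commutative Hopf algebras over a field corresponds to an epimorphism of affine group schemes. The organization differs in one respect worth noting. For exactness in the middle you invoke the purely formal fact that limits commute with limits, which immediately identifies $\invlim N_i$ with $\ker(\invlim g_i)$. The paper instead verifies this by hand on the Hopf-algebra side, using the description of the coordinate ring of the quotient $G/N$ as the subalgebra of $N$-invariants $\{x\in A:\Delta x\equiv x\otimes 1 \bmod A\otimes J\}$ and checking directly that this subalgebra coincides with the filtered union $\bigcup_i C_i$. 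Your route is shorter; the paper's is more explicit. Finally, observe that your third step is redundant: once step one gives $\invlim N_i=\ker(\invlim g_i)$ and step two gives surjectivity of $\invlim g_i$, the short sequence is already exact and nothing further needs checking.
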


\begin{proof}[\textsc{Proof}. ]
To simplify the notation, we denote by $N, G$ and $Q$ the group
schemes $\invlim N_i, \invlim G_i$ and $\invlim Q_i$. Let $B_i, A_i$
and $C_i$ denote respectively the Hopf algebras associated to the
group schemes $N_i, G_i$ and $Q_i$. Let also $B, A$ and $C$ denote
respectively the Hopf algebras of the group schemes $N, G$ and $Q$,
i.e., $B=\bigcup_iB_i, A=\bigcup_iA_i$ and $C=\bigcup_iC_i$.
Finally, let $f_i:N_i\to G_i$ and $g_i:G_i\to Q_i$ be respectively
the morphism associated to the morphisms $f'_i:A_i\to B_i$ and
$g'_i:C_i\to A_i$.

The morphism $f:=\invlim f_i:N\to G$ is associated to the morphism
$\bigcup_i f'_i:\bigcup_iA_i\to\bigcup_iB_i$. Since $f_i'$ is
surjective for all $i\in I$, their union $\bigcup_if'_i$ is
surjective too and consequently $\invlim f_i$ is a monomorphism.
Similarly, since each $g'_i:C_i\to A_i$ is injective, the union
$\bigcup_ig'i:\bigcup_iC_i\to\bigcup_iA_i$ is injective and so the
morphism $g:=\invlim g_i$ is an epimorphism. It remains to show that
$Q$ is the quotient of $f:N\to G$.

Let $Q'$, with associated Hopf algebra $C'$, be the quotient of $f$,
i.e., we have a short exact sequence
$$0\to N\arrover{f}G\arrover{}Q'\to0.$$
We know that $C'$ equals
$$\{ x\in A\ | \ \Delta x\equiv x\otimes 1\  \text{mod}\ A\otimes J\},$$
the subspace of the regular representation where $N$ acts trivially.

We have for all $i\in I$ a commutative diagram
$$\xymatrix{
0\ar[r]&N\ar[r]\ar@{->>}[d]&G\ar@{->>}[d]\ar[r]&Q'\ar[r]&0\\
0\ar[r]&N_i\ar[r]&G_i\ar[r]&Q_i\ar[r]&0.}$$ We want to show that
$Q'=\invlim Q_i$. Since the composite $N\rightarrow G\rightarrow
G_i\rightarrow Q_i$ is trivial, there exists a unique morphism
$Q\rightarrow Q_i$ which makes the right square in the diagram
commute and since the composite $G\rightarrow G_i\rightarrow Q_i$ is
an epimorphism the induced morphism $Q'\to Q_i$ is an epimorphism
too. We can thus complete the diagram as follows

$$\xymatrix{
0\ar[r]&N\ar[r]\ar@{->>}[d]&G\ar@{->>}[d]\ar[r]&Q'\ar[r]\ar@{->>}[d]&0\\
0\ar[r]&N_i\ar[r]&G_i\ar[r]&Q_i\ar[r]&0.}$$

Writing $B_i=A_i/J_i$ and expressing the above commutative diagram
in terms of Hopf algebras we obtain for all $i\in I$ the following
commutative diagram

$$\xymatrix{
C_i\ar@{^{ (}->}[r]\ar@{^{ (}->}[d]&A_i\ar@{^{ (}->}[d]\ar@{->>}[r]&A_i/J_i\ar@{^{ (}->}[d]\\
C'\ar@{^{ (}->}[r]&A\ar@{->>}[r]&A/J. }$$

It follows then that $J_i\subset J$ and $J_i=J\cap A_i$. We can also
deduce from this that $J_i=A_i\cap J_j$ whenever $A_i\subset A_j$.
The inclusions $C_i\subset C'$ give an inclusion $\bigcup_i
C_i\subset C'$ and we should prove that this inclusion is in fact an
equality. Note that the union $\bigcup_i C_i$ is filtered, for if
given two indices $i$ and $j$ there is an index $l$ such that $A_i,
A_j\subset A_l$ and we have
$$C_l=\{ x\in A_l\ |\ \Delta x\equiv x\otimes 1\
\text{mod}\ A_l\otimes J_l \}$$ contains both $C_i$ and $C_j$. Now
let $x\in C'$, so $\Delta x-x\otimes1 \in A\otimes J$. Since
$\bigcup_iA_i=A$, $x$ is in some $A_i$ and so $\Delta x-x\otimes1
\in A_i\otimes A_i$ which implies that $\Delta x-x\otimes1 \in
A_i\otimes A_i\cap A\otimes J=A_i\otimes (A_i\cap J)=A_i\otimes J_i$
and therefore $x$ is in $C_i$. It follows at once
that $Q'=\invlim Q_i$.\\
\end{proof}

\begin{lem}
\label{lem11} Let $\BG_a^*$ denote the group scheme
$\underset{G_i\subset\BG_a}{\invlim}G_i^*$ where the limit is over
all finite subgroup schemes of $\BG_a$. Then we have a short exact
sequence
$$\BG_a^{*^{(p)}}\overset{V}{\to}\BG_a^*\to\alpha_p\to0$$where $V$ is
the Verschiebung.
\end{lem}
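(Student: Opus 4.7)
The strategy is to compute on each finite piece and then pass to the limit via the just-proved Lemma \ref{quotinvlim}. Since $\alpha_p$ is itself a finite subgroup scheme of $\BG_a$, the inverse limit comes equipped with a natural structural projection $\pi:\BG_a^*=\invlim G_i^*\onto\alpha_p^*\cong\alpha_p$, which will be the second map in the sequence. The Verschiebung $V:\BG_a^{*(p)}\to\BG_a^*$ is defined as $\invlim V_i$, where $V_i:G_i^{*(p)}\to G_i^*$ is the Verschiebung on the finite piece; these form a compatible system by functoriality. By Lemma \ref{lem10} the subsystem of those $G_i$ that contain $\alpha_p$ is cofinal in the full system of finite subgroup schemes of $\BG_a$, so the limit may be computed over this cofinal subsystem throughout.

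For each such $G_i\supset\alpha_p$, $V_i$ is by definition the Cartier dual of the Frobenius $F_i:G_i\to G_i^{(p)}$, which is the restriction of $F_{\BG_a}$; consequently $\kernel F_i=G_i\cap\alpha_p=\alpha_p$. Dualizing the short exact sequence $0\to\alpha_p\into G_i\onto\image(F_i)\to 0$ via the exact contravariant functor of Cartier duality, and using $\alpha_p^*\cong\alpha_p$, produces the natural short exact sequence
$$0\to\image(V_i)\to G_i^*\arrover{\pi_i}\alpha_p\to 0,$$
in which $\pi_i$ is the Cartier dual of the inclusion $\alpha_p\into G_i$. The $\pi_i$ are compatible with the transition maps $G_j^*\onto G_i^*$ for $G_i\subset G_j$ both containing $\alpha_p$, and their inverse limit is precisely the map $\pi$ above; moreover, the induced transition maps on the cokernels $\alpha_p$ are all the identity, so the inverse limit of that constant system is $\alpha_p$ itself.

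Two applications of Lemma \ref{quotinvlim} then finish the argument. Applied to the displayed short exact sequence, it yields
$$0\to\invlim\image(V_i)\to\BG_a^*\arrover{\pi}\alpha_p\to 0,$$
which is exactness at $\BG_a^*$ and at $\alpha_p$. Applied to the short exact sequences $0\to\kernel V_i\to G_i^{*(p)}\to\image(V_i)\to 0$, it identifies $\image(V)=\image(\invlim V_i)$ with $\invlim\image(V_i)$, so $\image(V)$ coincides with $\kernel\pi$ as required.

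The principal obstacle is the interchange of kernels, images, and filtered inverse limits in the category of affine commutative group schemes; this is precisely what Lemma \ref{quotinvlim} provides. Without it, one would be forced into Mittag-Leffler-type estimates, but with it the remainder of the plan is essentially bookkeeping in a diagram whose rows are Cartier-dualized Frobenius sequences.
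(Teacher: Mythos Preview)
Your argument is correct and follows essentially the same route as the paper: restrict to the cofinal system of $G_i\supset\alpha_p$ via Lemma~\ref{lem10}, identify $V_i$ with the Cartier dual of the Frobenius $F_i$ so that $\cokernel V_i\cong(\kernel F_i)^*\cong\alpha_p$, and then invoke Lemma~\ref{quotinvlim} to pass to the inverse limit. The only difference is presentational: the paper phrases the conclusion as ``$\cokernel(\invlim V_i)=\invlim\cokernel V_i$'' in one stroke, whereas you unpack this into two applications of Lemma~\ref{quotinvlim} (one for the sequence $0\to\image V_i\to G_i^*\to\alpha_p\to0$, one for $0\to\kernel V_i\to G_i^{*(p)}\to\image V_i\to0$) and explicitly identify the quotient map $\BG_a^*\to\alpha_p$ with the structural projection dual to $\alpha_p\hookrightarrow G_i$; this is a welcome clarification rather than a different idea.
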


\begin{proof}[\textsc{Proof}]
A straightforward calculation shows that $$\alpha_p^{\otimes
n^{(p)}}\cong(\underset{G_i\subset \BG_a}{\invlim}
G_i^*)^{(p)}\cong\underset{G_i\subset \BG_a}{\invlim}
(G_i^{*^{(p)}})$$and that the Verschiebung $V:(\underset{G_i\subset
\BG_a}{\invlim} G_i^*)^{(p)}\to\underset{G_i\subset \BG_a}{\invlim}
G_i^*$ is the inverse limit of the Verschiebungen
$V_i:G_i^{*^{(p)}}\to G_i^*$. According to Lemma \ref{quotinvlim},
the cokernel of the inverse limit of the $V_i$ is the inverse limit
of the cokernel of the $V_i$, i.e., $\cokernel
\invlim_iV_i=\invlim_i\cokernel V_i$.

Now we have $G_i^{*^{(p)}}\cong G_i^{(p)^*}$ and
$V_i:G_i^*{^{(p)}}\to G_i^*$ is the dual of the Frobenius
$F_i:G_i\to G_i^{(p)}$. By duality, we have $\cokernel F_i^*\cong
(\kernel F_i)^*$. Putting these facts together, we obtain $\cokernel
V\cong \invlim (\kernel F_i)^*$. From Lemma \ref{lem10} we deduce
that the finite subgroup schemes of $\BG_a$ that contain $\alpha_p$
form a cofinal system and we can thus suppose that every $G_i$
contains $\alpha_p$. It follows that the kernel of the Frobenius
$F_i:G_i\to G_i^{(p)}$ is equal to $\alpha_p$. Hence
$$\cokernel V\cong \invlim (\kernel F_i)^*\cong\invlim
\alpha_p^*\cong \alpha_p^*\cong\alpha_p.$$
\end{proof}

\begin{lem}
\label{lem12}

Let $G_1,\dots,G_n$ and $H$ be finite commutative group schemes and
$\phi:G_1\times\dots\times G_n\to H$ a multilinear morphism. Then we
have a commutative diagram
$$\xymatrix{
G_1^{(p)}\times G_2^{(p)}\times\dots\times
G_n^{(p)}\ar[rr]^{\qquad\quad\phi^{(p)}}&&H^{(p)}\ar[dd]^{V_H}\\
G_1^{(p)}\times G_2\times\dots\times
G_n\ar[d]_{\widetilde{V}}\ar[u]^{\widetilde{F}}&&\\
G_1\times G_2\times\dots\times G_n\ar[rr]_{\qquad\phi}&&H}$$ where
$\widetilde{F}=\Id_{G_1^{(p)}}\times F_{G_2}\times\dots\times
F_{G_n}$ and $F_{G_i}:G_i\to G_i^{(p)}$ is the Frobenius of $G_i$
and $\widetilde{V}=V_{G_1}\times\Id_{G_2}\times
\dots\times\Id_{G_n}$, and $V_{G_1}$ and $V_H$ are the
Verschiebungen of $G_1$ and $H$.
\end{lem}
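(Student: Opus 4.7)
The plan is to induct on $n$, using the curry adjunction of Proposition~\ref{prop2} to reduce the multilinear statement to the classical naturality of the Verschiebung for a single group-scheme homomorphism. For $n=1$ the morphism $\widetilde{F}$ is the identity on $G_1^{(p)}$ and $\widetilde{V}=V_{G_1}$, so the diagram degenerates to $V_H\circ\phi^{(p)}=\phi\circ V_{G_1}$, which is exactly the naturality of $V$ applied to the homomorphism $\phi\colon G_1\to H$.

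For $n\geq 2$ I regard $\phi$ as a group-scheme homomorphism $\phi'\colon G_1\to\innMult(G_2\times\dotsb\times G_n,H)$ via the isomorphism in Proposition~\ref{prop2}, functorially characterised by $\phi'(g_1)(g_2,\dotsc,g_n)=\phi(g_1,\dotsc,g_n)$. Naturality of the Verschiebung applied to the homomorphism $\phi'$ gives
$$V_{\innMult(G_2\times\dotsb\times G_n,H)}\circ(\phi')^{(p)}=\phi'\circ V_{G_1}.$$
To unwind this identity into the diagram of the lemma, I need the key formula that, under the canonical isomorphism $\innMult(G_2\times\dotsb\times G_n,H)^{(p)}\cong\innMult(G_2^{(p)}\times\dotsb\times G_n^{(p)},H^{(p)})$, the Verschiebung acts on a multilinear morphism $\chi\colon G_2^{(p)}\times\dotsb\times G_n^{(p)}\to H^{(p)}$ by
$$\chi\longmapsto V_H\circ\chi\circ(F_{G_2}\times\dotsb\times F_{G_n}).$$
Granting this, evaluating the previous identity on a $T$-valued point $(y_1,g_2,\dotsc,g_n)$ of $G_1^{(p)}\times G_2\times\dotsb\times G_n$ produces exactly $V_H(\phi^{(p)}(y_1,F_{G_2}(g_2),\dotsc,F_{G_n}(g_n)))=\phi(V_{G_1}(y_1),g_2,\dotsc,g_n)$, which is the claimed commutativity. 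The key formula itself reduces, by iterated application of Lemma~\ref{lem2}, to the one-variable case $V_{\innHom(G,H)}(\chi)=V_H\circ\chi\circ F_G$.

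The principal obstacle is establishing this last formula for $V$ on $\innHom(G,H)$. The easy half is to observe that the morphism $W\colon\chi\mapsto V_H\circ\chi\circ F_G$ and $V_{\innHom(G,H)}$ agree after composition with $F_{\innHom(G,H)}$: both yield $[p]_{\innHom(G,H)}$, since $F_{\innHom(G,H)}$ sends $\psi$ to $\psi^{(p)}$ and $V_H\circ\psi^{(p)}\circ F_G=V_H\circ F_H\circ\psi=[p]_H\circ\psi$. However $F_{\innHom(G,H)}$ is not in general an epimorphism (it is zero, for instance, when $G=H=\alpha_p$), so this comparison alone does not force $W=V_{\innHom(G,H)}$. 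To finish I would pass to the Cartier duals of the finite constituents, using the identity $V_G=F_{G^*}^{*}$ and the compatibility of Cartier duality with $\innHom$, turning the required equality into a statement about Frobenius, which is uniquely pinned down by its universal property; alternatively a direct Hopf-algebra verification using the explicit Witt-vector description of $V$ would work. This reduction to the Frobenius side is the main technical point.
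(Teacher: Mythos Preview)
Your strategy---curry via Proposition~\ref{prop2} and invoke naturality of $V$ for the resulting homomorphism $\phi'$---is sound, and the identity you isolate, $V_{\innHom(G,H)}(\chi)=V_H\circ\chi\circ F_G$, is indeed the heart of the matter. However, the step you flag as ``the main technical point'' is not a detail you can defer: it carries essentially the entire content of the lemma, and your proposal does not prove it. Your observation that the candidate $W$ and $V_{\innHom(G,H)}$ agree after composing with $F_{\innHom(G,H)}$ is correct but, as you yourself note, useless when $F$ is not an epimorphism. Your suggestion to ``pass to Cartier duals'' is the right instinct, but beware that $\innHom(G,H)$ need not be finite (e.g.\ $\innHom(\alpha_p,\alpha_p)\cong\BG_a$), so you cannot dualize it directly; you must dualize $G_1$ and $H$ individually.

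The paper does exactly this, but organizes the argument so that no separate computation of $V_{\innHom}$ is needed. Rather than currying into $\innHom$ and then struggling with its Verschiebung, it uses the duality isomorphism $\Mult(G_1\times\dotsb\times G_n,H)\cong\Mult(G_2\times\dotsb\times G_n\times H^*,G_1^*)$, moving $G_1$ to the target and $H$ to the source. Under this isomorphism, precomposition with $V_{G_1}$ becomes postcomposition with $F_{G_1^*}$ and postcomposition with $V_H$ becomes precomposition with $F_{H^*}$, so both sides of the desired equality become statements purely about Frobenius. The identity then drops out of the naturality of relative Frobenius for \emph{arbitrary scheme morphisms} (not just homomorphisms), applied to the multilinear map $\theta_1(\phi)\colon G_2\times\dotsb\times G_n\times H^*\to G_1^*$. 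This is precisely what your duality argument for $V_{\innHom(G,H)}$ would eventually reconstruct, but the paper's route reaches it in one step without the detour through inner $\Hom$.
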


\begin{proof}[\textsc{Proof}]
Consider the following diagram
$$\xymatrix{
\Mult(G_1\times G_2\times\dots\times
G_n,H)\ar[r]_{\cong}^{\theta_1}\ar[d]_{(-)\circ
\widetilde{V}}&\Mult(G_2\times \dots\times G_n\times
H^*,G_1^*)\ar[d]^{F_{G_1^*}\circ(-)}\\
\Mult(G_1^{(p)}\times G_2\times\dots\times
G_n,H)\ar[r]_{\cong}^{\theta_2}&\Mult(G_2\times \dots\times
G_n\times
H^*,G_1^{(p)^*})\\
\Mult(G_1^{(p)}\times G_2\times\dots\times
G_n,H^{(p)})\ar[r]_{\cong}^{\theta_3}\ar[u]^{V_H\circ(-)}&\Mult(G_2\times
\dots\times G_n\times
H^{(p)^*},G_1^{(p)^*})\ar[u]_{F_H^*}\\
\Mult(G_1^{(p)}\times\dots\times
G_n^{(p)},H^{(p)})\ar[r]_{\cong\quad}^{\theta_4\quad}\ar[u]^{(-)\circ\widetilde{F}}&\Mult(G_2^{(p)}\times
\dots\times G_n^{(p)}\times H^{(p)^*},G_1^{(p)^*})\ar[u]_{F^*}}$$
where the horizontal homomorphisms are the isomorphisms given by
Lemma \ref{lem1} (note that $(-)^*=\innHom(-,\BG_m)$) and $F_H^*$
and $F^*$ are respectively the homomorphisms
$(-)\circ(\Id_{G_2}\times\dots\times\Id_{G_n}\times F_{H^*})$ and
$(-)\circ (F_{G_1}\times\dots\times F_{G_n}\times \Id_{H^{(p)^*}})$.
Using the facts that the isomorphism in Lemma \ref{lem1} is
functorial and under the identification $(-)^{(p)^*}\cong
(-)^{*^{(p)}}$, the dual of the Verschiebung of a commutative group
scheme is the Frobenius of the dual group scheme, we deduce that
this diagram is commutative.

The commutativity of the upper square implies that
$$F_{G_1^*}\circ\theta_1(\phi)=\theta_2(\phi\circ \widetilde{V})\qquad (\bigstar)$$ The commutativity of the two
bottom squares implies that
$$\theta_4(\phi^{(p)})\circ
(F_{G_2}\times\dots\times F_{G_n}\times \Id_{H^{(p)^*}})\circ
(\Id_{G_2}\times\dots\times\Id_{G_n}\times
F_{H^*})=\theta_2(V_H\circ \phi^{(p)}\circ \widetilde{F}).$$ The
composition $(F_{G_2}\times\dots\times F_{G_n}\times
\Id_{H^{(p)^*}})\circ (\Id_{G_2}\times\dots\times\Id_{G_n}\times
F_{H^*})$ equals $(F_{G_2}\times\dots\times F_{G_n}\times F_{H^*})$
and one can easily check that the isomorphism $\theta_1$ given in
Lemma \ref{lem1} is compatible with the pullback of the Frobenius,
i.e., $\theta_4(\phi^{(p)})=\theta_1(\phi)^{(p)}$. We have thus
$$\theta_1(\phi)^{(p)}\circ(F_{G_2}\times\dots\times F_{G_n}\times
F_{H^*})=\theta_2(V_H\circ \phi^{(p)}\circ \widetilde{F})\qquad
(\blacktriangle)$$ Writing $\overrightarrow{F}$ for
$(F_{G_2}\times\dots\times F_{G_n}\times F_{H^*})$, we know that
there is a commutative diagram
$$\xymatrix{
G_2\times\dots\times G_n\times H^*\ar[rr]^{\qquad\theta_1(\phi)}\ar[d]_{\overrightarrow{F}}&&G_1^*\ar[d]^{F_{G_1^*}}\\
G_2^{(p)}\times\dots\times G_n^{(p)}\times
H^{*^{(p)}}\ar[rr]_{\qquad\theta_1(\phi)^{(p)}}&&G_1^{*^{(p)}}.}$$
This together with $(\bigstar)$ and $(\blacktriangle)$ imply that
$\theta_2(\phi\circ\widetilde{V})=\theta_2(V_H\circ \phi^{(p)}\circ
\widetilde{F}).$ But $\theta_2$ is injective and therefore
$\phi\circ\widetilde{V}=V_H\circ \phi^{(p)}\circ \widetilde{F}$.

\end{proof}

\begin{rem}
\label{rem14} This lemma is true more generally, i.e., with
$G_i$ and $H$ arbitrary commutative group schemes and not
necessarily finite. But the proof is more complicated and in the
sequel, we will only need the weaker version.\xqed{\lozenge}\end{rem}

Let $G$ be a commutative group scheme over a field $k$ of
characteristic $p$ and $\kappa:G^n\to \Lambda^nG$ the universal
alternating morphism defining $\Lambda^nG$. Then taking the pullback
of $\kappa$ and using the isomorphism $(G^n)^{(p)}\cong(G^{(p)})^n$,
we obtain an alternating morphism
$\kappa^{(p)}:G^{(p)}\to(\Lambda^nG)^{(p)}$. Therefore, there is a
unique homomorphism $\eta:\Lambda^n(G^{(p)})\to (\Lambda^nG)^{(p)}$
such that $\eta\circ \kappa'=\kappa^{(p)}$, where
$\kappa':(G^{(p)})^n\to \Lambda^n(G^{(p)})$ is the universal
alternating morphism of $\Lambda^n(G^{(p)})$.

\begin{lem}
\label{lem14} Let the base field $k$ be perfect of odd
characteristic $p$ and $G$ a commutative group scheme over $k$. Then
the homomorphism
$$\eta:\Lambda^n(G^{(p)})\to(\Lambda^nG)^{(p)}$$
is a natural isomorphism and therefore $(\Lambda^nG)^{(p)}$ together
with the alternating morphism
$\kappa^{(p)}:(G^{(p)})^n\to(\Lambda^nG)^{(p)}$ is an alternating
$n^{\text{th}}$ power of $G^{(p)}$.
\end{lem}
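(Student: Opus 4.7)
The strategy rests on the observation that since $k$ is perfect, the absolute Frobenius $\phi_k\colon \Spec k\to \Spec k$ is an isomorphism, and consequently the Frobenius twist $(-)^{(p)} = \phi_k^*(-)$ is an autoequivalence of the category of commutative group schemes over $k$. Because any equivalence of categories preserves every construction characterized by a universal property, one expects $(\Lambda^n G)^{(p)}$, together with the pulled-back alternating morphism $\kappa^{(p)}$, to satisfy the universal property defining $\Lambda^n(G^{(p)})$; the map $\eta$ should then be the canonical isomorphism arising from uniqueness.

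To implement this plan I would verify the universal property directly. Given any commutative group scheme $H$ over $k$, I use that $(-)^{(p)}$ is essentially surjective (since $\phi_k$ is an isomorphism, $k$ being perfect) to write $H\cong H_0^{(p)}$ for some $H_0$. Because $(-)^{(p)}$ is a pullback along an isomorphism, it preserves products and is fully faithful on both homomorphisms and multilinear morphisms; moreover it sends alternating morphisms to alternating morphisms and reflects the same. Assembling these gives a chain
\begin{align*}
\Hom((\Lambda^n G)^{(p)},H) &\cong \Hom(\Lambda^n G,H_0)\\
&\cong \Alt(G^n,H_0)\\
&\cong \Alt((G^{(p)})^n,H_0^{(p)}) = \Alt((G^{(p)})^n,H),
\end{align*}
natural in $H$, where the first and third isomorphisms are fully-faithfulness of $(-)^{(p)}$ and the middle one is the defining universal property of $\Lambda^n G$. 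A Yoneda/diagram chase then shows that the composite bijection is precisely precomposition with $\kappa^{(p)}$, so $((\Lambda^n G)^{(p)},\kappa^{(p)})$ is an alternating $n$-th power of $G^{(p)}$.

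By the uniqueness clause of Remark \ref{rem9} there is a unique isomorphism between $\Lambda^n(G^{(p)})$ and $(\Lambda^n G)^{(p)}$ compatible with the two universal alternating morphisms $\kappa'$ and $\kappa^{(p)}$. Since $\eta$ was defined as the unique homomorphism satisfying $\eta\circ\kappa'=\kappa^{(p)}$, it must coincide with this canonical isomorphism, and is therefore itself an isomorphism. Naturality of $\eta$ in $G$ follows automatically from the naturality of each ingredient in the chain above and the functoriality of $(-)^{(p)}$.

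The main (indeed only) technical point requiring care is the identification: one must verify that the image of $\Id_{(\Lambda^n G)^{(p)}}$ under the displayed chain of natural isomorphisms really is $\kappa^{(p)}\in\Alt((G^{(p)})^n,(\Lambda^n G)^{(p)})$. This is a routine but somewhat fiddly diagram-chase through the adjunctions implicit in the statement that $(-)^{(p)}$ is fully faithful, using that $\kappa^{(p)}$ is, by its very construction, the pullback of the tautological element $\kappa$ corresponding to $\Id_{\Lambda^n G}$. The oddness of $p$ plays no role here; it is only needed for later applications of the lemma.
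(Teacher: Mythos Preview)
Your proposal is correct and follows essentially the same approach as the paper: both exploit that $(-)^{(p)}$ is an autoequivalence when $k$ is perfect, write an arbitrary target as a Frobenius twist, and assemble the same chain of bijections to show that $(-)\circ\kappa^{(p)}$ is an isomorphism, whence $\eta$ is an isomorphism by the universal property. Your remark that the oddness of $p$ is not actually used in this lemma is also accurate.
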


\begin{proof}[\textsc{Proof}]
Note that since the field $k$ is perfect, the functor $(-)^{(p)}$
from the category of affine commutative group schemes over $k$ to
itself is an equivalence of categories. Using the above notation, we
have thus a commutative diagram
$$\xymatrix{
\quad\Hom(\Lambda^nG,H)\quad\ar[rr]^{(-)\circ\kappa}_{\cong}\ar[d]_{(-)^{(p)}}^{\cong}&&\quad\Alt(G^n,H)\ar[d]_{(-)^{(p)}}^{\cong}\quad\\\
\Hom((\Lambda^nG)^{(p)},H^{(p)})\ar[rrd]_{(-)\circ\eta}\ar[rr]^{(-)\circ\kappa^{(p)}}&&\Alt((G^{(p)})^n,H^{(p)})\\
&&\Hom(\Lambda^n(G^{(p)}),H^{(p)})\ar[u]_{\cong}^{(-)\circ\kappa'}.}$$
The above square is commutative because of the functoriality of
$(-)^{(p)}$. It implies that the homomorphism
$$(-)\circ\kappa^{(p)}:\Hom((\Lambda^nG)^{(p)},H^{(p)})\to
\Alt((G^{(p)})^n,H^{(p)})$$ is an isomorphism and so the
homomorphism $(-)\circ\eta$ is also an isomorphism. Since the
functor $(-)^{(p)}$ is an equivalence of categories, we can write
any commutative group scheme as $H^{(p)}$ for some commutative group
scheme $H$. Consequently $\eta$ is an isomorphism.
\end{proof}

\begin{lem}
\label{lem13} Let the base field $k$ be a perfect field of odd
characteristic $p$ and $n$ a positive integer. Then the Verschiebung
$$V:(\Lambda^n\alpha_{p^n})^{(p)}\to\Lambda^n\alpha_{p^n}$$ is
trivial.
\end{lem}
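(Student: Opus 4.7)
The idea is to reduce the problem to the vanishing $V_{\alpha_{p^n}}=0$, via the universal alternating morphism together with the injectivity result of Theorem~\ref{lem7}(a).

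First, I would establish $V_{\alpha_{p^n}}=0$. On $\BG_a$ we have $V_{\BG_a}\circ F_{\BG_a}=[p]=0$ in characteristic $p$, and $F_{\BG_a}$ is faithfully flat (hence an fppf-epimorphism), so $V_{\BG_a}=0$. Naturality of Verschiebung applied to the inclusion $\iota:\alpha_{p^n}\hookrightarrow\BG_a$ yields $\iota\circ V_{\alpha_{p^n}}=V_{\BG_a}\circ\iota^{(p)}=0$, and since $\iota$ is a monomorphism we conclude $V_{\alpha_{p^n}}=0$. Let $\kappa:\alpha_{p^n}^n\to\Lambda^n\alpha_{p^n}$ be the universal alternating morphism, and write $\widetilde{V}=V_{\alpha_{p^n}}\times\Id\times\dots\times\Id$ and $\widetilde{F}=\Id\times F_{\alpha_{p^n}}\times\dots\times F_{\alpha_{p^n}}$. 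Lemma~\ref{lem12} (in the general form of Remark~\ref{rem14}) applied to $\phi=\kappa$ yields
$$\kappa\circ\widetilde{V}=V_{\Lambda^n\alpha_{p^n}}\circ\kappa^{(p)}\circ\widetilde{F}.$$
The left-hand side vanishes by the previous step. Since $F_{\alpha_{p^n}}$ is fppf-epi onto $\alpha_{p^{n-1}}^{(p)}$, the image of $\widetilde{F}$ is $\alpha_{p^n}^{(p)}\times(\alpha_{p^{n-1}}^{(p)})^{n-1}$, so $V_{\Lambda^n\alpha_{p^n}}\circ\kappa^{(p)}$ vanishes on this subgroup.

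Next I apply Theorem~\ref{lem7}(a) to the short exact sequence $0\to\alpha_{p^{n-1}}^{(p)}\to\alpha_{p^n}^{(p)}\to\alpha_p^{(p)}\to 0$ with $m=n$, $m'=n-1$, $m''=1$. The hypothesis $\Lambda^{m''+1}\alpha_p^{(p)}=\Lambda^2\alpha_p=0$ holds since $p>2$ by Example~\ref{ex7}. The alternating property of $V_{\Lambda^n\alpha_{p^n}}\circ\kappa^{(p)}$ allows one to interchange the unrestricted slot with the last slot, so its vanishing on $\alpha_{p^n}^{(p)}\times(\alpha_{p^{n-1}}^{(p)})^{n-1}$ is equivalent to its vanishing on $(\alpha_{p^{n-1}}^{(p)})^{n-1}\times\alpha_{p^n}^{(p)}$. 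The injectivity of the restriction map supplied by Theorem~\ref{lem7}(a) then forces $V_{\Lambda^n\alpha_{p^n}}\circ\kappa^{(p)}=0$.

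Finally, Lemma~\ref{lem14} exhibits $(\Lambda^n\alpha_{p^n})^{(p)}$ together with $\kappa^{(p)}$ as an $n$th alternating power of $\alpha_{p^n}^{(p)}$; its universal property identifies $\Hom((\Lambda^n\alpha_{p^n})^{(p)},\Lambda^n\alpha_{p^n})$ with $\Alt((\alpha_{p^n}^{(p)})^n,\Lambda^n\alpha_{p^n})$ by composition with $\kappa^{(p)}$. Under this identification $V_{\Lambda^n\alpha_{p^n}}$ corresponds to $V_{\Lambda^n\alpha_{p^n}}\circ\kappa^{(p)}$, which is zero, whence $V_{\Lambda^n\alpha_{p^n}}=0$. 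The critical input is $V_{\alpha_{p^n}}=0$, which rests on naturality of the Verschiebung (used implicitly in the paper, e.g.\ in the proof of Lemma~\ref{lem11}) together with the standard vanishing $V_{\BG_a}=0$; once this is granted, the rest is a clean combination of Lemma~\ref{lem12}, Theorem~\ref{lem7}(a) and Lemma~\ref{lem14}, the only point requiring care being to permute factors so as to match the precise slot arrangement of Theorem~\ref{lem7}(a).
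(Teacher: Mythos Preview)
Your argument is correct and follows the same core strategy as the paper: use $V_{\alpha_{p^n}}=0$ together with Lemma~\ref{lem12} to see that $V\circ\kappa^{(p)}$ vanishes on the image of $\widetilde{F}$, then identify that image with $(\alpha_{p^{n-1}}^{(p)})^{n-1}\times\alpha_{p^n}^{(p)}$ (up to a permutation of factors) and invoke Theorem~\ref{lem7}(a) with the exact sequence $0\to\alpha_{p^{n-1}}^{(p)}\to\alpha_{p^n}^{(p)}\to\alpha_p^{(p)}\to 0$ and $\Lambda^2\alpha_p=0$ to conclude $V\circ\kappa^{(p)}=0$; finally Lemma~\ref{lem14} gives $V=0$.

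The one substantive difference is that you apply Lemma~\ref{lem12} directly to $\phi=\kappa$ with target $H=\Lambda^n\alpha_{p^n}$, which is only pro-finite, and for this you invoke the general form asserted in Remark~\ref{rem14}. That remark is stated but not proved in the paper, and the paper explicitly says it will only use the finite version. Accordingly, the paper instead proves the stronger statement that $V_H\circ\phi^{(p)}=0$ for \emph{every} $H$ and every $\phi\in\Alt(\alpha_{p^n}^n,H)$, establishing it first for finite $H$ (where Lemma~\ref{lem12} applies as stated), then reducing $H$ of finite type to the finite case via Proposition~2.3 of \cite{Pink1}, and finally passing to arbitrary $H$ by writing it as an inverse limit of finite-type group schemes. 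Your route is shorter and perfectly sound once Remark~\ref{rem14} is granted; the paper's three-step reduction buys independence from that unproved remark.
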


\begin{proof}[\textsc{Proof}]
If we show that every element $\phi$ of $\Alt(\alpha_{p^n}^n,H)$ is
annihilated by the Verschiebung $V_H$ of $H$, i.e., the composite
$$(\alpha_{p^n}^n)^{(p)}\arrover{\phi^{(p)}}H^{(p)}\arrover{V_H}H$$is
zero, then for every element $\psi$ of
$\Hom(\Lambda^n\alpha_{p^n},H)$ we will have $\psi\circ V=0$ and
hence $V=0$ (by putting $H=\Lambda^n\alpha_{p^n}$ and $\psi$ the
identity homomorphism). Indeed, let $\psi:\Lambda^n\alpha_{p^n}\to
H$ be a homomorphism and put $\psi':=\psi\circ\kappa$. Consider the
following commutative diagram
$$\xymatrix{
(\alpha_{p^n}^{(p)})^n\ar[r]^{\kappa^{(p)}}\ar[dr]_{\psi'^{(p)}}&(\Lambda^n\alpha_{p^n})^{(p)}\ar[r]^V\ar[d]^{\psi^{(p)}}
&\Lambda^n\alpha_{p^n}\ar[d]^{\psi}\\
&H^{(p)}\ar[r]_{V_H}&H.}$$ By hypothesis, $V_H\circ\psi'^{(p)}=0$
and therefore $\psi\circ V\circ\kappa^{(p)}=0$. But according to
Lemma \ref{lem14}
$$(-)\circ\kappa^{(p)}:\Hom((\Lambda^nG)^{(p)},H)\to
\Alt((G^{(p)})^n,H)$$ is an isomorphism, which implies that
$\psi\circ V=0$.\\

So we should show that for every $H$, every element $\phi$ of
$\Alt(\alpha_{p^n}^n,H)$ is annihilated by the Verschiebung $V_H$.
We show this in 3 steps.

\begin{itemize}
\item[Step 1)] We show the statement for $H$ finite. According to Lemma \ref{lem12}, we have the following
commutative diagram
$$\xymatrix{
\alpha_{p^n}^{(p)}\times \alpha_{p^n}^{(p)}\times\dots\times
\alpha_{p^n}^{(p)}\ar[rr]^{\qquad\quad\phi^{(p)}}&&H^{(p)}\ar[dd]^{V_H}\\
\alpha_{p^n}^{(p)}\times \alpha_{p^n}\times\dots\times
\alpha_{p^n}\ar[d]_{\widetilde{V}}\ar[u]^{\widetilde{F}}&&\\
\alpha_{p^n}\times \alpha_{p^n}\times\dots\times
\alpha_{p^n}\ar[rr]_{\qquad\phi}&&H}$$ where
$\widetilde{V}=V_{\alpha_{p^n}}\times
\Id_{\alpha_{p^n}}\times\dots\times\Id_{\alpha_{p^n}}$ and
$\widetilde{F}=\Id_{\alpha_{p^n}^{(p)}}\times F_{\alpha_{p^n}}\times
\dots\times F_{\alpha_{p^n}}$. But the Verschiebung is trivial on
$\alpha_{p^n}$, so the composite $\phi\circ \widetilde{V}$ is
trivial, because $\phi$ is multilinear, and hence
$V_H\circ\phi^{(p)}\circ \widetilde{F}=0$. We want to show that this
implies $V_H\circ\phi^{(p)}$ is zero.

We know that we can write the Frobenius $F:\alpha_{p^n}\to
\alpha_{p^n}^{(p)}$ as the composite
$$\alpha_{p^n}\ovset{q}{\longonto} \alpha_{p^{n-1}}\ovset{i}{\into} \alpha_{p^n}\unset{\cong}{\arrover{\theta}} \alpha_{p^n}^{(p)}$$
where the epimorphism and the monomorphism are the natural ones and
the isomorphism $\theta:\alpha_{p^n}\cong \alpha_{p^n}^{(p)}$ is
given by the Hopf algebra isomorphism
$$k[X]/(X^{p^n})\otimes_{k,\sigma}k\to k[X]/(X^{p^n}),\quad
\overline{x}\otimes a\mapsto a\cdot\overline{x},\ \ \forall a\in
k.$$ We can thus write $\widetilde{F}$ as the composition
$$\alpha_{p^n}^{(p)}\times \alpha_{p^n}\times\dots\times \alpha_{p^n}\ovset{\widetilde{q}}{\to}
\alpha_{p^n}\times \alpha_{p^{n-1}}\times\dots\times\alpha_{p^{n-1}}
\ovset{\widetilde{i}}{\to}\alpha_{p^n}^n\ovset{\theta^n}{\to}(\alpha_{p^n}^{(p)})^n
$$
where $\widetilde{q}$ is $\theta^{-1}\times q\times \dots\times q$
and $\widetilde{i}$ is the restriction map $\Id_{\alpha_{p^n}}\times
i\times\dots\times i$. Since $\widetilde{q}$ is epimorphic and the
composition $V_H\circ\phi^{(p)}\circ \widetilde{F}$ is zero, we have
that $V_H\circ\phi^{(p)}\circ  \theta^n\circ \widetilde{i}=0$. Since
$\phi$ is alternating, $\phi^{(p)}$ is alternating too. Therefore
the morphism $V_H\circ\phi^{(p)}\circ  \theta^n$ is alternating. It
has a trivial restriction to $\alpha_{p^n}\times
\alpha_{p^{n-1}}\times\dots\times\alpha_{p^{n-1}}$ and we have a
short exact sequence $0\to\alpha_{p^{n-1}}\to \alpha_{p^n}\to
\alpha_p\to0$. We can thus apply Theorem \ref{lem7} (a) and conclude
that the morphism $V_H\circ\phi^{(p)}\circ \theta^n$ is zero as
well. Since $\theta^n$ is an isomorphism, the morphism
$V_H\circ\phi^{(p)}$ is zero.

\item[Step 2)] We show the statement with $H$ of finite type.
According to Proposition 2.3 in \cite{Pink1}, the morphism $\phi$
factors through a finite subgroup scheme $H'$ of $H$, i.e., the
following diagram is commutative
$$\xymatrix{
\alpha_{p^n}^n\ar[rr]^{\phi}\ar[dr]^{\phi'}&&H\\
&H'.\ar@{^{ (}->}[ur]&}$$ We have thus a commutative diagram
$$\xymatrix{
(\alpha_{p^n}^{(p)})^n\ar[r]^{\phi^{(p)}}\ar[dr]_{\phi'^{(p)}}&H^{(p)}\ar[r]^{V_H}&H\\
&H'^{(p)}\ar@{^{ (}->}[u]\ar[r]_{V_{H'}}&H'.\ar@{^{ (}->}[u]}$$ By
step 1, we have $V_{H'}\circ\phi'^{(p)}=0$. Hence
$V_H\circ\phi^{(p)}=0$.

\item[Step 3)] Now we show the statement for general $H$. We know
that we can write $H=\invlim H_i$ with commutative schemes $H_i$ of
finite type. Let $\lambda_i:H\to H_i$ be the canonical homomorphisms
of the inverse limit and put $\phi_i:=\lambda_i\circ\phi$. For ever
$i$ we have a commutative diagram
$$\xymatrix{
(\alpha_{p^n}^{(p)})^n\ar[r]^{\phi^{(p)}}\ar[dr]_{\phi_i^{(p)}}&H^{(p)}\ar[r]^{V_H}\ar[d]^{\lambda_i^{(p)}}&H\ar[d]^{\lambda_i}\\
&H_i\ar[r]_{V_{H_i}}&H_i.}$$ By Step 2, the composition
$V_{H_i}\circ\phi_i^{(p)}$ is trivial and thus we have for all $i$
that the composition $\lambda_i\circ V_H\circ \phi^{(p)}=0$. Since
$H=\invlim H_i$, we conclude that $V_H\circ\phi^{(p)}=0$.
\end{itemize}
\end{proof}

\begin{prop}
\label{prop12} Let the base scheme be $\Spec k$ for a perfect field
of odd characteristic $p$ and $n$ a positive integer. Then there is
an isomorphism
$$\Lambda^n\alpha_{p^n}\cong\alpha_p.$$
\end{prop}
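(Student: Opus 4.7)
The plan is to combine the earlier structure results to produce an epimorphism $\alpha_p\onto\Lambda^n\alpha_{p^n}$ and then invoke the simplicity of $\alpha_p$ to upgrade it to an isomorphism. The case $n=1$ is immediate since $\Lambda^1 G=G$, so I focus on $n\geq 2$.

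First, since $\alpha_{p^n}$ is a local-local commutative group scheme of order $p^n$, Proposition \ref{prop7}(b) provides an epimorphism $\xi:\alpha_p^{\otimes n}\onto\Lambda^n\alpha_{p^n}$. By Example \ref{ex7}, for $n\geq 2$ there is a canonical identification $\alpha_p^{\otimes n}\cong\BG_a^*$, so $\xi$ can be viewed as an epimorphism $\BG_a^*\onto\Lambda^n\alpha_{p^n}$.

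Next, Lemma \ref{lem13} asserts that the Verschiebung $V_{\Lambda^n\alpha_{p^n}}$ is trivial. Since every homomorphism of commutative group schemes commutes with Verschiebung, I obtain
$$\xi\circ V_{\BG_a^*}=V_{\Lambda^n\alpha_{p^n}}\circ\xi^{(p)}=0.$$
Hence $\xi$ factors through the cokernel of $V:\BG_a^{*^{(p)}}\to\BG_a^*$, which by Lemma \ref{lem11} is $\alpha_p$. This gives an induced epimorphism $\bar\xi:\alpha_p\onto\Lambda^n\alpha_{p^n}$.

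Finally, I would observe that $\alpha_p$ is simple: a direct inspection of the sub-Hopf-algebras of $k[y]/(y^p)$ shows that its only sub-group-schemes are $0$ and $\alpha_p$ itself (for $0<d<p$ the ideal $(y^d)$ is not a Hopf ideal, since the binomial coefficient $\binom{d}{i}$ is not divisible by $p$ for some $0<i<d$). To rule out the possibility $\Lambda^n\alpha_{p^n}=0$, I would use Proposition \ref{ex5} with $r=n$, which yields $\innAlt(\alpha_{p^n}^n,\BG_a)\cong\BG_a^{\binom{n}{n}}=\BG_a\neq 0$; in particular $\Hom(\Lambda^n\alpha_{p^n},\BG_a)\neq 0$, forcing $\Lambda^n\alpha_{p^n}\neq 0$. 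By simplicity of $\alpha_p$, the epimorphism $\bar\xi$ must therefore be an isomorphism.

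The argument is essentially an assembly of the previous deep results (Proposition \ref{prop7}, Lemma \ref{lem11}, Lemma \ref{lem13}, Proposition \ref{ex5}), and the only conceptually delicate point is the step in which the Verschiebung on $\alpha_p^{\otimes n}\cong\BG_a^*$ is identified with the Verschiebung appearing in Lemma \ref{lem11}; since the Verschiebung is a natural transformation canonically defined on every commutative group scheme, this compatibility is automatic, so no genuine obstacle arises.
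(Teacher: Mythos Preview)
Your proof is correct and follows essentially the same route as the paper's: obtain an epimorphism $\alpha_p^{\otimes n}\onto\Lambda^n\alpha_{p^n}$ from Proposition~\ref{prop7}, use Lemma~\ref{lem13} together with the naturality of Verschiebung to factor it through $\cokernel(V_{\BG_a^*})\cong\alpha_p$ (Lemma~\ref{lem11}), and then invoke Proposition~\ref{ex5} plus the simplicity of $\alpha_p$ to conclude. The paper packages the factoring-through-cokernels step as a commutative diagram rather than as the equation $\xi\circ V=V'\circ\xi^{(p)}$, but this is purely cosmetic.
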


\begin{proof}[\textsc{Proof}]
If $n=1$ then this is a tautology, so assume $n>1$. We know by
Proposition \ref{ex5} that $\innAlt(\alpha_{p^n}^n,\BG_a)\cong\BG_a$ and
therefore, $\Hom(\Lambda^n\alpha_{p^n},\BG_a)\cong\BG_a(k)=k$. This
shows that the group scheme $\Lambda^n\alpha_{p^n}$ is not trivial.
Assume that we have an epimorphism
$\pi:\alpha_p\onto\Lambda^n\alpha_{p^n}$. This implies that $\pi$ is
in fact an isomorphism, because its kernel could not be the whole
group scheme $\alpha_p$ (since otherwise the image,
$\Lambda^n\alpha_{p^n}$, would be trivial) and since $\alpha_p$ is
simple, the kernel should be zero. Consequently, $\pi$ is a
monomorphism too and hence an isomorphism. It is thus sufficient to
show that such an epimorphism exists.

We know from Proposition \ref{prop7} that there is an epimorphism
$\theta:\alpha_p^{\otimes n}\onto\Lambda^n\alpha_{p^n}$. Consider
the following commutative diagram
$$\xymatrix{
\alpha_p^{\otimes
n^{(p)}}\ar@{->>}[r]^{\theta^{(p)}}\ar[d]_{V}&\Lambda^n\alpha_{p^n}^{(p)}\ar[d]^{V'}\\
\alpha_p^{\otimes
n}\ar@{->>}[r]^{\theta}\ar@{->>}[d]&\Lambda^n\alpha_{p^n}\ar[d]\\
C\ar@{->>}[r]^{\overline{\theta}}&C'}$$ where $V$ and $V'$ are
Verschiebung and $C$ and $C'$ are the cokernels of $V$ and $V'$. By
Lemma \ref{lem11}, $C$ is isomorphic to $\alpha_p$. We know from
Lemma \ref{lem13} that the image of $V'$ is zero and hence its
cokernel $C'$ is isomorphic to $\Lambda^n\alpha_{p^n}$. Since
$C\arrover{\overline{\theta}}C'$ is epimorphic, we get the desired
epimorphism $\alpha_p\onto \Lambda^n\alpha_{p^n}$.
\end{proof}

\phantomsection
\addcontentsline{toc}{section}{References}


\begin{thebibliography}{100}
\bibitem{D} M. Demazure, \emph{Lectures on $p$-divisable groups},
Springer-Verlag, 1972

\bibitem{DG} M. Demazure and P. Gabriel, \emph{Groupes Alg\`{e}briques, Tome I}, North-Holland, 1970

\bibitem{SGA} M. Demazure and A. Grothendieck et al., \emph{S\'{e}minaire de G\'{e}om\'{e}trie Alg\`{e}brique:
Sch\'{e}mas en Groupes}, Lecture Notes in Mathematics \textbf{151},
\textbf{152}, \textbf{153}, Springer-Verlag, 1970

\bibitem{Oort} F. Oort, \emph{Commutative group schemes}, Lecture
Notes in Mathematics \textbf{15}, Springer-Verlag, 1966

\bibitem{OT} F. Oort and J. Tate, \emph{Group schemes of prime
order}, Annales scientifiques de l'\'{E}.N.S. $4^{e}$ s\'{e}rie,
tome 3, $n^{\circ} 1$, pp. 1-21, 1970

\bibitem{Pink1} R. Pink, \emph{Multilinear theory of commutative group schemes}, in
preparation

\bibitem{Pink2} R. Pink, \emph{Finite group schems and $p$-divisable
groups}, Lecture notes in WS 2004/05 at ETH Z\"{u}rich, available on
the website
\href{http://www.math.ethz.ch/~pink/FiniteGroupSchemes.html}{
http://www.math.ethz.ch/$\sim$pink/FiniteGroupSchemes.html}

\bibitem{Waterhouse} W. C. Waterhouse, \emph{Introduction to Affine Group
schemes}, Springer-Verlag, 1979

\end{thebibliography}
\end{document}